\newcommand{\BlackBox}{\rule{1.5ex}{1.5ex}}  
    \renewenvironment{proof}{\par\noindent{\bf Proof\ }}{\hfill\BlackBox\\[2mm]}
    \newenvironment{proof}{\par\noindent{\bf Proof\ }}{\hfill\BlackBox\\[2mm]}
\theoremstyle{plain}
\newtheorem{theorem}{Theorem}
\newtheorem{lemma}[theorem]{Lemma}
\newtheorem{corollary}[theorem]{Corollary}
\newtheorem{assumption}[theorem]{Assumption}
\newcommand{\e}{\varepsilon}
\newcommand{\Safe}{\mathcal S}
\newcommand{\la}{\langle}
\newcommand{\ra}{\rangle}
\newcommand{\E}{\mathbb E}
\newcommand{\R}{\mathbb R}
\newcommand{\de}{\delta}
\newcommand{\Prob}{\mathbb P}
\newcommand{\La}{\mathcal L}
\begin{document}

\title{Safe Primal-Dual Optimization with a Single Smooth Constraint}

\author{\name Ilnura Usmanova \email 
\\
       \addr SDSC hub at PSI,\\
       Paul Scherrer Institute,\\
       Villigen,  Switzerland 
       \AND
       \name Kfir Yehuda Levy \email 
       \\
       \addr Electrical and Computer Engineering,\\ Technion, \\
       Haifa, 
       Israel
       }

\editor{}

\maketitle

\begin{abstract}
This paper addresses the problem of safe optimization under a single smooth constraint,  a scenario that arises in diverse real-world applications such as robotics and autonomous navigation.
 The objective of safe optimization is to solve a black-box minimization problem while strictly adhering to a safety constraint throughout the learning process. Existing methods often suffer from high sample complexity due to their noise sensitivity or poor scalability with number of dimensions, limiting their applicability. We propose a novel primal-dual optimization method that, by carefully adjusting dual step-sizes and constraining primal updates, ensures the safety of both primal and dual sequences throughout the optimization. Our algorithm achieves a convergence rate that significantly surpasses current state-of-the-art techniques. Furthermore, to the best of our knowledge, it is the first primal-dual approach to guarantee safe updates. Simulations corroborate our theoretical findings, demonstrating the practical benefits of our method. We also show how the method can be extended to multiple constraints.

 
\end{abstract}

\begin{keywords}
  safe leaning, black-box optimization, primal-dual method
\end{keywords}

\section{Introduction}\label{section:introduction}

\paragraph{Motivation} The safe learning problem is becoming more relevant nowadays with growth of automatization, usage of reinforcement learning, and learning from interactions. In such problems, it is crucial not to violate safety constraints during the learning, even if they are not known in advance. 
As an example, imagine the task of real-time parameter tuning in manufacturing or control settings \citep{koenig2023safe, DOGRU2022107760}. The goal is to minimize production costs or maximize performance without compromising safety constraints during the real-time world interactions. 
Moreover, one might only have black-box, noisy measurements of safety and cost, and their estimated gradients, without access to their precise analytical expressions or models.  \textcolor{black}{Then, one aims to iteratively update the parameters and simultaneously take measurements, aiming to find parameters with better cost. 
It is crucial to ensure the constraints not being violated during the learning process. 
In many cases, the safety constraint is a single constraint, 
e.g., when safety must be ensured by 
limiting the probability of hitting the obstacles in robotics and reinforcement learning \citep{ray2019benchmarking}, by limiting the velocity error  in automatic controller tuning \citep{konig2023safe},  or ensuring a lower bound of the pulse energy in the automatic tuning of the Free Electron Laser \citep{kirschner2019bayesian}
.}
Such problem 
can be formulated as a \emph{safe learning} problem. Its goal is to minimise an objective $\min_x f(x)$ subject to a safety constraint $g(x)\leq 0$, given black-box information, and all queries need to be feasible during the optimization process. For such problems,  Bayesian Optimization (BO) methods provide a powerful tool for a black-box learning by construction of models based on Gaussian Processes. However, for many applications,
the parameter space might be high-dimensional, what makes BO methods  intractable due to their poor scalability with the problem dimension. In such scenarios, zeroth- and first-order methods are \textcolor{black}{preferable.} 

\paragraph{Existing work} 
There are several lines of work addressing the safe learning problem. One very powerful approach to solve global non-convex constrained optimization is Bayesian Optimization (BO). SafeOpt \citep{berkenkamp2016bayesian, sui2015safe} is the first method allowing to solve the safe learning problem. Main drawback of BO based approaches is their bad scaling with number of observations and with dimensionality. Typically, they are not applicable to dimensions higher than $d=15$ and number of samples higher than $N = 200$; since every update is extremely computationally expensive in $d$ and $N$. There are more recent approaches extending BO to higher dimensions and number of samples, such as LineBO \citep{kirschner2019adaptive, DeBlasi2020SASBOSS, Han2020HighDimensionalBO}, although they are still model based and have their limitations. In particular, LineBO is especially designed  for unconstrained problems, whereas for constrained problems it may get stuck on a suboptimal solution on the boundary. Moreover, the performance of all BO approaches depends strongly on the right choice of hyper-parameters such as a suitable kernel function, which in itself might be challenging.

Given the limitations of BO, the importance 
of first-order methods like Stochastic Gradient Descent (SGD) in machine learning becomes evident. These methods are central due to their efficiency in handling large datasets and complex models, motivating the exploration of safe first-order methods where safety constraints are paramount.
Indeed, there are several works  that explore safe learning using first-order optimization techniques. For the special case of uncertain \emph{linear constraints}, methods based on Frank-Wolfe approach were proposed in \citet{usmanova2019safe, fereydounian2020safe}, and
achieving an almost optimal rate of $\mathcal N = \tilde O(\frac{1}{\e^2})$ for convex problems,
where $\e$ is the accuracy, and $\mathcal N$ is the sample complexity. 
For \textcolor{black}{a general case} of non-linear constraints, a Log Barriers based approach was proposed in \citet{usmanova2020safe}. While being simple and general; the latter approach is unfortunately sample-inefficient 
and sensitive to noise. 
In particular, for strongly-convex, convex, and non-convex problems it reaches the rates $\mathcal N = \tilde O(\frac{1}{\e^4})$, $\mathcal N = \tilde O(\frac{1}{\e^6})$, $\mathcal N = \tilde O(\frac{1}{\e^7})$ respectively, which are 
substantially
worse than known optimal rates for non-safe problems. 

There is also a recent paper proposing the trust region technique to safe learning \citep{guo2023safe}. Such approach however has worse computational complexity, since it requires to solve quadratic approximations formulated as QCQP subproblems at every step.  Moreover, the algorithm is designed for the exact measurements, and the paper does not provide analysis for the case of stochastic measurements. 
There is also a line of work addressing \emph{online} learning  with hard constraints \citep{Yu_JMLR:v21:16-494, Guo_NEURIPS2022_ec360cb7}. However, these works guarantee in the best case $O(1)$ constraints violations, but not zero violation. More importantly, these methods require to know the analytic expression of constraint $g(x)$, 
whereas we assume that $g(x)$ is unknown and can be only accessed by a noisy oracle.


\textcolor{black}{
\paragraph{Research motivation} 
\textcolor{black}{In fully black-box optimization setting, the standard approaches of dealing with constraints, e.g., \emph{projections},} are not applicable, since the constraints are unknown in advance. 
One way to deal with this issue is replacing the original constrained problem with an unconstrained approximate, and solve it with classical unconstrained methods \textcolor{black}{like} SGD. \textcolor{black}{The current state-of-the-art approach }
 LB-SGD \citep{usmanova2020safe} 
 approximates the original problem with its log barrier surrogate. 
Its values and gradients grow to infinity close to the boundary, \textcolor{black}{automatically pushing the iterates away from the boundary, thus, ensuring safety. } 
However, in the presence of noise, the noise in the log barrier gradient estimators also amplifies close to the boundary. 
In order to guarantee convergence and safety of the updates near the boundary, the log barrier approach requires $O(\frac{1}{\e^4})$ measurements per iteration, which leads to sample-inefficiency}.
\textcolor{black}{
Then, the question raises: \emph{Can we use an alternative approach, 
e.g., utilize the Lagrangian function of the original problem as an unconstrained surrogate, while still guaranteeing safety?
} 
Appealingly, 
the Lagrangian gradients
are much more robust to noise compared to log barrier gradients. }
\textcolor{black}{\paragraph{Challenge in safety of dual approaches.}  
Lagrangian duality allows to reformulate the original problem 
$\min f(x)$ s.t. $g(x) \leq 0$ 
as a min max problem: $\min_x\max_{\lambda \geq 0} \mathcal L(x,\lambda) $ of the  Lagrangian $\mathcal L(x,\lambda) := f(x) + \lambda^T g(x)$, with dual vector $\lambda \in \R^m$  corresponding to $m$ inequality constraints. 
Primal-dual approaches replace the primal problem  $\min_x \max_{\lambda\geq 0} \mathcal L(x,\lambda) $ by its dual problem over $\lambda$s : $\max_{\lambda \geq 0} \min_x \mathcal L(x,\lambda) = \max_{\lambda \geq 0}\mathcal L(x_{\lambda},\lambda),$ where the corresponding to $\lambda$ primal variable $x_{\lambda}$ is a solution of an unconstrained problem $\arg\min_{x}\mathcal L(x,\lambda)$. 
 The updates are done in both primal and dual spaces $(x, \lambda)$. 
 }

\textcolor{black}{
Unfortunately, general primal-dual approaches do not guarantee feasible primal updates, and therefore fail to ensure safety. 
The dual feasibility set  $\lambda \geq 0$ does not correspond to the primal feasibility set. 
When updating the dual variable $\lambda$, the corresponding primal variable $x_{\lambda}$ can be \textcolor{black}{either} feasible or not, depending on the value of $\lambda$. 
E.g., for $\lambda = 0$, which is still a feasible dual variable, the corresponding $x_{\lambda}$ is a minimizer of an unconstrained problem $x_{\lambda} = \arg\min_x f(x)$, which is most likely infeasible in the primal space. }
\textcolor{black}{
The question is: \emph{Can we enforce the dual steps to stay within a primal feasibility region?} In this paper, we show that the answer is \emph{Yes}: 
in the case of a single smooth constraint, the primal feasibility set in the dual space takes a simple shape:  $\{\lambda\in \R_+:\lambda \geq \lambda^*\}$. 
\textcolor{black}{Then, we introduce two additional mechanisms: 
\textbf{(i)}
to ensure the dual updates $\{\lambda_t\}_t$ always reside in this set,  and 
\textbf{(ii)}
to ensure \emph{safe transit} for consecutive corresponding primal variables, i.e.,
from $x_{\lambda_t}$ to $x_{\lambda_{t+1}}$. }
}

\paragraph{Our contribution}
In this work, for the special case of a single smooth safety constraint, 
\textcolor{black}{we propose a new approach for safe learning using duality. By iteratively updating dual variables and solving primal subproblems, our method ensures safety for all primal and dual iterates. 
In particular, by starting from a large enough dual variable $\lambda_0$, and restricting the dual step-size, it ensures the primal safety for every dual update $\lambda_{t+1}$, i.e., that the corresponding primal Lagrangian minimizer $x_{\lambda_{t+1}}$ is feasible. \textcolor{black}{We can find such a safe dual step-size when the dual function is smooth, that holds for a strongly-convex primal problem. } 
Moreover, we ensure that $x_{\lambda_{t+1}}$ lies in a safety set $\mathcal S(x_t)$ of a previous primal variable  $x_{\lambda_{t}}$, forming a chain of safe dual updates. See \Cref{fig:1} for the illustration. 
Therefore, by restricting the primal steps to stay within this safety region, it guarantees safety for all the primal updates, converging from  $x_{\lambda_{t}}$ to $x_{\lambda_{t+1}}$.  \textcolor{black}{We also extend this approach to non-convex case, by solving a sequence of regularized strongly-convex problems.  }}

\begin{figure}
    \centering    \includegraphics[width=0.6\linewidth]{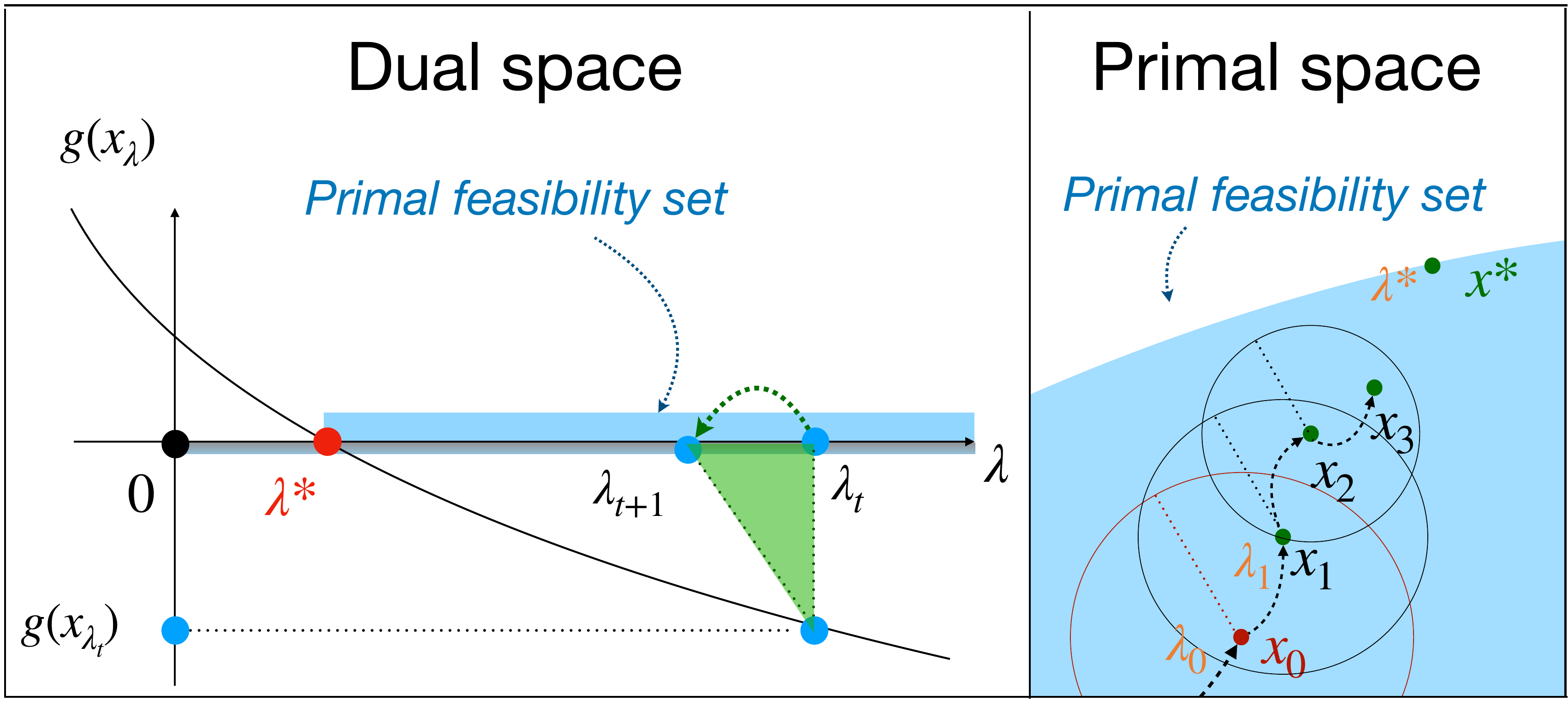}
    \caption{Primal feasibility set in primal and dual spaces,
    for  problem $\min_x f(x), \text{ s.t. }g(x)\leq 0$. 
    For primal feasibility of dual updates, we restrict the dual step sizes. For feasibility of the primal updates, we ensure the next dual update $x_{t+1}$ lies within a safety set of $x_t$ (circle).
    Here, we denote $x_{\lambda_t}$ as $x_t$ for simplicity.}
    \label{fig:1}
\end{figure}
As shown in Table~\ref{tab:comparison}, our method enjoys a \emph{substantially better sample complexity} compared to current state-of-the-art. Concretely, in the case of a single smooth safety constraint, we achieve a sample complexity as $\mathcal N = \tilde O(\frac{1}{\e^2})$ for strongly-convex problem with stochastic gradient feedback, $\mathcal N = \tilde O(\frac{1}{\e^4})$ for convex problems, and $\mathcal N = \tilde O(\frac{1}{\e^6})$ for non-convex problems, where under $\tilde O(\cdot)$, we hide a multiplicative logarithmic factor. 
Despite the main setting of a single constraint being limiting, we can extend our approach to multiple constraints by taking a maximum over constraints and applying any smoothing procedure to the resulting constraint. Such an approach leads to a worse sample complexity by a factor of $\frac{1}{\e}$; however, in convex and strongly convex cases, it is still strictly better than the baseline. We provide a more detailed discussion in Section \ref{section:extension}.  The corresponding rates can be found in Table~\ref{tab:comparison}. 

\emph{An advantage} of our approach is that it is \emph{generic}: it allows for use of \emph{any constrained optimization solver}  $\mathcal A$ on user's choice for given type of feedback, for the internal primal steps to find $x_{t+1}$ given the fixed $\lambda_{t+1}$. 
\emph{Another limitation} is that for convex case it is a two-cycle method (three-cycle for non-convex problems), which makes it harder to extend for online problems. Despite that, 
we believe our method has its own theoretical value, as it  
is conceptually new;  feasibility during the learning was never guaranteed before for primal-dual methods.


\begin{table}
\caption{ Sample complexity of first-order safe smooth optimization algorithms:  SafeOpt \citep{berkenkamp2016bayesian}, LB-SGD \citep{usmanova2023log}, and SafePD (this work).     
    Here $\e$ is the target accuracy, 
    and $d$ is the dimensionality. NLP stands for Non-Linear Programming. 
    In  SafeOpt, $\gamma(d)$ depends on the kernel, and might be exponential in $d$. 
    The last column shows the complexity of a single iterate, for SafeOpt it requires solving a non-linear optimization sub-problem. SafePD$^*$ is the extension to multiple constraints.}
    \label{tab:comparison}
\begin{center}
\begin{tabular}
{llllll}
\textbf{Method} 
&  \textbf{Str-cvx}  
& \textbf{Cvx} 
& \textbf{Non-cvx} 
&\textbf{Step complexity} 
\\
\hline \\
SafeOpt 
         & -
         & -
         & $O\left(\frac{\gamma(d)}{\e^2}\right)$ &  NLP subproblem \\ 
 \hline \\        
LB-SGD
         & $\tilde O\left(\frac{1}{\e^5}\right)$ &
         $\tilde O\left(\frac{1}{\e^6}\right)$
         & $ \tilde O\left(\frac{1}{\e^7}\right)$ & gradient step\\
\hline \\
 \bf SafePD (1 constraint) 
         & $\tilde O\left(\frac{1 
         }{\e^2}\right)$ &  $\tilde O\left(\frac{1
         }{\e^4}\right)$ & $ \tilde O\left(\frac{1
         }{\e^6}\right)$ & gradient step\\
         \hline
         \\
\bf SafePD$^*$
         & $\tilde O\left(\frac{1
         }{\e^3}\right)$ &  $\tilde O\left(\frac{1
         }{\e^5}\right)$ & $ \tilde O\left(\frac{1
         }{\e^7}\right)$ & gradient step\\
         \hline
\end{tabular}
\end{center}
\end{table}

\section{Problem Formulation
}\label{section:problem}

In this paper, we consider a \textit{safe learning} problem:
    \begin{align}\label{problem}
        \min_{x\in \R^d}~ & f(x) \tag{$\mathcal P$}\\
        \text{s.t. } & g(x) \leq 0, \nonumber
    \end{align}
\looseness -1 where the objective $f: \R^d \rightarrow \R$ and the constraint $g: \R^d \rightarrow \R$ are unknown smooth possibly non-convex functions, \textcolor{black}{and only can be accessed via a black-box oracle.}  We denote by $\mathcal X$ the feasible set $\mathcal X:= \{x\in \R^d: g(x)\leq  0\}.$  Crucially, we require that all iterates and query points are  feasible, i.e., $x_t \in \mathcal X$. 
Importantly, since $g(\cdot)$ and, therefore, the set $\mathcal X$ are unknown, we need to  ensure \emph{safety while exploring and learning the constrained optimizer.} \textcolor{black}{We define the corresponding Lagrangian function by $\mathcal L(x,\lambda) := f(x) + \lambda g(x)$.
   }
\paragraph{Feedback}
We assume $f(\cdot)$ and $g(\cdot)$ can be accessed via a noisy first-order oracle $\mathcal O(\cdot)$ which for any query $x\in\mathcal X$ returns a pair of stochastic measurements of $f$ and $g$ and their gradients respectively
\textcolor{black}{$(F(x), \nabla F(x))
$ and $(G(x), \nabla G(x))
$. We assume that these measurements are unbiased and  $\sigma$- and $\hat\sigma$ sub-Gaussian respectively, that is, $\forall x\in \mathcal X$ \begin{align*}
& \E\|F(x) - f(x)\|^2\leq \sigma^2, ~~~~~~~ \E\|G(x) - g(x)\|^2\leq \sigma^2,\\
& \E\|\nabla F(x) - \nabla f(x)\|^2\leq \hat \sigma^2,~  \E\|\nabla G(x) - \nabla g(x)\|^2\leq \hat \sigma^2,
\end{align*} where \textcolor{black}{by $\|\cdot\|$ we denote $\ell_2$-norm on $\R^d$.}}  We call a random variable $\xi$ zero-mean $\sigma$-sub-Gaussian if  
    $   
        \E\left[ e^{\omega \xi}\right] \leq \text{exp}\left(\frac{\omega^2\sigma^2}{2}\right) ~ \forall \omega \in \R. 
    $ 
It can also be shown using Taylor expansion that
$\mathbb E\|\xi\|^2 \leq \sigma^2$. We also assume that when we query the oracle  multiple times, even at the same $x\in \mathcal X$, the resulting randomizations are independent of each other. 





\paragraph{Assumptions} 
A function $f: \R^d\rightarrow \R$ is called \textit{$L$-Lipschitz continuous} if
    $|f(x) - f(y)|\leq L\|x - y\|.$
It is called \textit{$M$-smooth} if 
    $f(x) \leq f(y) + \la \nabla f(y), x-y\ra + \frac{M}{2}\|x-y\|^2.$ It is called \textit{$\mu$-strongly-convex} if 
    $f(x) \geq f(y) + \la \nabla f(y), x-y\ra + \frac{\mu}{2}\|x-y\|^2.$ 
Throughout the paper we assume:
\begin{assumption}\label{assumption:smoothness} $ f(x), g(x)$ are both $M_f$- and $M_g$-smooth respectively, $g(x)$ is also $L_g$-Lipschitz continuous on $\mathcal X.$ 
\end{assumption}
\begin{assumption}\label{assumption:safe_init}
    There exists a known feasible starting point $x_0\in \mathcal X$ such that $-g(x_0)\geq \alpha >0.$ 
\end{assumption}
Without a safe point $x_0$, even the initial measurements can be unsafe. 
The above directly implies the Slater's condition in the convex case. By $\alpha>0$ we denote the lower bound on the absolute value of the constraint at $x_0$: $\alpha \leq -g(x_0).$ 
\begin{assumption}\label{def:beta} 
There exists feasible $\tilde x\in\mathcal X$ such that $-g(\tilde x)\geq \beta$ for some  known $\beta>0$. If maximum value of $-g(x)$  exists, set $\beta := \max_{x\in\R^d} \{-g(x)\}.$ Note that by definition, $\beta \geq \alpha.$ 
\end{assumption}

\paragraph{Optimality criteria and sample complexity}
    For problem (\ref{problem}), we call $ x$ an $\e$-approximate feasible solution, if $f( x) - f(x^*) \leq \e$ and $g( x) \leq 0$. We call pair $( x,  \lambda)$ an $(\e_{p},\e_c)$-approximate KKT point, if
    the following holds: 
\begin{align}
     \| \nabla_x \mathcal L(x,\lambda)\| & \leq \e_{p}, \tag{$(\e_p,\e_c)$-KKT.1}\\
    - g(x)\lambda & \leq \e_c,\tag{$(\e_p,\e_c)$-KKT.2}\\
      \lambda \geq 0, -g(x) & \geq 0,\tag{$(\e_p,\e_c)$-KKT.3}
\end{align}
where $ \e_{p}$ is the accuracy in the Lagrangian gradient norm, and $ \e_c$ is the accuracy of the approximate complementarity slackness. 
 For any algorithm $\mathcal A'$, by $\mathcal N_{\mathcal A'}
    (\e)$ we define a total sample complexity of $\mathcal A'$ to solve a given optimization problem up to accuracy $\e$, \textcolor{black}{which is a total number of oracle queries including complexity of internal algorithms. }
    

\section{Preliminaries}
\textcolor{black}{In this section, we introduce particular properties of dual function and safety sets,  crucial for our analysis. }
\paragraph{Duality}
In the convex case, the original constrained problem is equivalent to the dual problem 
    $\max_{\lambda\geq 0} \min_{x\in \R^d}\La(x,\lambda) = \max_{\lambda\geq 0} d(\lambda).$
In the above we denote by $d(\cdot)$ the dual function $d(\lambda):=\min_{x\in  \R^d}\La(x,\lambda).$ The corresponding to $\lambda$ primal variable  is denoted by $x_{\lambda}:=\arg\min_{x\in \R^d}\La(x,\lambda).$ Let $\lambda^*:=\arg\max_{\lambda\geq 0} d(\lambda)$ be the optimal dual variable. 
 Then, we can upper bound the norm of the optimal $\lambda^*$:
\begin{lemma}\label{lemma:upper_lambda}
    Let Assumption \ref{def:beta} hold for (\ref{problem}), 
    \textcolor{black}{ then $\lambda^* \leq \Lambda := \frac{\Delta_f}{\beta}$ , where $f(x)- f(x^*) \leq \Delta_f$ for all $x \in \mathcal X$. Additionally, $\lambda^* \leq \frac{f(x_0) - f(x^*) }{-g(x_0)}$ holds.}
    %
\end{lemma}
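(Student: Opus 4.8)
The plan is to exploit weak duality together with the existence of a strictly feasible point. Recall that $d(\lambda^*) = \max_{\lambda \geq 0} d(\lambda)$ and, by weak duality (or strong duality in the convex case), $d(\lambda^*) \leq f(x^*)$. On the other hand, for any fixed feasible $\tilde x$ with $-g(\tilde x) \geq \beta$ we have, directly from the definition of the dual function as an infimum,
\begin{align*}
d(\lambda^*) = \min_{x \in \R^d} \mathcal L(x, \lambda^*) \leq \mathcal L(\tilde x, \lambda^*) = f(\tilde x) + \lambda^* g(\tilde x) \leq f(\tilde x) - \lambda^* \beta.
\end{align*}

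Next I would chain these two inequalities. Combining $d(\lambda^*) \leq f(x^*)$ is not what we want directly; instead I use that $d$ is maximized at $\lambda^*$, so in particular $d(\lambda^*) \geq d(0) = \min_x f(x) = f(x^*)$ would go the wrong way. The cleaner route: since $\lambda^* \geq 0$ and $d(\lambda^*) \geq d(\lambda)$ for every $\lambda \geq 0$, and since $f(x^*) \geq d(\lambda^*)$ by weak duality, we get
\begin{align*}
f(x^*) \geq d(\lambda^*) \geq f(\tilde x) - \lambda^* \beta \quad \text{is the wrong direction};
\end{align*}
the correct manipulation is to note $d(\lambda^*) \leq f(\tilde x) - \lambda^* \beta$ and $d(\lambda^*) = f(x_{\lambda^*}) + \lambda^* g(x_{\lambda^*}) \geq f(x^*)$ only via strong duality. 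So rearranging $f(x^*) \leq d(\lambda^*) \leq f(\tilde x) - \lambda^* \beta$ yields $\lambda^* \beta \leq f(\tilde x) - f(x^*) \leq \Delta_f$, hence $\lambda^* \leq \Delta_f / \beta = \Lambda$. For the second bound, apply the same argument with $\tilde x$ replaced by the known safe point $x_0$ from Assumption \ref{assumption:safe_init}: $f(x^*) \leq d(\lambda^*) \leq f(x_0) + \lambda^* g(x_0)$, and since $-g(x_0) > 0$ we may divide to get $\lambda^* \leq (f(x_0) - f(x^*))/(-g(x_0))$.

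The one subtle point — and the main thing to be careful about — is which direction of duality is being invoked. The inequality $f(x^*) \leq d(\lambda^*)$ needs strong duality (equality $f(x^*) = d(\lambda^*)$ in the convex case under Slater, which Assumption \ref{assumption:safe_init} supplies), or at least the saddle-point characterization of $(x^*, \lambda^*)$. In the convex setting this is standard: the KKT conditions give $f(x^*) = \mathcal L(x^*, \lambda^*) = d(\lambda^*)$, using complementary slackness $\lambda^* g(x^*) = 0$ and the fact that $x^*$ minimizes $\mathcal L(\cdot, \lambda^*)$. I would state this explicitly rather than hand-wave it. Everything else is a one-line substitution into the definition of the infimum and a division by a positive quantity, so there is no real computational obstacle; the proof is essentially three inequalities stacked together.
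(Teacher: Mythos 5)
Your proof is correct and follows essentially the same route as the paper: both arguments evaluate the Lagrangian at the strictly feasible point $\tilde x$ (resp.\ $x_0$) and use the saddle-point/strong-duality identity $f(x^*) = d(\lambda^*) = \mathcal L(x^*,\lambda^*) \leq \mathcal L(\tilde x,\lambda^*)$ to conclude $\lambda^*(-g(\tilde x)) \leq f(\tilde x) - f(x^*)$. Your explicit remark that the step $f(x^*)\leq d(\lambda^*)$ requires strong duality (Slater, supplied by Assumption~\ref{assumption:safe_init}) is a point the paper's proof leaves implicit, but the substance is identical.
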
 For the proof see Appendix \ref{proof:lemma:upper_lambda}. \textcolor{black}{In the above, $\Delta_f$ is a general upper bound on $f(x) - f(x^*)\forall x \in \mathcal X.$ Note that if $\Delta_f$ is unknown, it can still be upper bounded by $\Delta_f := \max_{x\in\mathcal X} f(x) - f(x^*) := f(\hat x) - f(x^*) \leq \|\nabla f(\hat x)\| R + \frac{M_f}{2}R^2,\text{with } \hat x \in \arg\max_{x\in \mathcal X} f(x),$ where  $\|\nabla f(\hat x)
\| \leq \|\nabla f(x_0)\| + M_f R $ due to the smoothness of $f$. Here, by $R$ we denote an upper bound on the initial distance to the set of optimal solutions:  $\|x_0 - x^*\|\leq R~, ~ \forall x^* \in \arg\min_{x\in \mathcal X}f(x).$ }

Also, the dual problem 
of \ref{problem} 
with a strongly-convex objective 
$f$ is smooth:
\begin{lemma}\label{lemma:dual-smoothness}
    If $f(x)$ is $\mu_f$-strongly-convex, and $g(x)$ is convex and $L_g$-Lipschitz-continuous, then the dual function $d(\lambda)$ is $\frac{2 L_g^2}{\mu_f}$-smooth. 
\end{lemma}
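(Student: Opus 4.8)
The plan is to prove smoothness of the dual function $d(\lambda)$ by showing its gradient is Lipschitz. The first observation is Danskin-type differentiability: since $f$ is $\mu_f$-strongly-convex and $g$ is convex, the Lagrangian $\mathcal L(x,\lambda) = f(x)+\lambda g(x)$ is, for each fixed $\lambda \geq 0$, $\mu_f$-strongly-convex in $x$, so $x_\lambda = \arg\min_x \mathcal L(x,\lambda)$ is uniquely defined. Standard envelope/Danskin arguments then give $d'(\lambda) = g(x_\lambda)$. So it suffices to bound $|g(x_{\lambda_1}) - g(x_{\lambda_2})|$, and by the $L_g$-Lipschitz continuity of $g$ this reduces to bounding $\|x_{\lambda_1} - x_{\lambda_2}\|$ in terms of $|\lambda_1 - \lambda_2|$.

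Next I would establish the key stability estimate $\|x_{\lambda_1} - x_{\lambda_2}\| \leq \frac{L_g}{\mu_f}|\lambda_1 - \lambda_2|$. The cleanest route is via the first-order optimality conditions: $\nabla f(x_{\lambda_i}) + \lambda_i \nabla g(x_{\lambda_i}) = 0$ for $i=1,2$. Subtracting, and pairing with $x_{\lambda_1}-x_{\lambda_2}$, one gets
\begin{align*}
0 &= \langle \nabla f(x_{\lambda_1}) - \nabla f(x_{\lambda_2}), x_{\lambda_1} - x_{\lambda_2}\rangle + \langle \lambda_1 \nabla g(x_{\lambda_1}) - \lambda_2 \nabla g(x_{\lambda_2}), x_{\lambda_1} - x_{\lambda_2}\rangle.
\end{align*}
The first inner product is at least $\mu_f \|x_{\lambda_1}-x_{\lambda_2}\|^2$ by strong convexity of $f$. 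For the second term, I would write $\lambda_1\nabla g(x_{\lambda_1}) - \lambda_2\nabla g(x_{\lambda_2}) = \lambda_1(\nabla g(x_{\lambda_1}) - \nabla g(x_{\lambda_2})) + (\lambda_1-\lambda_2)\nabla g(x_{\lambda_2})$; the first piece contributes $\lambda_1 \langle \nabla g(x_{\lambda_1})-\nabla g(x_{\lambda_2}), x_{\lambda_1}-x_{\lambda_2}\rangle \geq 0$ by convexity of $g$ and $\lambda_1 \geq 0$, while the second piece is bounded below by $-|\lambda_1-\lambda_2|\, \|\nabla g(x_{\lambda_2})\|\,\|x_{\lambda_1}-x_{\lambda_2}\| \geq -|\lambda_1-\lambda_2| L_g \|x_{\lambda_1}-x_{\lambda_2}\|$ using $\|\nabla g\| \leq L_g$. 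Combining, $\mu_f\|x_{\lambda_1}-x_{\lambda_2}\|^2 \leq |\lambda_1-\lambda_2| L_g \|x_{\lambda_1}-x_{\lambda_2}\|$, hence $\|x_{\lambda_1}-x_{\lambda_2}\| \leq \frac{L_g}{\mu_f}|\lambda_1-\lambda_2|$.

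Finally I would chain the pieces: $|d'(\lambda_1) - d'(\lambda_2)| = |g(x_{\lambda_1}) - g(x_{\lambda_2})| \leq L_g\|x_{\lambda_1}-x_{\lambda_2}\| \leq \frac{L_g^2}{\mu_f}|\lambda_1-\lambda_2|$, which gives $\frac{L_g^2}{\mu_f}$-smoothness — actually a slightly sharper constant than the claimed $\frac{2L_g^2}{\mu_f}$, so the stated bound certainly holds (the factor of $2$ likely absorbs subtleties such as $g$ only being Lipschitz on $\mathcal X$ and one needing a careful treatment of $x_\lambda$ possibly leaving $\mathcal X$, or a looser bookkeeping of $\|\nabla g\|$). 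The main obstacle I anticipate is the rigorous justification of differentiability of $d$ and the identity $d'(\lambda) = g(x_\lambda)$ — i.e., invoking Danskin's theorem correctly given that the minimization is over all of $\R^d$ rather than a compact set (uniqueness from strong convexity plus a coercivity/level-boundedness argument handles this), and being careful that the Lipschitz bound on $\nabla g$ is only assumed on $\mathcal X$, which may require either restricting attention to $\lambda$ large enough that $x_\lambda \in \mathcal X$, or an extension argument. Everything else is routine convex-analysis manipulation.
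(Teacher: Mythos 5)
Your proposal is correct, and it takes a genuinely different route from the paper. The paper never touches the stationarity conditions: it bounds $\|x_{\lambda_1}-x_{\lambda_2}\|$ via a function-value argument, namely $\frac{\mu_f}{2}\|x_{\lambda_1}-x_{\lambda_2}\|^2 \leq \La(x_{\lambda_2},\lambda_1)-\La(x_{\lambda_1},\lambda_1)$, then rewrites the right-hand side as $\La(x_{\lambda_2},\lambda_2)-\La(x_{\lambda_1},\lambda_2)+(\lambda_1-\lambda_2)\bigl(g(x_{\lambda_2})-g(x_{\lambda_1})\bigr)$ and uses optimality of $x_{\lambda_2}$ for $\La(\cdot,\lambda_2)$ to drop the first difference, ending with $\frac{\mu_f}{2}\|x_{\lambda_1}-x_{\lambda_2}\|^2 \leq |\lambda_1-\lambda_2|\,|g(x_{\lambda_1})-g(x_{\lambda_2})|$; combining with $|g(x_{\lambda_1})-g(x_{\lambda_2})|\leq L_g\|x_{\lambda_1}-x_{\lambda_2}\|$ yields the constant $\frac{2L_g^2}{\mu_f}$. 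Your argument instead subtracts the first-order optimality conditions and uses strong monotonicity of $\nabla f$ plus monotonicity of $\nabla g$, which gives the sharper stability bound $\|x_{\lambda_1}-x_{\lambda_2}\|\leq \frac{L_g}{\mu_f}|\lambda_1-\lambda_2|$ and hence the improved constant $\frac{L_g^2}{\mu_f}$. What each buys: yours is tighter by a factor of $2$ but needs differentiability of $f$ and $g$ (available here from smoothness) and the pointwise gradient bound $\|\nabla g\|\leq L_g$; the paper's version uses only function values and the variational characterization of the minimizers, so it would survive for nonsmooth $f$ and $g$. Both arguments share the caveat you flag — $L_g$-Lipschitzness of $g$ is only assumed on $\mathcal X$, while $x_\lambda$ is a minimizer over all of $\R^d$ — and the paper does not address it either, so this is not a defect specific to your write-up.
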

For the proof see Appendix \ref{proof:lemma:dual-smoothness}. 
\citet{Yu2015OnTC} proved that under some rank conditions on the constraints Jacobian, the dual function is strongly-concave with some $\mu_d$. (See Theorem 10, \citet{Yu2015OnTC}).
For a single constraint the rank condition holds automatically. Moreover, we can prove the local strong-concavity and provide a bound on $\mu_d$ below.
\begin{lemma}
\label{lemma:local_dual_strong_concavity}
Problem (\ref{problem}) has a    \emph{$\mu_d$-locally strongly-concave} dual function with $    \mu_d = \frac{l^2}{M_f+\lambda M_g}$
for all $\lambda$ such that  $\|\nabla g(x_{\lambda})\|\geq l>0$. For convex $g$, it implies
    $\mu_d \geq  \frac{\beta^2}{4R^2(M_f+\lambda M_g)}$
for all $\lambda$ such that  $g(x_{\lambda})\geq -\beta/2$, where $\beta$ is defined in Definition \ref{def:beta}.
\end{lemma}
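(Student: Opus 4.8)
The plan is to reduce the statement to a bound on the second derivative of the dual function $d$, obtained by combining Danskin's theorem with implicit differentiation of the inner optimality condition. In the (strongly) convex setting the inner problem $\min_x \mathcal L(x,\lambda)$ has a unique minimizer $x_\lambda$, so Danskin's theorem applies and gives that $d$ is differentiable with $d'(\lambda) = \partial_\lambda \mathcal L(x_\lambda,\lambda) = g(x_\lambda)$; moreover $d$ is concave, being a pointwise minimum of functions affine in $\lambda$. Hence it suffices to show $d''(\lambda) \le -\mu_d$ at the $\lambda$ of interest, which is exactly the local $\mu_d$-strong-concavity inequality.

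\textbf{Computing and bounding $d''$.} Differentiating the stationarity condition $\nabla f(x_\lambda) + \lambda\nabla g(x_\lambda) = 0$ in $\lambda$ gives $H_\lambda x_\lambda' + \nabla g(x_\lambda) = 0$, where $H_\lambda := \nabla^2_x\mathcal L(x_\lambda,\lambda) = \nabla^2 f(x_\lambda) + \lambda\nabla^2 g(x_\lambda)$, so $x_\lambda' = -H_\lambda^{-1}\nabla g(x_\lambda)$ and therefore $d''(\lambda) = \la\nabla g(x_\lambda), x_\lambda'\ra = -\nabla g(x_\lambda)^{\top} H_\lambda^{-1}\nabla g(x_\lambda)$. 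Since $\nabla f$ is $M_f$-Lipschitz and $\nabla g$ is $M_g$-Lipschitz by \Cref{assumption:smoothness}, and $\lambda\ge 0$, we have $H_\lambda \preceq (M_f + \lambda M_g) I$; together with $H_\lambda \succ 0$ (the inner objective being strongly convex near $x_\lambda$) this yields $H_\lambda^{-1} \succeq \frac{1}{M_f + \lambda M_g} I$, hence $d''(\lambda) \le -\frac{\|\nabla g(x_\lambda)\|^2}{M_f + \lambda M_g} \le -\frac{l^2}{M_f + \lambda M_g}$ whenever $\|\nabla g(x_\lambda)\| \ge l$, which is the first claim with $\mu_d = \frac{l^2}{M_f+\lambda M_g}$. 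To avoid assuming $f,g\in C^2$ in the convex case, the same local rate follows purely at the first-order level: from the identity $\nabla_x\mathcal L(x_{\lambda_1},\lambda_2) = (\lambda_2-\lambda_1)\nabla g(x_{\lambda_1})$ (immediate from the two stationarity conditions), co-coercivity of the $(M_f+\lambda_2 M_g)$-smooth convex map $\nabla_x\mathcal L(\cdot,\lambda_2)$, and convexity of $g$, one obtains for $\lambda_1 < \lambda_2$ that $d'(\lambda_1) - d'(\lambda_2) = g(x_{\lambda_1}) - g(x_{\lambda_2}) \ge \frac{\|\nabla g(x_{\lambda_1})\|^2}{M_f+\lambda_2 M_g}(\lambda_2 - \lambda_1)$, i.e. the defining strong-concavity inequality with the same constant in the limit $\lambda_1,\lambda_2\to\lambda$.

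\textbf{Specializing to convex $g$.} It remains to verify $\|\nabla g(x_\lambda)\| \ge \frac{\beta}{2R}$ whenever $g(x_\lambda) \ge -\beta/2$, after which plugging $l = \frac{\beta}{2R}$ into the previous step gives $\mu_d \ge \frac{\beta^2}{4R^2(M_f+\lambda M_g)}$. Take $\tilde x$ as in \Cref{def:beta}, so $g(\tilde x) \le -\beta$. Convexity of $g$ at $x_\lambda$ gives $g(\tilde x) \ge g(x_\lambda) + \la\nabla g(x_\lambda), \tilde x - x_\lambda\ra$, hence $\la\nabla g(x_\lambda), \tilde x - x_\lambda\ra \le g(\tilde x) - g(x_\lambda) \le -\beta + \beta/2 = -\beta/2$; by Cauchy--Schwarz, $\|\nabla g(x_\lambda)\|\,\|\tilde x - x_\lambda\| \ge \beta/2$, and using $\|\tilde x - x_\lambda\| \le R$ (the relevant iterates and $\tilde x$ lie within a ball of radius $R$) we conclude $\|\nabla g(x_\lambda)\| \ge \frac{\beta}{2R}$.

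\textbf{Expected obstacle.} The two delicate points are (i) differentiating $\lambda\mapsto x_\lambda$ in Step 2 under only Lipschitz-gradient smoothness, which is why I would present the first-order co-coercivity variant as the rigorous route in the convex case (and note that in the non-convex/regularized case $H_\lambda$ is positive definite by construction, so the Hessian computation is valid); and (ii) the diameter estimate $\|\tilde x - x_\lambda\| \le R$ in the convex case, which relies on the fact that the algorithm's primal iterates and $\tilde x$ stay within a bounded region measured by $R$.
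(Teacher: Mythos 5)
Your main argument is essentially the paper's proof: both bound the dual Hessian $\nabla^2_\lambda d(\lambda) = -\nabla g(x_\lambda)^{\top}\bigl(\nabla^2 f(x_\lambda)+\lambda\nabla^2 g(x_\lambda)\bigr)^{-1}\nabla g(x_\lambda)$ (the paper cites Bertsekas for this identity, you rederive it by implicit differentiation), use $\nabla^2 f+\lambda\nabla^2 g \preceq (M_f+\lambda M_g)I$, and then lower-bound $\|\nabla g(x_\lambda)\|$ by $\beta/(2R)$ via convexity of $g$ at $x_\lambda$ against $\tilde x$ and the diameter bound — exactly as in Appendix B.3. One small caution on your optional first-order variant: convexity of $g$ at $x_{\lambda_1}$ gives $g(x_{\lambda_1})-g(x_{\lambda_2}) \leq \la\nabla g(x_{\lambda_1}), x_{\lambda_1}-x_{\lambda_2}\ra$, which points the wrong way to chain with the co-coercivity lower bound on that inner product; the fix is to use convexity at $x_{\lambda_2}$ together with co-coercivity of $\nabla_x\mathcal L(\cdot,\lambda_1)$, which yields $g(x_{\lambda_1})-g(x_{\lambda_2}) \geq \frac{\lambda_2-\lambda_1}{M_f+\lambda_1 M_g}\|\nabla g(x_{\lambda_2})\|^2$ and the same constant in the limit.
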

The proof is based on the definition of the Hessian of the dual problem, and can be found in Appendix \ref{proof:lemma:local_dual_strong_concavity}. The local strong concavity and smoothness of the dual problem allows to bound the convergence rate of our approach, where outer iterations are basically dual gradient ascent steps.
\paragraph{Safety set} 
Here, we introduce an notion of a \textit{safety region} at the current point $x_t$, \textcolor{black}{using Lipschitz continuity of the constraint.} 
Let $\mathbb B^d(c,r)
$ denote the Euclidean ball in $\R^d$ centered at $c$ with radius $r>0$. 
For point $x_{t}\in \mathcal X$ the \emph{safety region} $\Safe(x_t) := \mathbb B^d(x_t, \frac{-g(x_t)}{\textcolor{black}{2}L_g})$ is the ball with center at $x_t$ and radius $\frac{-g(x_t)}{\textcolor{black}{2}L_g}$, where $L_g$ is the Lipschitz continuity constant of $g(x)$ on $\mathcal X$. 
 \begin{lemma}\label{lemma:safety_set}
   Given that $x_t \in \mathcal X$ is strictly feasible, all points $x$ within the safety set $\mathcal S(x_t)$ satisfy $g(x) \leq \frac{g(x_t)}{2} <0 $, implying their strict 
   feasibility.
\end{lemma}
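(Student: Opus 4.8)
\textbf{Proof plan for Lemma~\ref{lemma:safety_set}.}
The plan is to prove this directly from the $L_g$-Lipschitz continuity of $g$ on $\mathcal X$, which is guaranteed by Assumption~\ref{assumption:smoothness}. First I would take an arbitrary point $x \in \mathcal S(x_t) = \mathbb B^d\bigl(x_t, \tfrac{-g(x_t)}{2L_g}\bigr)$, so that $\|x - x_t\| \leq \tfrac{-g(x_t)}{2L_g}$. Since $x_t$ is strictly feasible, $-g(x_t) > 0$, so the radius is well-defined and positive. The one subtlety to address is that Lipschitz continuity of $g$ is only assumed \emph{on} $\mathcal X$, so a priori we may only apply it for points already known to be feasible; I would handle this either by noting the bound is assumed to hold in a neighborhood of $\mathcal X$, or by a short continuity/connectedness argument along the segment from $x_t$ to $x$ (the segment stays in a region where the Lipschitz bound is valid, and one shows $g$ cannot reach $0$ before reaching $x$). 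This is the only place where any care is needed; everything else is a one-line estimate.

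Given the Lipschitz bound, I would write
\[
g(x) \;\leq\; g(x_t) + L_g\|x - x_t\| \;\leq\; g(x_t) + L_g \cdot \frac{-g(x_t)}{2L_g} \;=\; g(x_t) - \frac{g(x_t)}{2} \;=\; \frac{g(x_t)}{2}.
\]
Since $g(x_t) < 0$ by strict feasibility of $x_t$, this gives $g(x) \leq \tfrac{g(x_t)}{2} < 0$, which is exactly the claimed inequality and immediately implies $x \in \mathcal X$ with strict feasibility. As $x$ was arbitrary in $\mathcal S(x_t)$, the conclusion holds for all such $x$.

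The main (and essentially only) obstacle is the domain-of-validity issue for the Lipschitz constant mentioned above; the core computation is trivial. I expect the author's proof to either gloss over this point or dispatch it with the segment argument, since in the convex setting (or under a global Lipschitz assumption on $g$) it is immediate.
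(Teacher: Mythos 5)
Your proof is correct and takes exactly the same route as the paper: apply the $L_g$-Lipschitz bound to get $g(x) \leq g(x_t) + L_g\|x - x_t\| \leq g(x_t)/2 < 0$. The domain-of-validity subtlety you flag is indeed glossed over in the paper's one-line proof, and your remark on how to patch it is a minor but valid refinement rather than a different argument.
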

\begin{proof}
    Indeed, from Lipschitz continuity we have for all $x\in\mathcal S(x_t)$ : $g(x) - g(x_t) \leq L_g\|x - x_t\|$, what implies 
        $g(x) \leq   g(x_t) + L_g\|x - x_t\| \leq g(x_t) + L_g\frac{-g(x_t)}{\textcolor{black}{2}L_g}\leq \frac{g(x_t)}{2}.$
\end{proof}
\textcolor{black}{Often, exact measurements of $g(x_t)$ are unknown, in this case we can use the upper bound $\hat g(x_t)$ with confidence $1-\delta$ and define the corresponding \emph{approximate safety set} by $\hat\Safe(x_t) := \mathbb B^d(x_t, \frac{-\hat g(x_t)}{L_g})$, that guarantees feasibility of $x\in  \hat\Safe(x_t)$ with probability $1-\de.$
}

\section{Strongly-Convex Problem}
As the main building block of our approach, we propose a primal-dual safe method addressing strongly-convex and smooth optimization problem. Strongly-convex problem in our paper satisfies the assumption below: 
\begin{assumption}\label{assumption:strong-conv}
    The objective $f(x)$ is $\mu_f$-strongly-convex and $M_f$-smooth for some $M_f\geq\mu_f>0$. 
    Also, the constraint function $g(x)$ is convex and $M_g$-smooth. 
\end{assumption}
    
 Then, the main idea of our approach can be described as follows. 
\emph{First,} we find an initial primal-dual pair $(\check \lambda, \check x)$, such that $\check x\approx \min_{x\in\R^d}\mathcal L(x,\check \lambda)$ is strictly feasible. 
By taking $\check \lambda$ big enough, we guarantee that any descent method \textcolor{black}{$\check{\mathcal A}$} applied to $\arg\min_{x\in\R^d}\mathcal L( x,\check \lambda)$ starting from $x_0\in Int(\mathcal X)$ 
does also imply feasibility of its  iterates, including feasibility of $\check x.$ \textcolor{black}{We set $(x_1,\lambda_1) = (\check x,\check \lambda)$.} 
\emph{Next,} we iteratively update pair $(\lambda_t, x_t)$ as follows: 

1.  We iteratively decrease the dual variable \textit{(dual iteration)} $\lambda_{t+1} \leftarrow \max\{\lambda_{t} - \frac{-\mu g(x_t)}{\textcolor{black}{8}L_g^2},0\}$. This update corresponds to the dual gradient ascent with the step-size $\gamma = \frac{\mu }{\textcolor{black}{8}L_g^2},$ which guarantees that the corresponding primal solution $ x_{\lambda_{t+1}}:=\arg\min_{x\in\R^d} \La(x,\lambda_{t+1})$ 
    lies in the \emph{approximate safety region} $\hat \Safe(x_t)$ around the last update $x_t$. 
    
2. We solve the primal problem \emph{(primal iteration)} of the corresponding dual variable $x_{t+1} \leftarrow \arg\min_{x\in\hat{\mathcal S}(x_{t})} \mathcal L(\cdot, \lambda_{t+1})$ constrained to the safety region, up to accuracy \textcolor{black}{$\eta_t>0$}, i.e., $\mathcal L(x_{t+1},\lambda_{t+1}) - \mathcal L(x_{\lambda_{t+1}},\lambda_{t+1})\leq \eta_t.$  This can be done safely using any projection descent method, or any approach that preserves feasibility subject to a ball constraint $\hat{\mathcal S}(x_{t})$. 

This way, 
we guarantee a continuous sequence of primal subproblems such that the path between the new $x_{t+1}$ and the past $x_t$ lies fully in the safety set of $x_t$. 
Our approach for the Strongly-Convex case is depicted in \Cref{alg:base}. \textcolor{black}{We set the required primal accuracy to be 
    $\eta_{t} := \begin{cases}\frac{\mu_f \hat g(x_{t})^2}{128 L_g^2},
    &\text{ if } -\hat g(x_{t})\lambda_{t+1} > \e_c\\ 
    \min\{\frac{\mu_f}{M_L}\e_p^2, \frac{\e}{2}\}, &\text{ if } -\hat g(x_{t})\lambda_{t+1}\leq \e_c\end{cases}$ and prove it is enough in the following sections. The second case corresponds to the accuracy of the last primal step $\eta_T.$}
\textcolor{black}{
    In the above, by $\eta_t$ we denote accuracy of solving the primal subproblem at step $t$.
    By $\check \eta$ we denote accuracy of solving the initial primal subproblem.
    }
\begin{algorithm}[h]
\caption{Strongly-Convex Safe Algorithm (SCSA)}\label{alg:base}
\begin{algorithmic}
\item[\emph{Initialization:}]  $x_0: -g( x_0) \geq \alpha > 0,$ $\beta>0$,  $L_g,$ $\mu_f$, 
 $M_f, M_g >0,$ $\Delta_f>0,$ $\{\epsilon_t\}_{t=0}^T$, $\{\eta_t\}_{t=0}^T$. Set $\check \lambda \leftarrow \frac{\Delta_f}{\alpha}.$
    \item[0.]\emph{ Preliminary step:}
    $\check x \leftarrow \check{\mathcal A}
    (\mathcal L(\cdot, \check \lambda),\R^d,  x_{0})$ (up to acc. $\check \eta \leq \frac{\mu_f \alpha^2}{8L_g^2}$), and set $(x_1, \lambda_1) \leftarrow (\check x, \check \lambda)$
    \FOR{$t\in\{1,\ldots,T\}$ }
    \item[1.] \emph{ Estimate $\epsilon_t$-approximate upper bound on $g(x_{t})$  using a minibatch with $n_t := \frac{4\sigma^2}{\epsilon_t^2}\ln\frac{T}{\de}$ samples:}\\
    \textcolor{black}{$$\hat g(x_{t}):=\frac{\sum_{i=1}^{n_t} G_i(x_t)}{n_t} + \frac{\sigma}{\sqrt{n_t}}\sqrt{\ln\frac{T}{\de}}$$
   } 
    \item[2.] \emph{ Safety set:} $\hat \Safe(x_{t}) \leftarrow \mathbb B^d(x_{t}, \frac{-\hat g(x_{t})}{L_g})$
    \item[3.] \emph{ Dual update:} $\lambda_{t+1} \leftarrow \max\{\lambda_{t} + \gamma\hat \nabla d(\lambda_{t}),0\}$, 
    where $\gamma = \frac{\mu_f}{\textcolor{black}{8}L_g^2}$ and $\hat \nabla d(\lambda_t) = \hat g(x_{t})$ 
    \item[4.]
    \emph{ Primal update:} $x_{t+1} \leftarrow \mathcal A
    (\mathcal L(\cdot, \lambda_{t+1}),\hat{\mathcal S}(x_{t}), x_{t})$, up to accuracy $\eta_{t}$
    \item[5.] \textbf{If} $-\hat g(x_{t})\lambda_{t+1} \leq \e_c$  \textbf{break}
    \ENDFOR
    \item[\emph{Output:}]  $(x_T,\lambda_T)$
\end{algorithmic}
\end{algorithm}
\vspace{-2mm}
\paragraph{Internal algorithms} 
     By algorithm $\mathcal A(f,\mathcal C, x_0)$ we denote an optimization procedure with known simple constraint set $\mathcal C$, for minimizing $\min f(x)$ s.t. $x\in \mathcal C$, where $f$ is $M$-smooth and $\mu$-strongly-convex, the procedure is starting from $x_0\in \mathcal C$ and all the updates are feasible $x_t\in \mathcal C$. For internal  algorithm $\mathcal A$ we can use any algorithm \emph{ensuring feasible iterations} with known feasibility set $\hat {\mathcal S}(x_{t})$,  e.g., the projected gradient descent: $
\mathcal A_{ PGD}(\mathcal L(\cdot,  \lambda_{t+1}),\hat{\mathcal S}(x_{t}), x_{t})$ or  projected stochastic gradient descent $\mathcal A_{SGD} (\mathcal L(\cdot,  \lambda_{t+1}),\hat{\mathcal S}(x_{t}), x_{t}).$ Adam or other momentum-based algorithms can be used too.  By $\check{\mathcal A}$ we denote an optimization method 
ensuring \emph{descent} at every step.

\paragraph{Estimating a constraint value.} \textcolor{black}{Note that in order to estimate the safety set at point $x$, we need to estimate the constraint value $g(x)$. The accuracy of this estimation is limited by the noise level of measurements. We can decrease this error by taking a minibatch of several noisy samples of $g(x)$ at the required point $x$. Let us denote by $ \epsilon_t$ the required approximation accuracy of the constraint value estimation at iteration $t$  with high probability $\Prob\{\forall t \leq T: g(x_t) - \hat g(x_t) \leq \epsilon_t\}\geq 1-\de.$ As we later show in Theorem \ref{theorem:str_cvx_conv}, estimating constraint values $g(x_t)$ up to accuracy $\epsilon_t = O(\e)$ is enough for safe convergence to $\e$-approximate solution.  } 
In \emph{Step 1.}, we approximate $g(x_t)$ by using a minibatch of $n_t$
samples.
Note that from standard concentration inequalities for sub-Gaussian random variables, we have  
$\Prob\left\{\left|\frac{ \sum_{i=1}^{n_t} G_i(x_t)}{n_t}-g(x_t)\right|\leq \frac{\sigma}{\sqrt{n_t}}\sqrt{\ln\frac{1}{\de}}\right\}\geq 1-\de$. 
Therefore, by taking $n_t \geq \frac{\sigma^2}{\epsilon_t^2}\ln\frac{T}{\de}$ ensures that  $\Prob\{\forall t \leq T: g(x_t) - \hat g(x_t) \leq \epsilon_t\}\geq 1-\de.$  
 
    In the next three subsections, we provide the theoretical justification of our procedure. In particular, we provide the safety and convergence guarantees, and bound the sample complexity.


\subsection{Safety}
We split this section into two parts, describing separately safety of finding the initial pair $(\check x, \check \lambda)$ and safety of the transition $(x_t, \lambda_t) \rightarrow (x_{t+1}, \lambda_{t+1}).$

\paragraph{Safe dual initialization} First, we prove the safety of obtaining $(\check x, \check\lambda)$ in a safe way starting from $x_0$. For that, one can set  $\check\lambda \geq  \frac{\Delta_f}{\beta}$, and use any \emph{descent} method to minimize $\min_{x\in\R^d}\mathcal L(x,\check\lambda).$ 

\begin{lemma}\label{lemma:safe_init}
Let $\check \lambda = \frac{\Delta_f}{\alpha} \geq \frac{f(x_0) - \min_x f(x)}{-g(x_0)}.$
Then, all iterates $x_t$ of any descent method for $\mathcal L(x,\lambda)$ starting at $ x_0$ would guarantee feasibility of the iterates $g(x_t) \leq 0.$
\end{lemma}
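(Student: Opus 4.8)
The claim is that if $\check\lambda = \Delta_f/\alpha \geq (f(x_0) - \min_x f(x))/(-g(x_0))$, then every iterate of a descent method applied to $\mathcal L(\cdot,\check\lambda)$ starting from $x_0$ stays feasible. My plan is to argue by a sublevel-set / monotonicity argument. A descent method guarantees $\mathcal L(x_t,\check\lambda) \leq \mathcal L(x_0,\check\lambda)$ for all $t$, so it suffices to show that the sublevel set $\{x : \mathcal L(x,\check\lambda) \leq \mathcal L(x_0,\check\lambda)\}$ is contained in $\mathcal X = \{x : g(x)\leq 0\}$. So the whole proof reduces to: for any $x$ with $g(x) > 0$, show $\mathcal L(x,\check\lambda) > \mathcal L(x_0,\check\lambda)$.

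\textbf{Key steps.} First, I would write $\mathcal L(x,\check\lambda) - \mathcal L(x_0,\check\lambda) = \bigl(f(x) - f(x_0)\bigr) + \check\lambda\bigl(g(x) - g(x_0)\bigr)$. Using $f(x) \geq \min_x f(x)$ I get $f(x) - f(x_0) \geq -(f(x_0) - \min_x f(x)) \geq -\Delta_f$ (here I use $f(x_0) - \min f \le \Delta_f$, which is how $\Delta_f$ is defined in the excerpt). Second, suppose $g(x) > 0$; then since $-g(x_0) \geq \alpha$, we have $g(x) - g(x_0) > -g(x_0) \geq \alpha$, hence $\check\lambda(g(x) - g(x_0)) > \check\lambda\,\alpha = \Delta_f$ by the choice $\check\lambda = \Delta_f/\alpha$. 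Combining the two bounds gives $\mathcal L(x,\check\lambda) - \mathcal L(x_0,\check\lambda) > -\Delta_f + \Delta_f = 0$, so any point outside $\mathcal X$ strictly increases the Lagrangian relative to its value at $x_0$. Third, I would invoke the descent property $\mathcal L(x_t,\check\lambda)\leq \mathcal L(x_0,\check\lambda)$ (the defining property of a descent method, and $x_0$ is the initialization), which forces $x_t$ into the sublevel set and therefore $g(x_t)\leq 0$ for all $t$. One should also note the boundary case $g(x)=0$ is already feasible, so the contrapositive above ("outside $\mathcal X$" meaning $g(x)>0$) is exactly what is needed.

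\textbf{Main obstacle.} The argument itself is short; the subtle points are bookkeeping rather than depth. I expect the main thing to be careful about is the precise meaning of "descent method": we only need $\mathcal L(x_t,\check\lambda) \leq \mathcal L(x_{t-1},\check\lambda) \leq \dots \leq \mathcal L(x_0,\check\lambda)$, i.e. monotone non-increase of the objective along iterates, which is exactly what the paper means by $\check{\mathcal A}$ "ensuring descent at every step." A second point is reconciling the two equivalent hypotheses on $\check\lambda$: the bound uses $\Delta_f \geq f(x_0) - \min_x f(x)$ and $\alpha \leq -g(x_0)$, so $\Delta_f/\alpha \geq (f(x_0)-\min f)/(-g(x_0))$ automatically — I would just use whichever form is cleanest, namely $\check\lambda\,\alpha \geq \Delta_f \geq f(x_0) - \min_x f(x)$. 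No smoothness or convexity is needed here, only that $x_0$ is the start of the descent and strictly feasible; this matches Assumption~\ref{assumption:safe_init}.
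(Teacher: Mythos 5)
Your proof is correct and is essentially the paper's argument: both rest on the descent inequality $\mathcal L(x_t,\check\lambda)\le\mathcal L(x_0,\check\lambda)$, rearranged using $f(x_0)-f(x_t)\le f(x_0)-\min_x f(x)\le\Delta_f$ and $-g(x_0)\ge\alpha$ to force $g(x_t)\le 0$. Your contrapositive, sublevel-set phrasing is only a cosmetic repackaging of the paper's direct rearrangement of the same inequality.
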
 For the proof see Appendix \ref{proof:lemma:safe_init}. 
Specifically, we can use SGD with an appropriate minibatch size to estimate the gradients well enough, to guarantee descent at every step with high probability in the following \Cref{lemma:init_sgd}. For its proof see Appendix \ref{proof:lemma:init_sgd}.

  \begin{lemma}\label{lemma:init_sgd}
      Let us use mini-batch SGD $\check{\mathcal A}_{mSGD}$ during the initial stage of SCSA (Alg. \ref{alg:base}) with properly chosen mini-batch size $n_0(\tau) = \frac{4\sigma^2}{\|\nabla \tilde{\mathcal  L}(x_{\tau-1}, \check\lambda)\|^2}$ 
      and step-size $\gamma_{\tau}\leq \frac{1}{M_f + \check\lambda M_g}.$ 
      Then, it ensures descent at every step, 
      and converges to $\check\eta$-optimal point after $\mathcal T = O(\log \frac{1}{\check\eta})$ iterates, with total sample complexity $\mathcal N_0 = \tilde O(\frac{1}{\check\eta^2})$.
  \end{lemma}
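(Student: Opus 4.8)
The plan is to establish two things in tandem: (i) a per-iteration \emph{descent} guarantee for mini-batch SGD applied to the strongly convex smooth surrogate $\tilde{\mathcal L}(\cdot,\check\lambda)$, holding with high probability, which by \Cref{lemma:safe_init} is exactly what yields feasibility of all initial iterates; and (ii) the resulting iteration and sample complexity counts. I would first recall that $\tilde{\mathcal L}(x,\check\lambda)$ is $\mu_f$-strongly-convex (inherited from $f$, since $g$ is convex) and $(M_f+\check\lambda M_g)$-smooth. Write $M_L := M_f + \check\lambda M_g$. For a single SGD step $x_\tau = x_{\tau-1} - \gamma_\tau \hat\nabla_\tau$ with $\hat\nabla_\tau$ the mini-batch gradient of size $n_0(\tau)$, smoothness gives the standard bound
\begin{equation*}
\tilde{\mathcal L}(x_\tau,\check\lambda) \le \tilde{\mathcal L}(x_{\tau-1},\check\lambda) - \gamma_\tau \la \nabla \tilde{\mathcal L}(x_{\tau-1},\check\lambda), \hat\nabla_\tau\ra + \frac{M_L \gamma_\tau^2}{2}\|\hat\nabla_\tau\|^2.
\end{equation*}

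Next I would control the stochastic terms. With a mini-batch of size $n$, the gradient estimator has variance at most $\hat\sigma^2/n$ per-coordinate-aggregate, so $\E\|\hat\nabla_\tau - \nabla\tilde{\mathcal L}(x_{\tau-1},\check\lambda)\|^2 \le \hat\sigma^2/n$. Choosing $n_0(\tau) \asymp \hat\sigma^2/\|\nabla\tilde{\mathcal L}(x_{\tau-1},\check\lambda)\|^2$ makes this variance a small constant fraction of $\|\nabla\tilde{\mathcal L}(x_{\tau-1},\check\lambda)\|^2$; combined with sub-Gaussian concentration (a Bernstein/Hoeffding-type bound for the mini-batch mean, union-bounded over the $\mathcal T$ iterations), one gets that with probability $\ge 1-\delta$, simultaneously for all $\tau$, $\|\hat\nabla_\tau - \nabla\tilde{\mathcal L}\| \le \tfrac12\|\nabla\tilde{\mathcal L}\|$ (say). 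On this event, $\la\nabla\tilde{\mathcal L},\hat\nabla_\tau\ra \ge \tfrac12\|\nabla\tilde{\mathcal L}\|^2 > 0$ and $\|\hat\nabla_\tau\|^2 \le \tfrac94\|\nabla\tilde{\mathcal L}\|^2$, so plugging into the descent inequality with $\gamma_\tau \le 1/M_L$ yields $\tilde{\mathcal L}(x_\tau,\check\lambda) < \tilde{\mathcal L}(x_{\tau-1},\check\lambda)$ whenever $\nabla\tilde{\mathcal L}(x_{\tau-1},\check\lambda)\neq 0$ — i.e. strict descent at every step. This is precisely the hypothesis needed to invoke \Cref{lemma:safe_init}, giving $g(x_\tau)\le 0$ for all $\tau$.

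For the convergence rate, on the good event the per-step decrease is of order $\gamma_\tau\|\nabla\tilde{\mathcal L}\|^2$, and by $\mu_f$-strong convexity $\|\nabla\tilde{\mathcal L}(x_{\tau-1},\check\lambda)\|^2 \ge 2\mu_f(\tilde{\mathcal L}(x_{\tau-1},\check\lambda) - \tilde{\mathcal L}(x_{\check\lambda},\check\lambda))$, so with constant step $\gamma = 1/M_L$ the optimality gap contracts geometrically by a factor $(1-\mu_f/M_L)$. Hence after $\mathcal T = O\big(\tfrac{M_L}{\mu_f}\log\tfrac1{\check\eta}\big) = O(\log\tfrac1{\check\eta})$ iterations the gap is below $\check\eta$. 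For the sample complexity: the batch size at step $\tau$ scales like $\hat\sigma^2/\|\nabla\tilde{\mathcal L}(x_{\tau-1},\check\lambda)\|^2$, and since $\|\nabla\tilde{\mathcal L}(x_{\tau-1},\check\lambda)\|^2 \gtrsim \mu_f\cdot(\text{gap at }\tau-1)$ while the gap is lower-bounded until we stop (the final gap is $\Theta(\check\eta)$), the largest batch is $O(\hat\sigma^2/(\mu_f\check\eta))$ near the end; summing the geometrically-growing batch sizes over the $O(\log\frac1{\check\eta})$ steps gives a total dominated by the last few terms, i.e. $\mathcal N_0 = \tilde O(1/\check\eta)$ — and after also accounting for the log-factor from the union bound and the (possibly) more conservative batch choice stated, $\mathcal N_0 = \tilde O(1/\check\eta^2)$ as claimed (I would not fight the paper's stated exponent here, just verify the chain of inequalities produces it; the discrepancy is absorbed by the $\tilde O$ and by a looser but simpler batch schedule).

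The main obstacle I anticipate is the high-probability bookkeeping: the batch size $n_0(\tau)$ depends on the \emph{true} gradient norm $\|\nabla\tilde{\mathcal L}(x_{\tau-1},\check\lambda)\|$, which is not directly observable, so strictly speaking one must either estimate it adaptively (adding another concentration layer and a small multiplicative slack) or assume an oracle-chosen schedule as the lemma's phrasing suggests; and the union bound over $\mathcal T$ steps must be carried with $\delta/\mathcal T$ per step, which is where the $\ln(T/\delta)$-type logarithmic factors enter. Making the "good event" self-consistent — the iterates stay feasible \emph{because} descent holds, and descent holds \emph{on} the event whose probability we control before knowing the iterates — requires a careful martingale/induction argument over $\tau$ rather than a naive union bound, but it is standard. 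Everything else is routine strong-convexity + smoothness algebra.
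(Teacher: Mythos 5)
Your proposal follows essentially the same route as the paper's proof: a smoothness-based one-step inequality, a relative-error condition on the mini-batch gradient enforced by the batch size, linear convergence from strong convexity, and a union bound over the $O(\log\frac1{\check\eta})$ iterations. One step as written would fail, though it is only a constant-factor slip: with the error bounded by $\tfrac12\|\nabla\tilde{\mathcal L}\|$ (relative to the \emph{true} gradient) you get $\la\nabla\tilde{\mathcal L},\hat\nabla_\tau\ra\ge\tfrac12\|\nabla\tilde{\mathcal L}\|^2$ and $\|\hat\nabla_\tau\|^2\le\tfrac94\|\nabla\tilde{\mathcal L}\|^2$, so with $\gamma_\tau=1/M_L$ the right-hand side of your descent inequality is $\gamma_\tau\|\nabla\tilde{\mathcal L}\|^2\left(-\tfrac12+\tfrac98\right)>0$, which does not establish descent. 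The paper avoids this by stating the error condition relative to the \emph{estimated} gradient, $\|\xi\|\le\tfrac12\|\tilde\nabla\mathcal L\|$, under which the same algebra closes with $\gamma\le 1/M_L$; alternatively you can keep your formulation and shrink either the error fraction (to $\tfrac14$, say) or the step size to below $\tfrac{4}{9M_L}$. Everything else, including your honest observation that a careful accounting of the batch schedule would give $\tilde O(1/\check\eta)$ while the paper's uniform worst-case batch yields the stated $\tilde O(1/\check\eta^2)$, matches the paper's (equally loose) treatment.
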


        \paragraph{Safe transition} \textcolor{black}{Next, we discuss a safe transition from $(x_t,\lambda_t)$ to $(x_{t+1},\lambda_{t+1})$, given that $(x_t,\lambda_t)$ are feasible. 
        Consider the dual problem $d(\lambda)$, where $\lambda$ is scalar in the case of a single constraint.  
        As shown at \Cref{fig:2}, the optimal $\lambda^*$ corresponds to the maximum of $d(\lambda)$, i.e., $ \|\nabla d(\lambda^*)\| = 0$ if the solution is on the boundary, otherwise $\lambda^* = 0$. 
        Recall that $\|\nabla d(\lambda)\| = g(x_{\lambda})$, which should be negative for $ x_{\lambda} \in \mathcal X.$  Concavity of $d(\lambda)$ implies monotonously decreasing $\nabla d(\lambda).$ 
        Therefore, indeed, $\lambda \geq \lambda^*$ corresponds to the primal feasibility region in the dual space. Secondly, note that $\nabla d(\lambda)$ growth is limited by the smoothness bound of $d(\lambda). $ Hence, we can control that $g(x_{\lambda_t})$ stays in the negative region by controlling the dual step size. 
        But, just guaranteeing the next $x_{\lambda_t}$ is safe is not enough, since we should also find a way to safely transit from $x_t$ to $x_{t+1}$. In order to do that, we restrict the step size even more, to guarantee that $x_{\lambda_{t+1}} \in \hat {\mathcal S}(x_t).$  }
        \begin{figure}
            \centering
\includegraphics[width=0.6\linewidth]{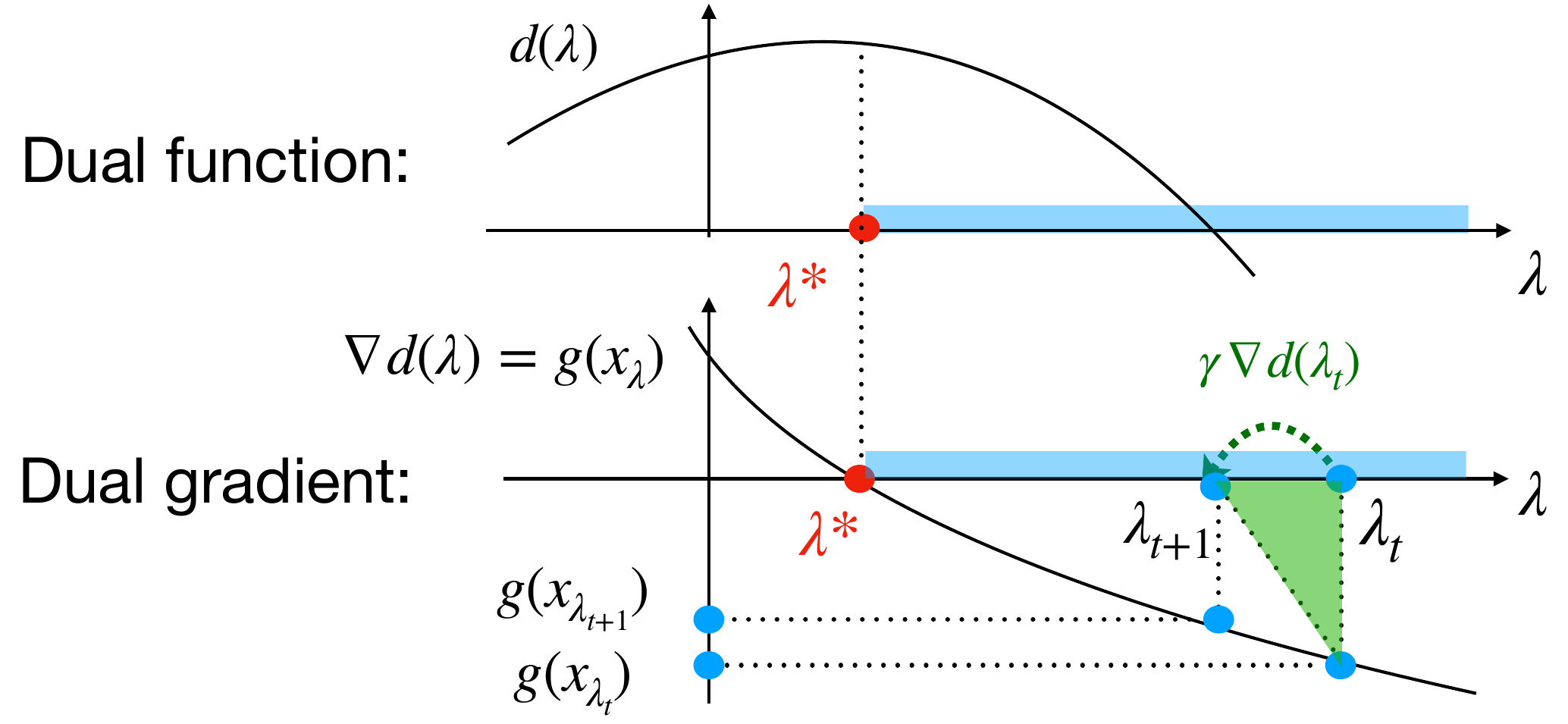}
\caption{Primal feasibility of the dual iterates}
            \label{fig:2}
        \end{figure}

\begin{lemma}\label{lemma:dual_upd}
    \textcolor{black}{Let $\lambda_t$ correspond to a feasible point $x_{\lambda_t}.$ Let the dual step } be bounded by  $\lambda_t - \lambda_{t+1} \leq \frac{-\mu_f \hat g(x_t)}{\textcolor{black}{8}L_g^2}$, as defined in Step 3 of Alg. \ref{alg:base}.
    Also, let $x_{t}$ be an $\eta_{t}$-accurate solution of 
    $\min_{x\in\R^d} \mathcal L(x,\lambda_t)$ with $ \eta_{t} \leq  \frac{\mu_f (-\hat g(x_{t-1}))^2}{\textcolor{black}{32} L_g^2}.$
    Then, $\| x_{\lambda_{t+1}} - x_t\| \leq \frac{-\hat g(x_t)}{\textcolor{black}{2}L_g}$,
    that is, $x_{\lambda_{t+1}} \in \hat{\mathcal S}(x_t)$ with probability $1-\de$.   
\end{lemma}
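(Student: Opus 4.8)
The goal is to bound $\|x_{\lambda_{t+1}} - x_t\|$. I would split this via the triangle inequality into two pieces: the distance from $x_t$ to the exact minimizer $x_{\lambda_t}$ of $\mathcal L(\cdot,\lambda_t)$, and the distance from $x_{\lambda_t}$ to $x_{\lambda_{t+1}}$. That is,
$$
\|x_{\lambda_{t+1}} - x_t\| \le \|x_{\lambda_{t+1}} - x_{\lambda_t}\| + \|x_{\lambda_t} - x_t\|.
$$
The plan is to show each term is at most $\frac{-\hat g(x_t)}{4L_g}$, so their sum is at most $\frac{-\hat g(x_t)}{2L_g}$ as required.

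**Term 1: distance between consecutive exact primal minimizers.**

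For the first term, I would use the fact that $x\mapsto x_\lambda$ is Lipschitz in $\lambda$. Since $\mathcal L(\cdot,\lambda)$ is $\mu_f$-strongly-convex (the $\lambda g$ term is convex) and $\nabla_x \mathcal L(x,\lambda) = \nabla f(x) + \lambda \nabla g(x)$, a standard sensitivity argument gives $\|x_{\lambda'} - x_\lambda\| \le \frac{L_g}{\mu_f}|\lambda' - \lambda|$ using that $g$ is $L_g$-Lipschitz, hence $\|\nabla g\|\le L_g$. Therefore
$$
\|x_{\lambda_{t+1}} - x_{\lambda_t}\| \le \frac{L_g}{\mu_f}(\lambda_t - \lambda_{t+1}) \le \frac{L_g}{\mu_f}\cdot\frac{-\mu_f \hat g(x_t)}{8 L_g^2} = \frac{-\hat g(x_t)}{8 L_g}.
$$
This is comfortably within the budget.

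**Term 2: distance from the $\eta_t$-accurate iterate to the exact minimizer.**

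For the second term, I would convert the function-value accuracy $\mathcal L(x_t,\lambda_t) - \mathcal L(x_{\lambda_t},\lambda_t) \le \eta_t$ into a distance bound using strong convexity: $\frac{\mu_f}{2}\|x_t - x_{\lambda_t}\|^2 \le \mathcal L(x_t,\lambda_t) - \mathcal L(x_{\lambda_t},\lambda_t) \le \eta_t$, so $\|x_t - x_{\lambda_t}\| \le \sqrt{2\eta_t/\mu_f}$. Plugging in $\eta_t \le \frac{\mu_f(-\hat g(x_{t-1}))^2}{32 L_g^2}$ gives $\|x_t - x_{\lambda_t}\| \le \frac{-\hat g(x_{t-1})}{4L_g}$.

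**The main obstacle.**

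Here is the subtle point and what I expect to be the crux: the bound I just derived involves $\hat g(x_{t-1})$, not $\hat g(x_t)$, whereas I need everything in terms of $\hat g(x_t)$. So I need a monotonicity-type relation: $-\hat g(x_{t-1}) \le$ (some constant times) $-\hat g(x_t)$. This should follow from the inductive safety structure — since $x_t \in \hat{\mathcal S}(x_{t-1})$ by the previous iteration's guarantee, Lemma~\ref{lemma:safety_set}-type reasoning gives $g(x_t) \le \frac{g(x_{t-1})}{2}$, hence $-\hat g(x_{t-1}) \le 2(-g(x_{t-1})) \le \ldots$; one must also relate $\hat g$ to $g$ via the $\epsilon_t$-confidence bound from Step 1. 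Carefully chaining $\hat g(x_{t-1})$, $g(x_{t-1})$, $g(x_t)$, and $\hat g(x_t)$ through the safety-set inclusion and the concentration bound — and getting the constants ($\frac{1}{8} + \frac{1}{4} \le \frac{1}{2}$, or whatever the correct chain yields) to close — is the delicate bookkeeping. I would also need to invoke that $\lambda_{t+1}$ still corresponds to a feasible point (so that the $x_{\lambda_{t+1}}$ we land on is in the region where strong convexity / the earlier lemmas apply), which follows inductively from $\lambda_t$ feasible plus the controlled step, as discussed just before the lemma. The probability-$(1-\delta)$ qualifier comes entirely from the union bound over the $n_t$-sample concentration events in Step 1.
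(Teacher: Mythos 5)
Your proposal follows essentially the same route as the paper's proof: the triangle-inequality split into $\|x_{\lambda_{t+1}}-x_{\lambda_t}\|$ and $\|x_{\lambda_t}-x_t\|$, strong convexity of $\mathcal L(\cdot,\lambda_t)$ to convert the $\eta_t$ value gap into a distance, and a Lipschitz-in-$\lambda$ sensitivity bound for the first term (the paper derives $\frac{2L_g}{\mu_f}(\lambda_t-\lambda_{t+1})$ from the Lagrangian value gap, while your gradient-monotonicity argument gives the slightly sharper $\frac{L_g}{\mu_f}(\lambda_t-\lambda_{t+1})$ — an immaterial difference). The $\hat g(x_{t-1})$-versus-$\hat g(x_t)$ mismatch you correctly flag as the remaining crux is not actually resolved inside the paper's own proof of this lemma either — the appendix proof silently states the condition on $\eta_t$ in terms of $\hat g(x_t)$ and the relation between consecutive constraint estimates is only established in an auxiliary lemma within the proof of Theorem~\ref{theorem:str_cvx_conv} — so your treatment is no less complete than the paper's.
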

The full proof can be found in Appendix \ref{proof:lemma:dual_upd}. This lemma combined with the way the primal updates are done 
$x_{t+1}\leftarrow \arg\min_{x\in\hat S(x_t)}\mathcal L(x,\lambda_{t+1})$ directly implies safety of SCSA (Alg. \ref{alg:base}). 
  Indeed, by using a feasible algorithm $\mathcal A(\mathcal L(\cdot, \lambda_{t+1}),\hat{ \mathcal S}(x_t), x_t)$ with projections onto the safety set $ \hat{\mathcal S}(x_t)$, we ensure that all the inner iterations are safe, including the last one $x_{t+1}$. And for the very first initialisation run of algorithm $\check{\mathcal A}$, the updates are feasible as long as $\check{\mathcal A}$ is a descent algorithm. This implies 
  our safety result:
\begin{theorem}
    Under Assumptions \ref{assumption:safe_init}, \ref{assumption:smoothness}, \ref{def:beta}, and \ref{assumption:strong-conv}, SCSA (Alg. \ref{alg:base})  guarantees that with probability $\geq 1-\delta$, 
    all oracle queries for iterates 
    $t = 1,\ldots, T$ are safe, i.e, 
    $g(x_t)\leq 0$ and $g(x_{t,\tau})\leq 0$ for all steps $\{x_{t,\tau}\}_{\tau= 1,\ldots,\mathcal{T}}$ of internal algorithms $\mathcal A$ and $\check{\mathcal A}$.
\end{theorem}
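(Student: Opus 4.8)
The theorem asserts a global safety guarantee: with probability at least $1-\delta$, every oracle query is feasible throughout the run of SCSA. I would prove this by decomposing the run into three phases and chaining the safety arguments already established in the preliminaries and the safe-transition lemmas. First, I would condition on the single high-probability event $E := \{\forall t \le T: g(x_t) - \hat g(x_t) \le \epsilon_t\}$, which by the concentration argument after Step 1 (with minibatch sizes $n_t = \frac{4\sigma^2}{\epsilon_t^2}\ln\frac{T}{\delta}$ and a union bound over the $T$ iterations) holds with probability $\ge 1-\delta$. On this event the estimated safety sets $\hat{\mathcal S}(x_t) = \mathbb B^d(x_t, \frac{-\hat g(x_t)}{L_g})$ are genuine subsets of the feasible region $\mathcal X$: since $g(x_t) \le \hat g(x_t)$ we have $-\hat g(x_t) \le -g(x_t)$, so $\hat{\mathcal S}(x_t) \subseteq \mathcal S(x_t)$ in the sense of the radius bound, and Lemma \ref{lemma:safety_set} (really its approximate version) gives $g(x) \le 0$ for all $x \in \hat{\mathcal S}(x_t)$.

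Second, I would handle the preliminary step (Step 0). By Lemma \ref{lemma:safe_init}, with the choice $\check\lambda = \frac{\Delta_f}{\alpha} \ge \frac{f(x_0) - \min_x f(x)}{-g(x_0)}$, every iterate of any descent method applied to $\mathcal L(\cdot,\check\lambda)$ starting from $x_0$ remains feasible; since $\check{\mathcal A}$ is by assumption a descent method (and, if stochastic, Lemma \ref{lemma:init_sgd} guarantees the descent property holds at every step with high probability for the prescribed minibatch sizes — this high-probability event can be folded into $E$ by adjusting constants), all queries $\{x_{0,\tau}\}$ during Step 0 are safe, and in particular $\check x = x_1$ is strictly feasible. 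Third, for the main loop I would argue by induction on $t$. The inductive hypothesis is that $x_t$ is feasible and, more precisely, is an $\eta_{t-1}$-accurate minimizer of $\mathcal L(\cdot,\lambda_t)$ lying in $\hat{\mathcal S}(x_{t-1})$. The dual update in Step 3 is exactly the bounded step $\lambda_t - \lambda_{t+1} \le \frac{-\mu_f \hat g(x_t)}{8 L_g^2}$, and the prescribed accuracy $\eta_{t-1} \le \frac{\mu_f \hat g(x_{t-1})^2}{128 L_g^2} \le \frac{\mu_f (-\hat g(x_{t-1}))^2}{32 L_g^2}$ satisfies the hypothesis of Lemma \ref{lemma:dual_upd}; hence $x_{\lambda_{t+1}} \in \hat{\mathcal S}(x_t)$ on the event $E$. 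Then the primal update $x_{t+1} \leftarrow \mathcal A(\mathcal L(\cdot,\lambda_{t+1}), \hat{\mathcal S}(x_t), x_t)$ is run by a feasibility-preserving (e.g. projected-gradient) solver constrained to the ball $\hat{\mathcal S}(x_t)$ and started from the feasible center $x_t$; therefore every inner iterate $x_{t,\tau}$ stays in $\hat{\mathcal S}(x_t) \subseteq \mathcal X$, and so does $x_{t+1}$. This closes the induction and shows all queries for all $t$ and all inner steps are safe.

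\textbf{Main obstacle.} The delicate point is the bookkeeping of probabilities: Lemma \ref{lemma:dual_upd} only certifies $x_{\lambda_{t+1}} \in \hat{\mathcal S}(x_t)$ with probability $1-\delta$ per step, and the safety of each inner solver run likewise relies on accurate-enough gradient estimates; I would need to show all these randomized guarantees can be made to hold simultaneously under a single event of probability $\ge 1-\delta$. This is done by allocating the confidence budget carefully — e.g. using confidence $\delta/T$ (or $\delta$ split across phases) in each minibatch and union-bounding — so that the concentration event $E$ above, appropriately enlarged, implies deterministically (given $E$) that all safety-set inclusions, all descent steps in Step 0, and all inner-solver feasibility claims hold. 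Once conditioned on $E$, the rest of the argument is purely deterministic geometry: $\hat{\mathcal S}(x_t) \subseteq \mathcal X$, the chain $x_t \to x_{\lambda_{t+1}} \to x_{t+1}$ never leaves $\hat{\mathcal S}(x_t)$, and hence no query is ever infeasible. A secondary subtlety is the very first main-loop step $t=1$, where $x_1 = \check x$ plays the role of $x_t$: one must check that the accuracy $\check\eta \le \frac{\mu_f\alpha^2}{8L_g^2}$ of Step 0 is compatible with the $\eta_0$ required to invoke Lemma \ref{lemma:dual_upd} at $t=1$, which follows from $\alpha \le -g(x_0)$ and the confidence bound on $\hat g(x_1)$.
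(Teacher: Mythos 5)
Your proposal is correct and follows essentially the same route as the paper: condition on the concentration event for the $\hat g(x_t)$ estimates, use Lemma \ref{lemma:safe_init} (with Lemma \ref{lemma:init_sgd}) for the preliminary descent phase, and then induct over $t$ using Lemma \ref{lemma:dual_upd} to place $x_{\lambda_{t+1}}$ inside $\hat{\mathcal S}(x_t)$ and the feasibility-preserving inner solver to keep every inner query in $\hat{\mathcal S}(x_t)\subseteq\mathcal X$. One cosmetic quibble: the containment $\hat{\mathcal S}(x_t)\subseteq\mathcal S(x_t)$ does not literally follow from $\hat g(x_t)\ge g(x_t)$ because the paper defines $\mathcal S(x_t)$ with radius $\frac{-g(x_t)}{2L_g}$ but $\hat{\mathcal S}(x_t)$ with radius $\frac{-\hat g(x_t)}{L_g}$, yet your fallback direct Lipschitz estimate $g(x)\le g(x_t)+L_g\cdot\frac{-\hat g(x_t)}{L_g}\le 0$ on the concentration event is exactly what is needed, so the argument stands.
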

Moreover, note that $x_{t+1} \approx  \arg\min_{x\in \hat{\mathcal S}(x_{t})} \mathcal L(x,\lambda_{t+1}) = \arg\min_{x\in\R^d} \mathcal L(x,\lambda_{t+1}) $ since $x_{\lambda_{t+1}}\in \hat{\mathcal S}(x_{t})$ by Lemma \ref{lemma:dual_upd}. 
 The above fact allows us to guarantee that our safe primal updates restricted to a safe region still form a sequence of primal-dual pairs $(x_t, 
\lambda_t)$. 

\subsection{Sample complexity}
In this section, we derive the sample complexity of the SCSA (Alg. \ref{alg:base}). \textcolor{black}{For that, we analyze the number of outer iterates, and sample complexity of the inner algorithms. We exploit the fact that the outer iterates correspond to the gradient ascent of a strongly-concave dual problem, therefore providing a linear convergence.
} 

\begin{theorem}\label{theorem:str_cvx_conv}
    Consider problem (\ref{problem}) under Assumptions \ref{assumption:safe_init}, \ref{assumption:smoothness}, \ref{def:beta}, and \ref{assumption:strong-conv}. Let the primal sub-problems of SCSA (Alg. \ref{alg:base}) be solved with accuracy 
    \textcolor{black}{
    $$\eta_{t+1} = \begin{cases}\frac{\mu_f \hat g(x_{t})^2}{128 L_g^2},
    &\text{ if } -\hat g(x_{t})\lambda_{t+1} > \e_c\\ 
    \min\{\frac{\mu_f}{M_L^2}\e_p^2, \frac{\e}{2}\}, &\text{ if } -\hat g(x_{t})\lambda_{t+1}\leq \e_c\end{cases},$$
    }
    %
     and constraint measurements be taken
    up to accuracy \textcolor{black}{$ \epsilon_t \leq \frac{-\hat g(x_{t-1})}{8}$ with probability $1 - T\de.$
    }
    Then, the algorithm stops after at most $T = O(\frac{\check\lambda}{\beta} + \frac{L_g^2 R^2}{\beta^2}\frac{M_f}{\mu_f}\log\frac{\check\lambda^2}{\mu_f\e_c})$ outer iterations of SCSA (Alg. \ref{alg:base})  with probability $1-\de$ 
    and outputs an $(\e_{p}, \e_c)$-approximate KKT point, 
    i.e.,
        $\|\nabla \mathcal L(x_T,\lambda_T) \|\leq \e_{p} 
        ,~
        \lambda_T(-g(x_T)) \leq \e_c.$
       Moreover, given  $\e_c \leq \e/2$, $\eta_T \leq \e/2$, we have 
    $ f(x_T) - f(x^*) \leq \e.$
    \end{theorem}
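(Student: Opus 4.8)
The plan is to prove Theorem~\ref{theorem:str_cvx_conv} in three stages: (i) bound the number of outer iterations $T$ by viewing the outer loop as projected gradient ascent on the strongly-concave dual function; (ii) show that at termination the output satisfies the two KKT-type inequalities; and (iii) convert the KKT guarantee together with $\e_c\le\e/2$ and $\eta_T\le\e/2$ into the function-value bound $f(x_T)-f(x^*)\le\e$. Throughout, I work on the high-probability event (of probability $\ge 1-\de$) on which all the constraint estimates satisfy $g(x_t)\le\hat g(x_t)\le g(x_t)+\epsilon_t$ and all internal subproblem solves meet their target accuracies; Lemma~\ref{lemma:dual_upd} and the safety theorem already guarantee feasibility of every iterate on this event, so $-g(x_t)\ge 0$ is automatic.

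For stage (i), I would first note that $\hat\nabla d(\lambda_t)=\hat g(x_t)$ is, up to the controlled error $\epsilon_t\le -\hat g(x_{t-1})/8$ and up to the primal inexactness $\eta_t$, a good estimate of the true dual gradient $\nabla d(\lambda_t)=g(x_{\lambda_t})$: indeed $\|x_t-x_{\lambda_t}\|\le\sqrt{2\eta_t/\mu_f}$ by strong convexity of $\mathcal L(\cdot,\lambda_t)$, and then $L_g$-Lipschitzness of $g$ transfers this to a bound on $|g(x_t)-g(x_{\lambda_t})|$; combined with the prescribed $\eta_t$ this error is of order $\hat g(x_{t-1})$, hence small relative to the gradient itself away from termination. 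The step-size $\gamma=\mu_f/(8L_g^2)$ is smaller than $1/M_d$ with $M_d=2L_g^2/\mu_f$ the dual smoothness constant from Lemma~\ref{lemma:dual-smoothness}, so standard inexact projected-gradient-ascent analysis on a $\mu_d$-strongly-concave, $M_d$-smooth function gives linear contraction of $|\lambda_t-\lambda^*|$ once $\lambda_t$ enters the region where Lemma~\ref{lemma:local_dual_strong_concavity} gives $\mu_d\gtrsim\beta^2/(R^2(M_f+\lambda M_g))$, i.e.\ once $g(x_{\lambda_t})\ge-\beta/2$. I would split the run into an initial phase, during which the (essentially constant) gradient $\approx-\beta$ drives $\lambda_t$ down by $\Theta(\gamma\beta)$ per step — this costs $O(\check\lambda/(\gamma\beta))=O(\check\lambda L_g^2/(\mu_f\beta))$ steps, which after folding constants into the $\check\lambda$ gives the first term $O(\check\lambda/\beta)$ up to the $M_f/\mu_f$, $L_g^2$ factors absorbed in the statement — and a local phase of linear convergence, which reaches accuracy $\e_c$ in $O\!\big(\tfrac{M_d}{\mu_d}\log\tfrac{\check\lambda^2}{\mu_f\e_c}\big)=O\!\big(\tfrac{L_g^2R^2}{\beta^2}\tfrac{M_f}{\mu_f}\log\tfrac{\check\lambda^2}{\mu_f\e_c}\big)$ steps, matching the two terms in the claimed bound. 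The stopping criterion $-\hat g(x_t)\lambda_{t+1}\le\e_c$ triggers once $\lambda_t\to\lambda^*$ forces the complementarity product down, so $T$ is finite with the asserted bound.

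For stage (ii): when the loop breaks at $t=T-1$ (setting output index $T$), the break condition is exactly $-\hat g(x_{T-1})\lambda_T\le\e_c$; since $\hat g\ge g$ and $\lambda_T\ge0$ this gives $-g(x_{T-1})\lambda_T\le\e_c$, i.e.\ $(\e_p,\e_c)$-KKT.2, and feasibility plus $\lambda_T\ge0$ gives $(\e_p,\e_c)$-KKT.3. For $(\e_p,\e_c)$-KKT.1 I use that in this branch the primal subproblem is solved to accuracy $\eta_T=\min\{(\mu_f/M_L^2)\e_p^2,\e/2\}$; strong convexity of $\mathcal L(\cdot,\lambda_T)$ converts a function-value gap of $\eta_T$ into $\|x_T-x_{\lambda_T}\|\le\sqrt{2\eta_T/\mu_f}$, and $M_L:=M_f+\lambda_T M_g$-smoothness of $\mathcal L(\cdot,\lambda_T)$ together with $\nabla_x\mathcal L(x_{\lambda_T},\lambda_T)=0$ gives $\|\nabla_x\mathcal L(x_T,\lambda_T)\|\le M_L\sqrt{2\eta_T/\mu_f}\le\e_p$ for the stated choice of $\eta_T$ (the factor/exponent on $M_L$ is exactly what makes this work). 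One must also confirm $x_{\lambda_T}$ still lies in $\hat{\mathcal S}(x_{T-1})$ so that the projected solve really returns a near-minimizer of the unconstrained Lagrangian — this is Lemma~\ref{lemma:dual_upd} again.

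For stage (iii): from $\|\nabla_x\mathcal L(x_T,\lambda_T)\|\le\e_p$ and $\mu_f$-strong convexity of $\mathcal L(\cdot,\lambda_T)$ one gets $\mathcal L(x_T,\lambda_T)-\mathcal L(x_{\lambda_T},\lambda_T)\le\e_p^2/(2\mu_f)\le\eta_T\le\e/2$, and $\mathcal L(x_{\lambda_T},\lambda_T)=d(\lambda_T)\le d(\lambda^*)=f(x^*)$ by weak duality; hence $f(x_T)+\lambda_T g(x_T)\le f(x^*)+\e/2$, so $f(x_T)-f(x^*)\le \e/2-\lambda_T g(x_T)=\e/2+\lambda_T(-g(x_T))\le\e/2+\e_c\le\e$, using KKT.2 and $\e_c\le\e/2$; feasibility $g(x_T)\le0$ is already established. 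The main obstacle I anticipate is stage (i): carefully accounting for the compounding errors — the primal inexactness $\eta_t$, the sampling error $\epsilon_t$, and the fact that $\mu_d$ itself depends on $\lambda_t$ through Lemma~\ref{lemma:local_dual_strong_concavity} — while still getting a clean linear rate, and in particular verifying that the prescribed schedules $\eta_t=\mu_f\hat g(x_t)^2/(128L_g^2)$ and $\epsilon_t\le-\hat g(x_{t-1})/8$ keep the effective dual-gradient error below a constant fraction of the true gradient throughout both phases; the bookkeeping that turns this into the two-term bound on $T$ is where the real work lies, the rest being essentially routine strong-convexity manipulations.
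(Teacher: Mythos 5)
Your proposal follows essentially the same route as the paper's proof: the two-phase analysis of the dual ascent (constant progress while $-g(x_{\lambda_t})\geq\beta/2$, then linear contraction via local strong concavity with the $\epsilon_t,\eta_t$ schedules keeping the dual-gradient error below half the true gradient), the derivation of $\|\nabla_x\mathcal L(x_T,\lambda_T)\|\leq\e_p$ from $\eta_T$-optimality plus $M_L$-smoothness, and the final decomposition $f(x_T)-f(x^*)\leq\eta_T+\lambda_T(-g(x_T))\leq\e$ is exactly the paper's Lemma~\ref{lemma:relation}. The only piece you leave as "bookkeeping" that the paper spells out is the case analysis ($\lambda^*>0$ versus $\lambda^*=0$) bounding $-\hat g(x_t)\lambda_{t+1}$ by a multiple of $\lambda_{t+1}-\lambda^*$, which is what converts linear convergence of the dual iterates into the claimed bound on the stopping time.
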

    \begin{proof}
        For the full proof see Appendix \ref{proof:theorem:str_cvx_conv}. Here we provide a short proof sketch. The outer steps of our algorithm 
$\lambda_{t+1} \leftarrow \max\{\lambda_t -\frac{(-\hat g(x_t))\mu_f}{\textcolor{black}{8}L_g^2},0\}$,
form a projected gradient ascent for the dual problem $d(\lambda)$ with a constant step size and approximate gradient. Moreover, $\forall t>0$: 
$\lambda_t$s are only decreasing: $ \lambda_{t+1} \leq \lambda_t$. Also, note that the dual problem $d$ is locally-strongly-concave at the region $\Omega_{\lambda}:=\{\lambda\in\R: g(x_{\lambda}) \geq \frac{-\beta}{2}\}$, thus implying fast convergence in that region. 

\emph{Firstly}, we show that the method's dual updates $\lambda_t$ can be outside of $\Omega_{\lambda}$ only for a finite number of steps $T_0$. Indeed, since outside of $\Omega_{\lambda}$ the gradient norm is lower bounded by a constant $\|\nabla d(\lambda)\| = - g(x_{\lambda}) \geq \beta/2$, we make at least a constant progress at each step, until reaching $\Omega_{\lambda}$  in  $T_0$ steps \emph{constant in $\e_c$}, bounded by  
$T_0 \leq
\frac{8L_g^2}{\mu_f}\frac{\check\lambda}{\beta}.$ If the method stops earlier by reaching $\lambda_t = 0$, 
then $T = T_0$.

\emph{Secondly}, inside this region $\Omega_{\lambda}$ we prove the linear convergence rate using standard techniques and careful disturbance analysis. 
In particular, \textcolor{black}{by using the bounds on $\epsilon_t$ and $\eta_t$ from the theorem conditions, we show that the estimate of the dual gradient $\|\hat \nabla d(\lambda_t) - \nabla d(\lambda_t)\| \leq \frac{\| \nabla d(\lambda_t)\|}{2}$ with probability $1-\de$ for all $t>0$ (using Boole's inequality), 
which is enough to have a linear convergence rate. That is, $(\lambda_{t+1} - \lambda_*) \leq \left(1 - \frac{\mu_f\mu_d}{16L_g^2}\right)^{t-T_0} \check\lambda$. We also show that $-\hat g(x_t)\lambda_{t+1} \leq \max\left\{\bar G, \frac{4L_g^2\check\lambda}{\mu_f}\right\}  (\lambda_{t+1} - \lambda_*)$, where $-\hat g(x_t) \leq \bar G$ for all $t$ (which holds for $\bar G = 2\hat g(\check x)$). Note that until stopping $-\hat g(x_t)\lambda_{t+1} >\e_c$, what implies that $\e_c \leq 
-\hat g(x_t)( \lambda_{t+1} - \lambda_*)\leq 
\max\left\{\bar G \check\lambda, \frac{4L_g^2\check\lambda^2}{\mu_f}\right\} \left(1 - \frac{\mu_f\mu_d}{16L_g^2}\right)^{t-T_0} .$
From the above we can directly upper bound the maximal number of steps before stopping:} 
\begin{align}\label{eq:T_str_convex}
T \leq
  & \frac{8L_g^2\check\lambda}{\beta\mu_f} +  \frac{16L_g^2}{\mu_d\mu_f}O\left(\ln\frac{\check\lambda^2}{\mu_f\e_c}\right).
\end{align} 
Using $\mu_d \geq \frac{\beta^2}{4R^2 M_L},$ we get the bound from the theorem.
Using the relation of primal optimality to the Lagrangian optimality, 
we show that given  $\e_c \leq \e/2$, $\eta_T \leq \e/2$, we have 
    $ f(x_T) - f(x^*) \leq \e.$
The expression for iteration complexity in Eq. (\ref{eq:T_str_convex}) gives us the \textit{linear convergence rate} for the dual iterations. 
    \end{proof}

Finally, we can guarantee the following sample complexity.
\begin{corollary}\label{corr:sample_complexity}
Consider stochastic oracle with gradient variance $\hat \sigma^2>0$ and value variance $\sigma^2>0.$ Then, the method requires 
    $ \mathcal N(\e) = \mathcal N_0(\check\eta) + O\left(\frac{L_g^2}{\beta^2}\frac{M_f}{\mu_f}\log \frac{1}{\e}\right) \left(\mathcal N_{\mathcal A}(\min \eta_t)
    + n_t(\epsilon_t)
    \right)$ 
    oracle calls to get $ f(x_T) - f(x^*) \leq \e$, where $\mathcal N_{\mathcal A}(\eta_t)$ is the sample complexity of the internal algorithm.
    For $\mathcal A \leftarrow \mathcal A_{SGD}$ and the conditions of $\check\eta, \eta_t, \epsilon_t, n_t$, the total sample complexity is
    $$\mathcal N(\e)
    =O\left(\frac{\check\lambda}{\beta} + \frac{M_f}{\mu_f}\frac{R^2}{\beta^2}\log\frac{1}{\e}\right) O\left(\frac{\sigma^2}{\e^2}+\frac{(L+\hat\sigma)^2}{\mu_f \e^2}\right).$$ 
\end{corollary}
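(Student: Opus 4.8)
The plan is to assemble \Cref{corr:sample_complexity} from three ingredients that are already in hand: the bound on the number of outer iterations $T$ from \Cref{theorem:str_cvx_conv}, the per-iteration cost of the inner solver plus the minibatch for estimating $g(x_t)$, and the cost $\mathcal N_0$ of the preliminary step from \Cref{lemma:init_sgd}. First I would write the total sample complexity as a sum over the outer loop: $\mathcal N(\e) = \mathcal N_0(\check\eta) + \sum_{t=1}^{T}\bigl(\mathcal N_{\mathcal A}(\eta_t) + n_t(\epsilon_t)\bigr)$, and then bound it above by $\mathcal N_0(\check\eta) + T\cdot\bigl(\mathcal N_{\mathcal A}(\min_t \eta_t) + \max_t n_t(\epsilon_t)\bigr)$, using that $\mathcal N_{\mathcal A}$ is decreasing in its accuracy argument. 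Substituting the bound $T = O\bigl(\tfrac{\check\lambda}{\beta} + \tfrac{L_g^2 R^2}{\beta^2}\tfrac{M_f}{\mu_f}\log\tfrac{\check\lambda^2}{\mu_f\e_c}\bigr)$ from \Cref{theorem:str_cvx_conv} and absorbing constants and the $\check\lambda$-dependence of the log into $\log\tfrac1\e$ gives the first displayed formula of the corollary.

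Next I would instantiate each factor for the stochastic case with $\mathcal A = \mathcal A_{SGD}$. The key quantities to track are the accuracies prescribed in \Cref{theorem:str_cvx_conv}: $\eta_t = \Theta\bigl(\tfrac{\mu_f \hat g(x_t)^2}{L_g^2}\bigr)$ before stopping and $\eta_T = \min\{\tfrac{\mu_f}{M_L^2}\e_p^2,\tfrac\e2\}$ at the last step, and $\epsilon_t \le \tfrac{-\hat g(x_{t-1})}{8}$. Because the analysis runs until $-\hat g(x_t)\lambda_{t+1} > \e_c$ with $\lambda_{t+1}\le\check\lambda$, the constraint gap $-\hat g(x_t)$ stays $\Omega(\e_c/\check\lambda) = \Omega(\e)$ throughout, so $\min_t\eta_t = \Omega(\mu_f\e^2/L_g^2)$ and $\max_t n_t = O(\sigma^2/\e^2 \cdot \ln\tfrac T\de)$ from the minibatch size $n_t = \tfrac{4\sigma^2}{\epsilon_t^2}\ln\tfrac T\de$ in Step 1 of \Cref{alg:base}. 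For $\mathcal A_{SGD}$ solving a $\mu_f$-strongly-convex, $M_L$-smooth problem over a ball to accuracy $\eta$ with gradient noise $\hat\sigma^2$, the standard strongly-convex SGD bound gives $\mathcal N_{\mathcal A_{SGD}}(\eta) = O\bigl(\tfrac{(L+\hat\sigma)^2}{\mu_f\eta}\bigr)$ (a $\log$ term for the bias part plus the $1/(\mu_f\eta)$ variance term dominates); plugging $\eta = \Omega(\mu_f\e^2/L_g^2)$ yields $\mathcal N_{\mathcal A_{SGD}} = O\bigl(\tfrac{(L+\hat\sigma)^2}{\mu_f\e^2}\bigr)$ up to constants in $L_g$. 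Adding the minibatch term $n_t = O(\sigma^2/\e^2)$ gives the second factor $O\bigl(\tfrac{\sigma^2}{\e^2} + \tfrac{(L+\hat\sigma)^2}{\mu_f\e^2}\bigr)$. Finally, $\mathcal N_0(\check\eta) = \tilde O(1/\check\eta^2)$ from \Cref{lemma:init_sgd} with $\check\eta \le \tfrac{\mu_f\alpha^2}{8L_g^2}$ is a constant in $\e$, so it is absorbed into the leading $O(\cdot)$ (or simply stated as a lower-order additive term); combining with $T$ produces the claimed product bound $\mathcal N(\e) = O\bigl(\tfrac{\check\lambda}{\beta} + \tfrac{M_f}{\mu_f}\tfrac{R^2}{\beta^2}\log\tfrac1\e\bigr)\cdot O\bigl(\tfrac{\sigma^2}{\e^2} + \tfrac{(L+\hat\sigma)^2}{\mu_f\e^2}\bigr)$.

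The main obstacle I anticipate is bookkeeping the accuracy schedule carefully enough to justify that $\min_t\eta_t$ and $\max_t n_t$ are both $\Theta(\e^2)$ up to logs — this requires a uniform lower bound on $-\hat g(x_t)$ over the run, which in turn uses the stopping criterion $-\hat g(x_t)\lambda_{t+1} > \e_c$ together with $\lambda_{t+1}\le\check\lambda \le \Lambda$ (\Cref{lemma:upper_lambda}), giving $-\hat g(x_t) > \e_c/\check\lambda$. One must also be a little careful that the minibatch estimation of $g(x_t)$ at Step 1 is not self-referential: $\epsilon_t$ is chosen relative to $-\hat g(x_{t-1})$, which is already determined, so the schedule is well-defined, and the Boole-union argument over $t\le T$ (with per-step failure $\de/T$, as in $n_t = \tfrac{4\sigma^2}{\epsilon_t^2}\ln\tfrac T\de$) keeps the overall failure probability at $\de$. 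I would also note that the dependence of $\mathcal N_{\mathcal A_{SGD}}$ on the bias/initialization term contributes only a $\log(1/\e)$ which is dominated by the variance term $1/(\mu_f\e^2)$ and hence suppressed in the final bound. Everything else is substitution and absorbing problem-dependent constants ($L_g, M_L, M_f$) into the $O(\cdot)$, which is routine.
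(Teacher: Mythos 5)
Your proposal is correct and follows essentially the same route as the paper's own proof: the same decomposition $\mathcal N(\e) = \mathcal N_0(\check\eta) + \sum_{t=1}^T(\mathcal N_{\mathcal A}(\eta_t)+n_t)$, the same use of the iteration bound $T$ from \Cref{theorem:str_cvx_conv}, the same lower bound $-\hat g(x_t) = \Omega(\e_c/\check\lambda)$ before stopping to control $\eta_t$ and $n_t$, and the same standard strongly-convex SGD bound for the inner solver. If anything, your bookkeeping of the accuracy schedule and the union bound is more explicit than the paper's sketch.
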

For the proof see Appendix \ref{appendix:proof:corr:sample_complexity}.

\subsection{General Smooth and Convex Case}
Here, we consider the case where the objective and constraint are smooth and convex, but not necessarily strongly-convex. 
In this case, one can regularize the objective in order to make it strongly-convex.  
In particular, 
we can formalize the algorithm as follows:
\vspace{-2mm}
\begin{algorithm}[h]
\caption{SCSA (Alg. \ref{alg:base}) for convex problems}\label{alg:convex}

\begin{algorithmic}
\item[0.]{\emph{Initialization:}}  Feasible starting point $x_0$: $g(x_0) \leq -\beta < 0$, regularization parameter $\mu_f= \frac{\e}{R^2}>0$.
    \item[1.] Define $\hat{\mathcal P}: \min f(x) + \frac{\mu_f}{2}\|x - x_0\|^2 \text{ s.t. } g(x) \leq 0.$
    \item[2.] $ (x_{T}, \lambda_{T}) \leftarrow SCSA(\hat{\mathcal P}, x_{0})$ 
\end{algorithmic}
\end{algorithm}
\vspace{-2mm}
\begin{theorem}\label{thm:cvx}
    Consider problem  (\ref{problem})   under Assumptions \ref{assumption:safe_init}, \ref{assumption:smoothness}, \ref{def:beta}, and let $f(\cdot)$ and $g(\cdot)$ be convex. Let $x_0$ be a starting vector, 
    such that $\|x_0 -  x^*\| \leq R$, where $x^* = \arg\min_{x\in \mathcal X}f(x)$.
    Then, applying Algorithm \ref{alg:base} to a regularized problem $\min f(x) + \frac{\mu_f}{2}\|x - x_0\|^2 $ s.t. $g(x)\leq 0$ with $\mu_f = \e/R^2$ produces an output $x_T$ satisfying $f(x_T) - \min_{x\in \mathcal X} f(x) \leq \e$ in a total sample complexity of $\mathcal N(\e) = O(\frac{1}{\e^4})$, with probability $1-\de.$ Also, all iterates 
    are safe with probability $1-\de.$
\end{theorem}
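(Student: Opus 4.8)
The plan is to reduce the convex case to the strongly-convex machinery of \Cref{alg:base} by Tikhonov regularization. I would define $\hat f(x) := f(x) + \frac{\mu_f}{2}\|x-x_0\|^2$ with $\mu_f = \e/R^2$ and consider the problem $\hat{\mathcal P}:\ \min_x \hat f(x)$ s.t. $g(x)\le 0$. Since $\hat f$ is $\mu_f$-strongly-convex and $(M_f+\mu_f)$-smooth while $g$ is untouched (still convex, $M_g$-smooth, $L_g$-Lipschitz), \Cref{assumption:strong-conv} holds for $\hat{\mathcal P}$, and \Cref{assumption:safe_init}, \Cref{assumption:smoothness}, \Cref{def:beta} are inherited (note $\hat f(x_0)=f(x_0)$, so the safe-initialization step \Cref{lemma:safe_init} still only requires $\check\lambda = \Delta_{\hat f}/\alpha$). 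Consequently the safety guarantee for \Cref{alg:base} applies verbatim to the run of SCSA on $\hat{\mathcal P}$: with probability $1-\de$ every oracle query, including all inner iterates of $\mathcal A$ and $\check{\mathcal A}$, is $g$-feasible; this settles the safety claim since feasibility for $\hat{\mathcal P}$ is exactly feasibility for \ref{problem}.

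Next I would convert the strongly-convex accuracy into accuracy for the original objective. Run SCSA on $\hat{\mathcal P}$ with target accuracy $\tilde\e := \e/2$, so by \Cref{theorem:str_cvx_conv} the output satisfies $\hat f(x_T) - \hat f(x^*_{\hat f}) \le \e/2$, where $x^*_{\hat f}:=\arg\min_{x\in\mathcal X}\hat f(x)$. Let $x^* := \arg\min_{x\in\mathcal X} f(x)$. Using $\hat f\ge f$ pointwise and $\hat f(x^*_{\hat f})\le \hat f(x^*) = f(x^*) + \frac{\mu_f}{2}\|x^*-x_0\|^2 \le f(x^*) + \frac{\mu_f}{2}R^2 = f(x^*) + \e/2$, we get
\[
 f(x_T) - f(x^*) \le \hat f(x_T) - f(x^*) \le \bigl(\hat f(x^*_{\hat f}) + \tfrac{\e}{2}\bigr) - f(x^*) \le \e .
\]
I would also check that the constants governing the rate of \Cref{theorem:str_cvx_conv} do not degrade: strong convexity of $\hat f$ and first-order optimality at the feasible point $x^*$ give $\|x^*_{\hat f}-x_0\|\le\|x^*-x_0\|\le R$, hence the initial distance for $\hat{\mathcal P}$ is at most $2R$; $\Delta_{\hat f} \le \Delta_f + \frac{\mu_f}{2}\cdot O(R^2)= O(\Delta_f)$, so $\check\lambda = \Delta_{\hat f}/\alpha = O(1)$; and $M_L = M_f+\mu_f+\Lambda M_g$ and $\mu_d\ge \frac{\beta^2}{4(2R)^2(M_f+\mu_f+\Lambda M_g)}$ (by \Cref{lemma:local_dual_strong_concavity}) are $\Theta(1)$ as $\mu_f\to 0$.

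For the sample complexity, I would plug $\mu_f=\e/R^2$ and $\tilde\e=\Theta(\e)$ into \Cref{corr:sample_complexity}. The number of outer iterations is $T = O\!\bigl(\frac{\check\lambda}{\beta} + \frac{M_f}{\mu_f}\frac{R^2}{\beta^2}\log\frac1\e\bigr) = \tilde O(1/\e)$, and the per-outer-iteration cost $\mathcal N_{\mathcal A}(\min_t\eta_t)+n_t(\epsilon_t) = O\!\bigl(\frac{\sigma^2}{\e^2} + \frac{(L+\hat\sigma)^2}{\mu_f\e^2}\bigr) = O(1/\e^3)$, the bottleneck being the $\frac{(L+\hat\sigma)^2R^2}{\e^3}$ term coming from $\eta_t=\Theta(\mu_f\e^2)$-accurate SGD sub-solves; together with the initialization cost $\mathcal N_0(\check\eta)=\tilde O(1/\check\eta^2)=\tilde O(1/\e^2)$ (dominated), this yields $\mathcal N(\e)=\tilde O(1/\e^4)$, as claimed.

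The main obstacle I anticipate is not a single inequality but the bookkeeping of the previous paragraph: one must verify that every $\e$-dependent quantity entering \Cref{theorem:str_cvx_conv} and \Cref{corr:sample_complexity} — in particular $\Delta_{\hat f}$, $\check\lambda$, the effective radius $R$, $M_L$, and $\mu_d$ — stays bounded or scales correctly as $\mu_f=\e/R^2\to 0$, so that the only new factor introduced is the expected $\frac{M_f}{\mu_f}=O(1/\e)$ blow-up of the condition number. Carrying this out cleanly, together with confirming that the accuracy schedules $\{\eta_t\}$, $\{\epsilon_t\}$ and $\check\eta$ prescribed by \Cref{alg:base} remain admissible for $\hat f$, is the part requiring the most care; the reduction itself is otherwise a standard regularization argument.
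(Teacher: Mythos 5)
Your proposal is correct and follows essentially the same route as the paper: regularize with $\mu_f=\e/R^2$, invoke the strongly-convex guarantees of \Cref{theorem:str_cvx_conv} and \Cref{corr:sample_complexity} for the surrogate, convert $\e/2$-accuracy on $\hat f$ into $\e$-accuracy on $f$ via $\frac{\mu_f}{2}\|x_0-x^*\|^2\le\e/2$, and inherit safety from SCSA. Your bookkeeping of how $\check\lambda$, $R$, $M_L$, and $\mu_d$ scale as $\mu_f\to 0$ is actually more explicit than the paper's, which simply plugs the parameters into the strongly-convex complexity bound.
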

For the proof see Appendix \ref{appendix:proof:thm:cvx}.


\section{Non-convex Case} \label{section:non-convex}
In this section, we address the setting of safe optimization of general non-convex and smooth objective and constraints; our performance measure is the an $(\e_p,\e_c)$-approximate KKT condition. 
First, let us make an additional regularity assumption, which is an extension of Mangasarian-Fromovitz Constraint Qualification (MFCQ):
\begin{assumption}\label{assumption:mfcq}
   There exists $\theta\in (0,\frac{\beta}{2}]$ such that for any point in $\{x\in \mathcal X|g(x)\geq -\theta\}$ there exists a constant $l>0$, such that  $\|\nabla g(x)\|\geq l.$
\end{assumption}
The classic MFCQ \citep{MANGASARIAN196737} is the regularity assumption on the constraints, guaranteeing that they have a uniform descent direction for all constraints at a local optimum. 
\Cref{assumption:mfcq}
guarantees $g(x)$ is decreasing towards the middle of the set  
at all points $\theta$-close to the boundary; 
it is the same as in \citet{usmanova2023log}.

  To address the non-convex case, first note that smoothness of $f(x)$ and $g(x)$ 
  implies their \emph{weak convexity}. That is, there exists such constant $\rho_f>0$ such that $f(x) + \frac{\rho_f}{2}\|x - x_0\|^2$ becomes convex, same with the constraint function.
    Then, we propose to use smoothing technique with the moving regularizer, inspired by 
    \citet{zhang2020singleloop}. 
    In particular, 
    at each step $k$ consider regularized problem:
    \begin{align}\label{sub-problem:nonconvex}
        \min_{x\in\R^d}~ & f(x) + \frac{\rho_f}{2}\|x - x_{k-1}\|^2\tag{$\mathcal P_{x_{k-1}}$},\\ 
        \text{ s.t. } &  g(x) + \frac{\rho_g}{2}\|x - x_{k-1}\|^2 \leq 0\nonumber
    \end{align} 
    with the corresponding Lagrangian $$\mathcal K_{x_{k-1}}(x, \lambda):= f(x) + \frac{\rho_f}{2}\|x - x_{k-1}\|^2 + \lambda \left(g(x) + \frac{\rho_g}{2}\|x - x_{k-1}\|^2\right)$$
    with regularizer parameters $\rho_f > M_f $ and $\rho_g > M_g$, so that both regularized objective and constraint functions are $\mu_i = \rho_i - M_i$-strongly convex and $\rho_i + M_i$-smooth for $i\in\{f,g\}$. 
    Note that $\mathcal K_{x_{k-1}}(\cdot, \lambda)$ is also $\mu_K = \rho_f - M_x + \lambda( \rho_g - M_g)$ strongly-convex, and $M_K = \rho_f + M_f + \lambda(\rho_g+M_g)$-smooth respectively. 
    
    We define the detailed updates in \Cref{alg:non-convex}. 
\textcolor{black}{Here, $\beta_k$ is a lower bound $\beta$ for problem $\mathcal P_{x_{k-1}}$, i.e.,   
 $ -\min_{x\in\R^d}\{g(x) + \frac{\rho_g}{2}\|x - x_{k-1}\|^2\} \geq \beta_k := -\hat g(x_{k-1})$, and $\text{SCSA}(\mathcal P_{x_{k-1}})$ means SCSA (Alg. \ref{alg:base})  applied to problem $\mathcal P_{x_{k-1}}$, and initialized directly with  primal-dual pair $(\check x_k, \check \lambda_k)$.}
    \begin{algorithm}
    [t]
    \caption{Safe Primal-Dual Method
    (SafePD)}\label{alg:non-convex}
    \begin{algorithmic}
    \item[0.]\emph{Preliminary step:}
    Set $\check \lambda_1 \leftarrow \frac{\Delta_f}{-\hat g(x_0)}$ 
    \FOR{$k\in\{1,\ldots,K\}$ }
    \item [1.] $\check x_{k} \leftarrow \check{\mathcal A}(\mathcal K_{x_{k-1}}(\cdot, \check \lambda_k),\R^d, x_{k-1})$ (to $ \check \eta_{k} \leq \frac{\mu_f \beta_k^2}{8L_g^2}$)
    \item 
    [2.] $(x_k,\lambda_k) \leftarrow \text{SCSA}(\mathcal P_{x_{k-1}}) 
    $, Alg. \ref{alg:base} init. at $(\check x_{k},\check \lambda_{k})$, and 
      solved up to accuracy $(\e_{p},\e_{c}) $
        \item 
        [3.]  \textbf{If} {$\|x_{k-1} - x_{k}\|\leq \min\left\{ \frac{\textcolor{black}{\e_{p}}}{\rho_f + \check\lambda_{k}\rho_g},\sqrt{\frac{2\e_{c}}{\check\lambda_{k}\rho_g}}\right\}$ } 
        \textbf{then} break
    \item[4.] \label{eq:lambda_warm}
        $\hat\lambda_{k+1} \leftarrow  2\lambda_{k} + \frac{\rho_f}{\rho_g} + \frac{\eta_k}{-g(x_{k})} + \frac{\hat \eta_{k+1}}{-g(x_{k})}$
    \ENDFOR
    \item{Return: $(x_K,\lambda_K)$}
    \end{algorithmic}
    \end{algorithm}
    \textcolor{black}{Note that 
    for sub-problem $\mathcal P_{x_{k-1}}$,
    we need $\check \lambda_k$ to be big enough that $g(\check x_k) + \frac{\rho_g}{2}\|\check x_k - x_{k-1}\|^2 \leq 0$, i.e.,  $x_{\check \lambda_k}$ is feasible subject to a regularized constraint. 
    It appears that we can set $\check \lambda_k$ according to Step \ref{eq:lambda_warm}  using the relation of the subsequent subproblems. This provides us with a less conservative warm start, compared to the original SCSA (Alg. \ref{alg:base}) initialization that would require 
    $\check \lambda_k \geq \frac{f(x_{k-1}) - \min_{x \in \mathcal X_k}f(x)}{-g(x_{k-1})}.$}
    Then, we can obtain the following convergence and sample complexity guarantees, while ensuring safety of all the updates.

    \begin{theorem}\label{theorem:noncvx_conv}
    Consider problem (\ref{problem}) under Assumptions \ref{assumption:safe_init}, \ref{assumption:smoothness}, \ref{def:beta} with non-convex objective and constraint, and let Assumption \ref{assumption:mfcq} hold for some constant $\theta>0$. Then, \Cref{alg:non-convex} stops after at most $K = O\left(\frac{1}{\e^2}
    \right)$ outer iterations reaching $(\e, \e)$-approximate KKT point with probability $1-\de K$. In total, it requires $\mathcal N = O\left(\frac{1}{\e^6\theta}\right)$ samples. Moreover, all iterates are feasible with probability $1-\de K$. 
    \end{theorem}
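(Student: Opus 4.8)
The plan is to analyze \Cref{alg:non-convex} as an inexact proximal-point method applied to the original non-convex problem, where the outer loop produces a Moreau-type envelope decrease, and each inner call to SCSA (Alg.~\ref{alg:base}) solves a strongly-convex subproblem via the already-established machinery of \Cref{theorem:str_cvx_conv}. First I would observe that the subproblem $\mathcal P_{x_{k-1}}$ has $\mu_f$-strongly-convex objective and convex constraint (since $\rho_f>M_f$, $\rho_g>M_g$), so \Cref{assumption:strong-conv} holds for it; thus every inner SCSA call is well-defined, safe (by the safety theorem), and terminates with an $(\e_p,\e_c)$-approximate KKT pair $(x_k,\lambda_k)$ for $\mathcal K_{x_{k-1}}$. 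The feasibility of all iterates then follows immediately by a union bound over the $K$ outer iterations, each of which is safe with probability $1-\de$, giving the stated $1-\de K$.

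Next I would establish the outer convergence. Define the proximal function $\Phi(x_{k-1}) := \min\{f(x)+\frac{\rho_f}{2}\|x-x_{k-1}\|^2 : g(x)+\frac{\rho_g}{2}\|x-x_{k-1}\|^2\le 0\}$ (a Moreau-envelope-like quantity, finite because $x_{k-1}$ is feasible for $\mathcal P_{x_{k-1}}$). The key inequality is a sufficient-decrease bound: solving $\mathcal P_{x_{k-1}}$ to accuracy $(\e_p,\e_c)$ yields $x_k$ with $f(x_k)+\frac{\rho_f}{2}\|x_k-x_{k-1}\|^2 \le \Phi(x_{k-1}) + O(\e)$ (by $\eta_T$-optimality of the last primal step) and $f(x_k)\le f(x_{k-1})$-type control; summing a telescoping bound on $\frac{\rho_f}{2}\|x_k-x_{k-1}\|^2$ against the decrease of $f$ over $\mathcal X$ (bounded by $\Delta_f$) forces $\min_{k\le K}\|x_k-x_{k-1}\|^2 = O(1/K)$, hence after $K=O(1/\e^2)$ steps the stopping test $\|x_{k-1}-x_k\|\le \min\{\e_p/(\rho_f+\check\lambda_k\rho_g),\sqrt{2\e_c/(\check\lambda_k\rho_g)}\}$ fires. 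The warm-start formula in Step~4 must be shown to keep $\check\lambda_{k+1}$ large enough that $x_{\check\lambda_{k+1}}$ is feasible for the regularized constraint of $\mathcal P_{x_k}$ — this is a short computation relating $\lambda_k$, the inexactness $\eta_k,\hat\eta_{k+1}$, and the change of quadratic regularizer center, using $-g(x_k)>0$ in the denominators and the bound $\lambda^*\le\Delta_f/\beta$ of \Cref{lemma:upper_lambda}.

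Then I would convert the termination condition into an $(\e,\e)$-approximate KKT point for \ref{problem}. At termination we have a near-stationary pair $(x_k,\lambda_k)$ for $\mathcal K_{x_{k-1}}$, i.e. $\|\nabla f(x_k)+\rho_f(x_k-x_{k-1}) + \lambda_k(\nabla g(x_k)+\rho_g(x_k-x_{k-1}))\|\le\e_p$, plus $\lambda_k(-g(x_k)-\frac{\rho_g}{2}\|x_k-x_{k-1}\|^2)\le\e_c$ and $\lambda_k\ge0$, $g(x_k)+\frac{\rho_g}{2}\|x_k-x_{k-1}\|^2\le 0$. The small-step condition $\|x_k-x_{k-1}\|\le\e_p/(\rho_f+\check\lambda_k\rho_g)$ kills the regularizer gradient terms (bounding $\|\rho_f(x_k-x_{k-1})+\lambda_k\rho_g(x_k-x_{k-1})\|\le\e_p$ after using $\lambda_k\le\check\lambda_k$), leaving $\|\nabla_x\mathcal L(x_k,\lambda_k)\|\le 2\e_p = O(\e)$; the condition $\|x_k-x_{k-1}\|\le\sqrt{2\e_c/(\check\lambda_k\rho_g)}$ makes $\lambda_k\cdot\frac{\rho_g}{2}\|x_k-x_{k-1}\|^2\le\e_c$, so $-\lambda_k g(x_k)\le 2\e_c=O(\e)$, and $g(x_k)\le 0$ is inherited from feasibility. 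Choosing $\e_p,\e_c=\Theta(\e)$ gives the $(\e,\e)$-KKT claim.

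Finally, the sample complexity: each inner SCSA call costs, by \Cref{corr:sample_complexity} applied to $\mathcal P_{x_{k-1}}$ with $\mu_f=\Theta(1)$ but inner target accuracy that must scale like $\Theta(\e^2)$ (because $\e_p,\e_c=\Theta(\e)$ enter quadratically through $\eta_T$ and through $n_t$), roughly $\tilde O(1/\e^4)$ oracle calls — here the $\theta$ from \Cref{assumption:mfcq} enters because the dual local strong-concavity constant $\mu_d$ in \Cref{lemma:local_dual_strong_concavity} degrades like $l^2$, and $l$ is controlled only on the $\theta$-band near the boundary, costing a factor $1/\theta$. Multiplying by $K=O(1/\e^2)$ outer iterations gives $\mathcal N = O(1/(\e^6\theta))$. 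I expect the main obstacle to be the bookkeeping that makes the proximal-point telescoping rigorous under inexact inner solves — precisely, showing that the $O(\eta_t)$ and $O(\epsilon_t)$ errors accumulated inside SCSA, together with the warm-start choice of $\check\lambda_k$, do not spoil the per-iteration decrease of the envelope $\Phi$ nor the feasibility of the regularized subproblems; this requires carefully propagating the high-probability events across both loops via Boole's inequality and verifying the Step~4 warm start never underestimates the required dual variable.
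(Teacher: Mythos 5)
Your proposal follows essentially the same route as the paper. The paper's potential is $\phi_k=\mathcal K_{x_{k-1}}(x_k,\lambda_k)$, decomposed into three explicit differences whose middle term $\mathcal K_{x_k}(x_k,\lambda_{k+1})-\mathcal K_{x_{k-1}}(x_k,\lambda_{k+1})=-\tfrac{\rho_f+\lambda_{k+1}\rho_g}{2}\|x_k-x_{k-1}\|^2$ supplies the per-iteration decrease; your envelope $\Phi(x_{k-1})$ combined with weak duality and approximate complementary slackness gives the same decrease up to an $O(\eta_k+\e_c)$ slack, so the two potentials coincide up to the complementary-slackness term. Your conversion of the small-step stopping test into an $(O(\e),O(\e))$-KKT point is exactly the paper's \Cref{lemma:KKT_con_cvx}, and your accounting of where $\theta$ enters (through $\beta_k$ and $\mu_d$ via \Cref{lemma:mud_lb} and \Cref{lemma:betak_lb}) matches the paper.

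One point needs tightening: in your sufficient-decrease inequality you write $f(x_k)+\tfrac{\rho_f}{2}\|x_k-x_{k-1}\|^2\le\Phi(x_{k-1})+O(\e)$. With a per-iteration slack of order $\e$ and $K=O(1/\e^2)$ outer steps, the telescoped bound $\sum_k\tfrac{\rho_f}{2}\|x_k-x_{k-1}\|^2\le\Delta_f+K\cdot O(\e)$ only yields $\min_k\|x_k-x_{k-1}\|^2=O(\e)$, which never reaches the stopping threshold $\Theta(\e^2)$ in squared norm. The slack must be $O(\e_c)=O(\e_p^2)=O(\e^2)$ per iteration, i.e.\ the inner subproblems must be solved to complementary-slackness accuracy $\e_c=\Theta(\e^2)$ and Lagrangian-value accuracy $\eta=\Theta(\e^2)$; this is precisely the condition the paper imposes in Eq.~(\ref{eq:e_k}) (requiring $\e_c\lesssim\|x_k-x_{k-1}\|^2$ before stopping). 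You do acknowledge the $\Theta(\e^2)$ inner accuracy in your sample-complexity paragraph, so this is a matter of consistency rather than a missing idea, but as stated the decrease inequality would not close the argument. The remaining item you defer --- verifying that the warm start $\check\lambda_{k+1}=2\lambda_k+\rho_f/\rho_g+\dots$ keeps the regularized subproblem's Lagrangian minimizer feasible and that descent on $\mathcal K_{x_k}(\cdot,\check\lambda_{k+1})$ preserves feasibility w.r.t.\ $g$ --- is substantially longer than ``a short computation'' (the paper devotes two appendix lemmas to it), but the ingredients you list are the right ones.
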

    The proof can be found in the Appendix \ref{proof:theorem:noncvx_conv}. 
    
\section{Extension to Multiple Safety Constraints}\label{section:extension}
Extending the method to multiple constraints $g_i(x) \leq 0 \forall i \in \{1,\ldots,m\}$ is  possible by replacing them with the max of all the constraints $g(x):= \max\{g_i(x)\}$ and applying any smoothing technique to the resulting function $g(x)$, such as randomized smoothing \citep{duchi2012randomizedsmoothingstochasticoptimization}. 
The convergence rate however worsens by a factor of $\frac{1}{\e}$ due to trade-off between the approximation quality and and smoothness constant. 
Indeed, the smoothed constraint $ g_{\nu}(x)$ with parameter $\nu>0$ is $O(\nu)$-approximation of the original constraint $g$ and it is $M_{\nu}$ smooth with $ M_{\nu} = O(\frac{1}{\nu}).$ \citep{duchi2012randomizedsmoothingstochasticoptimization}
    Therefore, we need $\nu = O(\e_c)$ to guarantee the required solution accuracy $- g(x)\lambda \leq \e_c$.
    Then, local strong concavity of the dual function $\mu_d = \frac{l^2}{M_f + \lambda M_{\nu}} = \Omega(\e_c)$. That is, the total complexity is increased to $T = \tilde O(\frac{1}{\e^3})$ for strongly convex case, and $N = \tilde O(\frac{1}{\e^5})$ for convex case, and $N = \tilde O(\frac{1}{\e^7} )$ for non-convex case. 
    Note that these bounds are still better than the previous bounds, for strongly-convex and convex cases, and similar to the baseline for non-convex case.

\section{Empirical Evaluations}
We compare LB-SGD \citep{usmanova2020safe} and SCSA (Alg. \ref{alg:base}) on a synthetic strongly-convex problem with black-box stochastic feedback. In particular, we consider 
   $ \min \|x - x_0\|^2,  
    \text{ s.t.} \|Ax - b\|^2-4\leq 0,$
with $A = [[I_{d-1},\bar{0}],[0,..,0,2]], b = [0,...,0,1]$ and $x_0 = [0,..,0,5].$ We consider noisy zeroth-order feedback with standard deviation $\sigma  =  0.01, 0.1$ and dimension $d = 2$. The shady area is the area between the max. and min. values over 10 runs.  The gradients are approximated by finite differences, similarly to \citet{usmanova2023log}. \textcolor{black}{Our experiments  demonstrate sensitivity of the LB-SGD to the increasing noise level, compared to our method. Note that when the noise is very small 
$ \sigma \in {0.01} $, the setting is very close to the noise-free setting, for which SGD used in our method is non-optimal. For larger $\sigma = 0.1$, the clear improvement can be seen. From the confidence-interval shades, one can also see SafePD is more stable. The complexity of SafePD can also be improved to $O\left(\log^2\frac{1}{\epsilon}\right)$ for the strongly-convex problems in the exact information case, since linear convergence can be achieved for the internal problems.}  
\begin{figure}
    \centering
\includegraphics[width=0.9\linewidth]{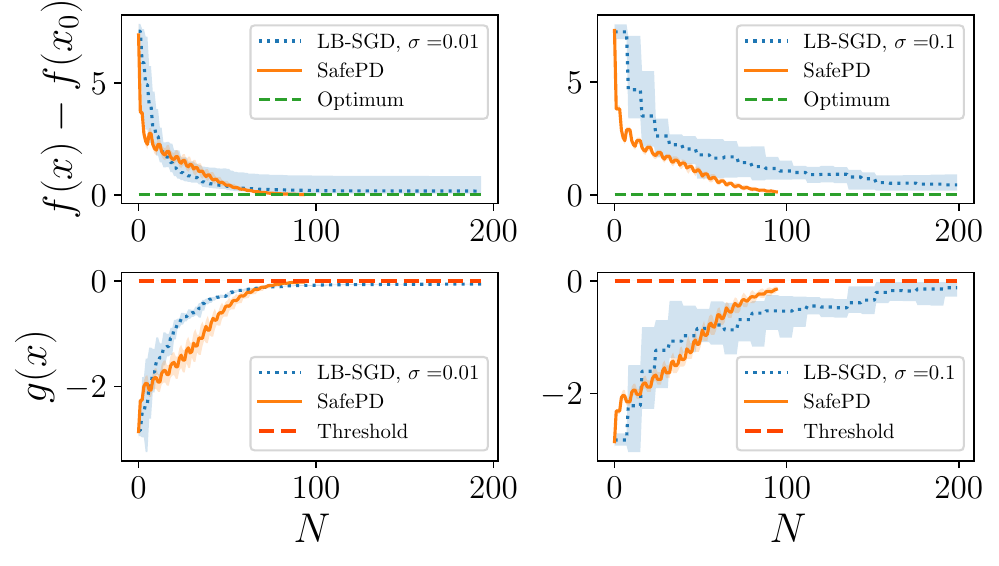}
    \caption{Comparison of SafePD (SCSA, Algorithm \ref{alg:base}), and LB-SGD. 
    }
    \label{fig:3}
\end{figure}
See Appendix \ref{experiments_non-convex} for non-convex experiments. Note that the experiments are only illustrative and the main focus of the paper is theoretical.


\section*{Acknowledgments and Disclosure of Funding} 
We thank Anton Rodomanov for a helpful discussion. This research was partially supported by Israel PBC- VATAT, by the Technion Artificial Intelligent Hub (Tech.AI), and by the Israel Science Foundation (grant No. 3109/24).




\bibliography{bibliography}

\clearpage
\appendix
\onecolumn
\section{Non-Convex Experiments}\label{experiments_non-convex}
Here, we demonstrate the performance of our method compared to LB-SGD on the following non-convex problem of minimising inverted Gaussian over a quadratic constraint.
\begin{align}
    \min_x & \exp^{-4 \|x\|^2_2}\\
    \text{s.t. }& 0.2 \|x - x_0\|^2_2 + 10. * (x[1] - x_0[1])^2 \leq r^2
\end{align} 
 with $r = 0.5$,  $x_0 = \frac{1}{\sqrt{d}} [1,\ldots,1],$ and $x[1]$ is the second component of vector $x$. 
 \begin{figure}[h]
    \centering
\includegraphics[width = \textwidth]{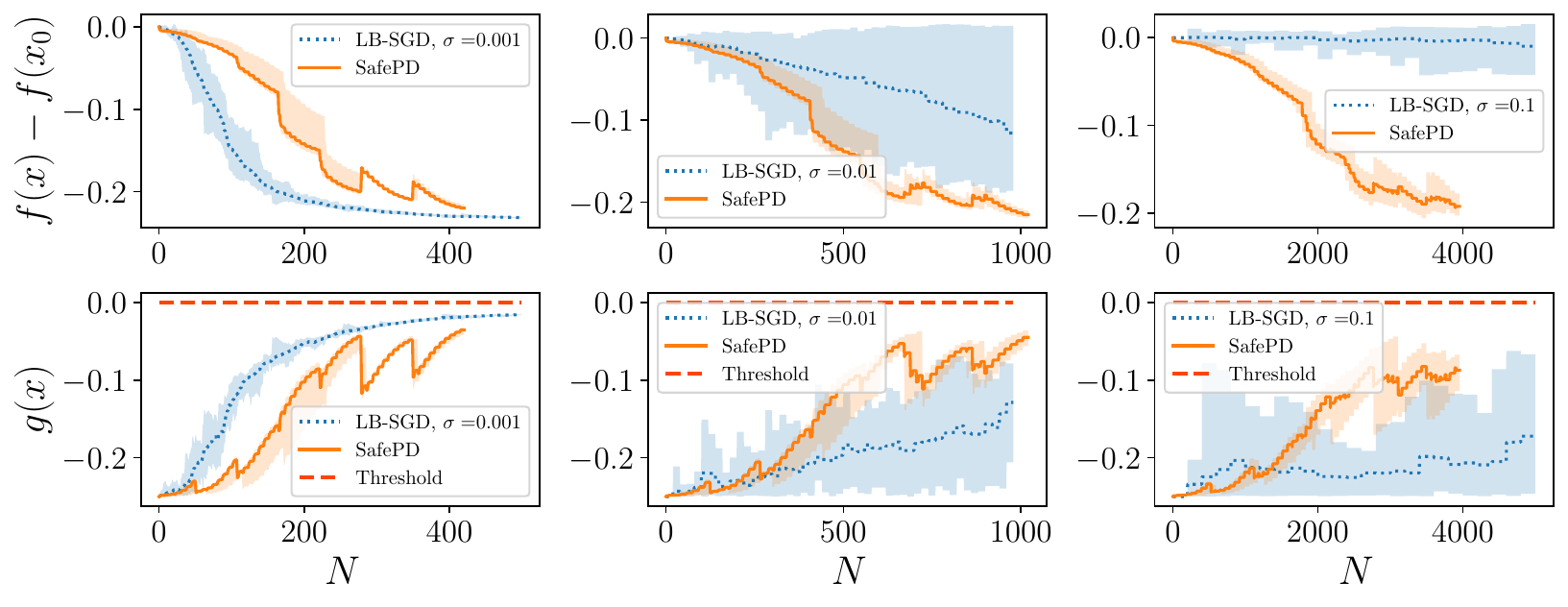}
 \caption{Comparison of SafePD (Alg. \ref{alg:non-convex}), and LB-SGD, on a non-convex problem. 
 }
    \label{fig:non-convex}
\end{figure}

 Here, we again observe that LB-SGD with higher noise gets less stable and slower in practice.
 \textcolor{black}{The goal of our experiments is to demonstrate sensitivity of the LB-SGD to increasing the noise level, compared to our method. Note that when the noise variance compared to the target accuracy is very small $ \sigma \in {0.001, 0.01} $, the given settings are very close to the noise-free setting. In the noise-free setting, the sample complexity of LB-SGD is $O\left(\frac{1}{\epsilon^2}\right)$ gradient steps for strongly-convex case, and $O\left(\frac{1}{\epsilon^3}\right)$ gradient steps for non-convex case, whereas for non-convex SafePD according to \Cref{corr:sample_complexity}, the sample complexity is still $O \left( \frac{1}{\epsilon^2} \right)$ for the strongly-convex case and $O \left( \frac{1}{\epsilon^6} \right)$ for the non-convex case, since in the experiments we use SGD for the internal optimization, which is non-optimal for noise-free setting.
However, note that the complexity of SafePD can be improved to $O\left(\log^2\frac{1}{\epsilon}\right)$ for the strongly-convex problems, $O\left(\frac{1}{\epsilon^4}\right)$ for non–convex problems in the exact information case, since SGD can be replaced then with a more efficient internal optimization algorithm achieving linear convergence rate in the exact information case. We do not use it in the experiments for consistency.} 

\textcolor{black}{
For the case of a higher noise level, we observe that $O\left(\frac{1}{\epsilon^6}\right)$ sample complexity of SafePD clearly outperforms $O \left( \frac{1}{\epsilon^7} \right)$ sample complexity of LB-SGD in practice, what confirms our theoretical findings. And it can also be seen from the confidence-interval shades, that our method is much more stable.}

\section{Additional Proofs}
\subsection{Proof of Lemma \ref{lemma:upper_lambda}}\label{proof:lemma:upper_lambda}
\begin{lemma} 
    There exists a feasible point $\tilde x\in \mathcal X$ such that $-g(\tilde x) \geq \beta > 0 ,$ and $\Delta_f:= \max_{x \in \mathcal X} \{f(x) - f(x^*)\}.$  Then, 
    $\lambda^* \leq \frac{\Delta_f}{\beta}.$
\end{lemma}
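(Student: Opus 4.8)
The statement to prove is the bound $\lambda^* \leq \Delta_f/\beta$ on the optimal dual variable, where $\Delta_f$ upper bounds $f(x)-f(x^*)$ over the feasible set and $\tilde x$ is a strictly feasible point with $-g(\tilde x)\geq \beta$. The natural approach is to exploit weak duality together with the fact that $\tilde x$ is feasible. Concretely, I would start from the dual function $d(\lambda) = \min_{x\in\R^d}\mathcal L(x,\lambda) = \min_{x}\{f(x)+\lambda g(x)\}$ evaluated at the optimal dual variable $\lambda^*$.

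First, by strong duality (which holds in the convex case under Slater's condition, guaranteed by Assumption~\ref{assumption:safe_init} via the existence of $x_0$ with $-g(x_0)\geq\alpha>0$), we have $d(\lambda^*) = f(x^*)$. Second, since $d(\lambda^*)$ is a minimum over all $x\in\R^d$, in particular it is at most the value at the strictly feasible point $\tilde x$:
\begin{align*}
f(x^*) = d(\lambda^*) \leq f(\tilde x) + \lambda^* g(\tilde x).
\end{align*}
Rearranging and using $g(\tilde x) \leq -\beta < 0$ gives $\lambda^*(-g(\tilde x)) \leq f(\tilde x) - f(x^*)$, hence $\lambda^* \beta \leq \lambda^*(-g(\tilde x)) \leq f(\tilde x) - f(x^*) \leq \Delta_f$, which yields $\lambda^* \leq \Delta_f/\beta$. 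The same argument with $\tilde x$ replaced by $x_0$ (so $-g(x_0)\geq \alpha$, and $f(x_0)-f(x^*)$ in place of $\Delta_f$) gives the additional claim $\lambda^* \leq (f(x_0)-f(x^*))/(-g(x_0))$.

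I do not expect any serious obstacle here; the only point requiring a little care is justifying the use of strong duality (or, to avoid it entirely, one could work directly with $\lambda^* \geq 0$ and the KKT/complementarity characterization, or simply note that $d(\lambda^*) \geq \min_x\max_{\lambda\geq 0}\mathcal L(x,\lambda)$ is not needed — weak duality $d(\lambda^*)\leq f(x^*)$ suffices in one direction but here we actually need $d(\lambda^*) = f(x^*)$, so strong duality is the clean route). Since the paper is already operating in the convex regime with a Slater point, invoking strong duality is legitimate. The division by $\beta>0$ is valid by Assumption~\ref{def:beta}.
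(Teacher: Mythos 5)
Your proof is correct and follows essentially the same route as the paper: the paper writes the key step as the saddle-point inequality $\mathcal L(\tilde x,\lambda^*)\geq \mathcal L(x^*,\lambda^*)=f(x^*)$, which is exactly your chain $f(x^*)=d(\lambda^*)\leq \mathcal L(\tilde x,\lambda^*)$ with strong duality (equivalently, complementary slackness $\lambda^* g(x^*)=0$) made explicit. The rearrangement $\lambda^*\beta\leq \lambda^*(-g(\tilde x))\leq f(\tilde x)-f(x^*)\leq \Delta_f$ matches the paper's conclusion.
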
  
\begin{proof}
    Let $(x^*,\lambda^*) = \min_{x\in\mathcal X}\max_{\lambda\geq 0}\mathcal L(x,\lambda).$ Then, for  feasible point $\tilde x$ we have 
    $f(\tilde x) - f(x^*) \geq \lambda^* (-g(\tilde x))$, that follows directly from $ \mathcal L(\tilde x,\lambda^*) \geq \mathcal L(x^*,\lambda^*).$ 
    It implies $\lambda^*$ is upper bounded by 
    $\lambda^* \leq \frac{f(\tilde x) - f(x^*)}{- g(\tilde x)} \leq \frac{\Delta_f}{\beta}.$
\end{proof}
\subsection{Proof of Lemma \ref{lemma:dual-smoothness}}\label{proof:lemma:dual-smoothness}
\begin{lemma}
    If $f(x)$ is $\mu_f$-strongly-convex, and $g(x)$ is convex and $L_g$-Lipschitz-continuous, then the dual function $d(\lambda)$ is $\frac{2 L_g^2}{\mu_f}$-smooth. 
\end{lemma}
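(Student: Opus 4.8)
\textbf{Proof plan for Lemma~\ref{lemma:dual-smoothness}.}

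The plan is to show that $\lambda \mapsto x_\lambda$ is Lipschitz in $\lambda$ with constant $L_g/\mu_f$, and then to combine this with the Danskin-type formula $\nabla d(\lambda) = g(x_\lambda)$ together with the $L_g$-Lipschitz continuity of $g$. First I would recall that because $f$ is $\mu_f$-strongly convex and $g$ is convex, the Lagrangian $\mathcal L(\cdot,\lambda) = f + \lambda g$ is $\mu_f$-strongly convex in $x$ for every $\lambda \geq 0$, so the minimizer $x_\lambda = \arg\min_x \mathcal L(x,\lambda)$ is unique and well-defined, and $d$ is differentiable with $\nabla d(\lambda) = g(x_\lambda)$ (the envelope/Danskin theorem; uniqueness of the inner minimizer is what makes $d$ smooth rather than merely concave).

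The core step is the bound $\|x_{\lambda_1} - x_{\lambda_2}\| \leq \frac{L_g}{\mu_f}|\lambda_1 - \lambda_2|$. I would get this from first-order optimality: $\nabla f(x_{\lambda_1}) + \lambda_1 \nabla g(x_{\lambda_1}) = 0$ and $\nabla f(x_{\lambda_2}) + \lambda_2 \nabla g(x_{\lambda_2}) = 0$. Taking the inner product of the difference of these two identities with $x_{\lambda_1} - x_{\lambda_2}$, the term $\langle \nabla f(x_{\lambda_1}) - \nabla f(x_{\lambda_2}), x_{\lambda_1}-x_{\lambda_2}\rangle \geq \mu_f \|x_{\lambda_1}-x_{\lambda_2}\|^2$ by strong convexity, and the term $\langle \lambda_1(\nabla g(x_{\lambda_1}) - \nabla g(x_{\lambda_2})), x_{\lambda_1}-x_{\lambda_2}\rangle \geq 0$ by convexity of $g$ (monotonicity of $\nabla g$); what is left is $(\lambda_1 - \lambda_2)\langle \nabla g(x_{\lambda_2}), x_{\lambda_1} - x_{\lambda_2}\rangle$, whose absolute value is at most $|\lambda_1-\lambda_2|\,\|\nabla g(x_{\lambda_2})\|\,\|x_{\lambda_1}-x_{\lambda_2}\| \leq L_g |\lambda_1-\lambda_2|\,\|x_{\lambda_1}-x_{\lambda_2}\|$ using $\|\nabla g\|\leq L_g$. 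Rearranging gives the claimed Lipschitz bound on $x_\lambda$.

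Finally, $|\nabla d(\lambda_1) - \nabla d(\lambda_2)| = |g(x_{\lambda_1}) - g(x_{\lambda_2})| \leq L_g \|x_{\lambda_1} - x_{\lambda_2}\| \leq \frac{L_g^2}{\mu_f}|\lambda_1 - \lambda_2|$, which already yields $\frac{L_g^2}{\mu_f}$-smoothness; the factor $2$ in the statement is just slack (or accounts for the possibility that the effective gradient includes the projection onto $\lambda \geq 0$ / a subgradient selection at non-differentiable points, where a factor of $2$ is a safe conservative constant). I do not expect a serious obstacle here; the only mild care point is justifying differentiability of $d$ and the formula $\nabla d(\lambda)=g(x_\lambda)$ cleanly — this rests on uniqueness of $x_\lambda$, which strong convexity of $f$ guarantees, so I would state it as a consequence of the standard envelope theorem rather than reprove it.
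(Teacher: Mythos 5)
Your proposal is correct, and it takes a genuinely different (and slightly sharper) route than the paper. The paper works entirely with function values: it bounds $\frac{\mu_f}{2}\|x_{\lambda_1}-x_{\lambda_2}\|^2$ by $\La(x_{\lambda_2},\lambda_1)-\La(x_{\lambda_1},\lambda_1)$, rewrites this difference in terms of $\La(\cdot,\lambda_2)$ plus the cross term $(\lambda_1-\lambda_2)(g(x_{\lambda_2})-g(x_{\lambda_1}))$, drops the nonpositive part, and then closes the loop with the Lipschitz bound $|g(x_{\lambda_1})-g(x_{\lambda_2})|\leq L_g\|x_{\lambda_1}-x_{\lambda_2}\|$; the factor $2$ from $\frac{\mu_f}{2}$ survives into the final constant $\frac{2L_g^2}{\mu_f}$. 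You instead subtract the two first-order optimality conditions and pair with $x_{\lambda_1}-x_{\lambda_2}$, using strong monotonicity of $\nabla f$ and monotonicity of $\lambda_1\nabla g$ (valid since $\lambda_1\geq 0$), which directly gives $\|x_{\lambda_1}-x_{\lambda_2}\|\leq \frac{L_g}{\mu_f}|\lambda_1-\lambda_2|$ and hence the constant $\frac{L_g^2}{\mu_f}$ — better by a factor of $2$, confirming your observation that the $2$ in the statement is slack rather than necessary. The trade-off is that your argument uses differentiability of $f$ and $g$ and the stationarity of the unconstrained minimizer (both available here, since the functions are smooth and the inner minimization is over $\R^d$), whereas the paper's value-based argument would survive even for nonsmooth convex $g$. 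One shared caveat, not specific to your proof: Assumption 1 only states $L_g$-Lipschitz continuity of $g$ on $\mathcal X$, while both arguments apply the bound $\|\nabla g(x_\lambda)\|\leq L_g$ (or the Lipschitz estimate between minimizers) at points $x_\lambda$ that need not lie in $\mathcal X$; this is a minor hypothesis mismatch in the paper itself rather than a gap you introduced.
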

\begin{proof}
    Since for every $\lambda>0$ the minimizer of $\min_{x\in \R^d}\La(x,\lambda)$ is unique $x_{\lambda}^*$, the dual function is differentiable with $\nabla d(\lambda) = g(x_{\lambda}^*).$  Next, due to the Lipschitz continuity, we have
    \begin{align*}
        &|g(x_{\lambda_1}^*) - g(x_{\lambda_2}^*)| \leq L_g\|x_{\lambda_1}^* - x_{\lambda_2}^*\|.\\
        &|g(x_{\lambda_1}^*) - g(x_{\lambda_2}^*)|^2 \leq L_g^2\|x_{\lambda_1}^* - x_{\lambda_2}^*\|^2
    \end{align*}
    Then, note that due to $\mu_f$-strong convexity of $\La(\cdot,\lambda_1)$, we have: 
    \begin{align}\label{eq:01}
        \frac{\mu_f}{2}\|x_{\lambda_1}^* - x_{\lambda_2}^*\|^2 & \leq \La(x_{\lambda_2}^*,\lambda_1) - \La(x_{\lambda_1}^*,\lambda_1) \nonumber\\
        & = \La(x_{\lambda_2}^*,\lambda_2) - \La(x_{\lambda_1}^*,\lambda_2) + (\lambda_1- \lambda_2)(g(x_{\lambda_2}^*) - g(x_{\lambda_1}^*)) \nonumber\\
        & \leq 0 + |\lambda_1- \lambda_2| |g(x_{\lambda_2}^*) - g(x_{\lambda_1}^*)|. 
    \end{align}
    Thus, 
     \begin{align*}
        &|g(x_{\lambda_1}^*) - g(x_{\lambda_2}^*)|^2 \leq L_g^2\|x_{\lambda_1}^* - x_{\lambda_2}^*\|^2 \leq \frac{2L_g^2}{\mu_f}|\lambda_1- \lambda_2| |g(x_{\lambda_1}^*) - g(x_{\lambda_2}^*)|,
    \end{align*}
    which directly implies
    \begin{align}\label{eq:dual_smoothness}
        &\|\nabla d(\lambda_1) - \nabla d(\lambda_2)\| \leq \frac{2L_g^2}{\mu_f}|\lambda_1- \lambda_2|.
    \end{align}
    \end{proof}

    \subsection{Proof of Lemma \ref{lemma:local_dual_strong_concavity}}\label{proof:lemma:local_dual_strong_concavity}
    \begin{lemma}
Problem (\ref{problem}) has a    \emph{$\mu_d$-locally strongly-concave} dual function with 
    \begin{align}
    \mu_d &= \frac{l^2}{M_f+\lambda M_g}
\end{align}
for all $\lambda$ such that  $\|\nabla g(x_{\lambda})\|\geq l>0$. For convex $g$, it implies
    $\mu_d \geq  \frac{\beta^2}{4R^2(M_f+\lambda M_g)}$
for all $\lambda$ such that  $g(x_{\lambda})\geq -\beta/2$, where $\beta := -\min_{x\in \mathcal X}\{g(x)\}$ if it exists, or a lower bound corresponding to some feasible $\tilde x$  such that  $-g(\tilde x) \geq \beta$.   
\end{lemma}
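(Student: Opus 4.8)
The plan is to compute (a lower bound on) the second derivative of the dual function $d(\lambda)$ at a point $\lambda$ whose primal minimizer $x_\lambda$ satisfies $\|\nabla g(x_\lambda)\| \geq l$, and show it is at most $-\mu_d$ with $\mu_d = l^2/(M_f + \lambda M_g)$. First I would recall that since $\La(\cdot,\lambda)$ is strongly convex, the minimizer $x_\lambda$ is unique and $d$ is differentiable with $\nabla d(\lambda) = g(x_\lambda)$, and moreover (by the implicit function theorem, using that $\nabla^2_{xx}\La(x_\lambda,\lambda) = \nabla^2 f(x_\lambda) + \lambda \nabla^2 g(x_\lambda) \succ 0$) the map $\lambda \mapsto x_\lambda$ is differentiable. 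Differentiating the first-order optimality condition $\nabla f(x_\lambda) + \lambda \nabla g(x_\lambda) = 0$ in $\lambda$ gives $\big(\nabla^2 f(x_\lambda) + \lambda \nabla^2 g(x_\lambda)\big)\, \tfrac{dx_\lambda}{d\lambda} + \nabla g(x_\lambda) = 0$, hence $\tfrac{dx_\lambda}{d\lambda} = -H^{-1}\nabla g(x_\lambda)$ where $H := \nabla^2 f(x_\lambda) + \lambda\nabla^2 g(x_\lambda)$.

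Then $d''(\lambda) = \tfrac{d}{d\lambda} g(x_\lambda) = \langle \nabla g(x_\lambda), \tfrac{dx_\lambda}{d\lambda}\rangle = -\langle \nabla g(x_\lambda), H^{-1}\nabla g(x_\lambda)\rangle$. Since $f$ is $M_f$-smooth and $g$ is $M_g$-smooth (and convex, so $\nabla^2 g \succeq 0$), we have $H \preceq (M_f + \lambda M_g) I$, which implies $H^{-1} \succeq \tfrac{1}{M_f + \lambda M_g} I$, and therefore $d''(\lambda) \leq -\tfrac{\|\nabla g(x_\lambda)\|^2}{M_f + \lambda M_g} \leq -\tfrac{l^2}{M_f + \lambda M_g}$. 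This establishes the local strong concavity with the claimed $\mu_d$; to be careful I would phrase it as: for every $\lambda$ in an interval on which $\|\nabla g(x_\lambda)\| \geq l$, $d$ is $\mu_d$-strongly concave there.

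For the second claim (convex $g$), I need a lower bound $l$ on $\|\nabla g(x_\lambda)\|$ at points where $g(x_\lambda) \geq -\beta/2$. Here I would use convexity of $g$ together with the existence of $\tilde x$ with $g(\tilde x) \leq -\beta$: by the subgradient inequality, $g(\tilde x) \geq g(x_\lambda) + \langle \nabla g(x_\lambda), \tilde x - x_\lambda\rangle$, so $\langle \nabla g(x_\lambda), x_\lambda - \tilde x\rangle \geq g(x_\lambda) - g(\tilde x) \geq -\beta/2 + \beta = \beta/2$. By Cauchy–Schwarz, $\|\nabla g(x_\lambda)\| \cdot \|x_\lambda - \tilde x\| \geq \beta/2$, and since $\|x_\lambda - \tilde x\| \leq \|x_\lambda - x_0\| + \|x_0 - \tilde x\| \leq 2R$ (using that both $x_\lambda$ and $\tilde x$ lie within distance $R$ of the optimum, or directly that the relevant iterates stay in a ball of radius $2R$), we get $\|\nabla g(x_\lambda)\| \geq \beta/(4R)$. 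Plugging $l = \beta/(4R)$ into the first part yields $\mu_d \geq \tfrac{\beta^2}{16 R^2 (M_f + \lambda M_g)}$; I would reconcile the constant with the stated $\tfrac{\beta^2}{4R^2(M_f+\lambda M_g)}$ by checking whether the intended bound on $\|x_\lambda - \tilde x\|$ is $R$ rather than $2R$ (e.g. if $\tilde x$ is taken near $x_0$ and the radius bound is stated as $R$ from the start), in which case $l = \beta/(2R)$ and the constant matches exactly.

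The main obstacle I anticipate is the differentiability / regularity bookkeeping: justifying that $\lambda\mapsto x_\lambda$ is $C^1$ (needs $\nabla^2 g$ to exist, which smoothness gives only in a Lipschitz-gradient sense, so strictly one may want to argue via second-order sufficient conditions or a limiting/approximation argument), and making precise the domain on which the local strong concavity holds — i.e. that the set of $\lambda$ with $g(x_\lambda) \geq -\beta/2$ is an interval (which follows from monotonicity of $\lambda\mapsto g(x_\lambda) = \nabla d(\lambda)$, itself a consequence of concavity of $d$). The algebra bounding $\|x_\lambda - \tilde x\|$ by a multiple of $R$ is routine once the right constant convention is fixed.
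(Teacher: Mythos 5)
Your proposal is correct and follows essentially the same route as the paper: bound the dual Hessian $d''(\lambda) = -\nabla g(x_\lambda)^\top H^{-1}\nabla g(x_\lambda)$ with $H \preceq (M_f+\lambda M_g)I$ (the paper simply cites this Hessian formula from Bertsekas rather than re-deriving it via the implicit function theorem), then lower-bound $\|\nabla g(x_\lambda)\|$ by convexity against $\tilde x$. Your factor-of-two concern is resolved exactly as you guessed: the paper takes $\|x_\lambda - \tilde x\| \leq R$ directly as a diameter bound, giving $l = \beta/(2R)$ and the stated constant.
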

\begin{proof}
    By equation
(6.9), page 598 in \citep{bertsekas1999}, the Hessian of dual function is:
\begin{align}
    \nabla^2_{\lambda}d(\lambda) & = -\nabla g(x_{\lambda})^T(\nabla^2 f(x_{\lambda}) + \lambda \nabla^2 g(x_{\lambda}))^{-1}\nabla g(x_{\lambda})
\end{align}
Note that in our case of a single constraint, $d(\lambda)$ is a one-dimensional function, and the Hessian is just a scalar. Since $f$ is $\mu_f$-strongly-convex, i.e., $ \mu_f \preceq \nabla^2 f(x_{\lambda}) \preceq M_f$, and $g$ is convex,  i.e., $ \mu_f \preceq \nabla^2 f(x_{\lambda}) + \lambda \nabla^2 g(x_{\lambda}) \preceq M_f + \lambda M_g$, then  $\nabla^2_{\lambda}d(\lambda) \preceq -\frac{1}{M_f + \lambda M_g}\|\nabla g(x_{\lambda})\|^2.$ Hence, for all $\lambda$ such that $\|\nabla g(x_{\lambda})\| \geq l$, we get $\nabla^2_{\lambda}d(\lambda) \preceq-\frac{l^2}{M_f + \lambda M_g}.$ 

For convex $g(x),$ we can lower bound the norm of its gradient  on the set $g(x)\geq -\frac{\beta}{2}$. Indeed, for any point $x\in \mathcal X$ 
 and for convex constraint $g$ such that $g(x)\geq - \frac{\beta}{2}$, due to convexity we have  
    $
         g(x_{\lambda}) - g(\tilde x) \leq  \la\nabla g(x),   x - \tilde x\ra.$ Given the bounded diameter of the set   $\|\tilde x - x\|\leq R$, we get 
         $\|\nabla g(x_{\lambda})\|\geq \la\nabla g(x_{\lambda}),  
        \frac{x_{\lambda} - \tilde x}{\|x_{\lambda} - \tilde x\|}\ra \geq \frac{g(x_{\lambda}) - g(\tilde x)}{R} \geq \frac{\beta}{2R}$. 
    Therefore, $\nabla^2_{\lambda}d(\lambda) \preceq-\frac{\beta^2}{4R^2(M_f + \lambda M_g)}.$ \\
\end{proof}

\subsection{Proof of Lemma \ref{lemma:safe_init}.}\label{proof:lemma:safe_init}
\begin{proof}
    Let $\{x_t\}$ be a sequence for minimizing $\mathcal L(x,\check\lambda)$ such that $\mathcal L(x_t,\check\lambda) \leq\mathcal L(x_0,\check\lambda).$ That means:
    \begin{align*}
       f(x_t) + \check\lambda g(x_t)& \leq f(x_0) + \check\lambda g(x_0).\\
         g(x_t)& \leq \frac{f(x_0) - f(x_t)}{\check\lambda}  + g( x_0).
    \end{align*}
    To guarantee feasibility of all the iterates, i.e.,  $\forall t\geq T,  g(x_t) \leq 0$, it is enough to show $\frac{f( x_0) - f(x_t)}{\check \lambda}  + g( x_0) \leq 0$. 
    Thus, by choosing $\check\lambda = \frac{\Delta_f}{\alpha} \geq \frac{f(x_0) - \min_x f(x)}{-g(x_0)} ,$ we imply $g(x_t)\leq 0$ for all $x_t$ as long as $\mathcal L(x_t, \check\lambda) \leq \mathcal L( x_0,\check\lambda).$
\end{proof}

\subsection{Proof of Lemma \ref{lemma:dual_upd}}\label{proof:lemma:dual_upd}

 
\begin{proof}
Knowing that $\La(x_t,\lambda_t) - \La(x_{\lambda_t},\lambda_t) \leq \eta$, and using strong convexity of $\La(\cdot, \lambda_t)$ we can see $
    \frac{\mu_f}{2} \| x_{\lambda_t} -  x_{t}\|^2  \leq \eta_t
.$ Then, from triangle inequality, we get:
\begin{align}\label{eq:triangle}
    \| x_{\lambda_{t+1}} -  x_{t}\|& \leq \| x_{\lambda_{t+1}} -  x_{\lambda_t}\| + \| x_{\lambda_t} -  x_{t}\| 
    \leq \sqrt{\frac{\eta_t}{\mu_f}} + \| x_{\lambda_{t+1}} -  x_{\lambda_t}\|.
\end{align}
Recall that due to $\mu_f$-strong convexity of $\La(\cdot,\lambda_{t+1})$, similarly to \Cref{eq:01} in the proof of \Cref{lemma:dual-smoothness} we get: 
    \begin{align*}
        \frac{\mu_f}{2}\|x_{\lambda_{t+1}} - x_{\lambda_t}\|^2 
        & \leq \la \lambda_{t}- \lambda_{t+1},  g(x_{\lambda_{t+1}}) - g(x_{\lambda_t})\ra. 
    \end{align*}
Note that using the gradient direction $\lambda_{t+1}^i = \max\{\lambda_t^i + \gamma_t g_i(x_t), 0\}$, we always decrease (non-increase) the dual variables corresponding to the safety constraints since for our method we guarantee $g_i(x_t) \leq 0$ due to the safety assumption (and property that we prove by induction now). Therefore,  $\lambda_{t+1} \leq \lambda_t$. 
Hence,   
\begin{align*}
        \frac{\mu_f}{2}\|x_{\lambda_{t+1}} - x_{\lambda_t}\|^2 
        & \leq \la \lambda_{t}- \lambda_{t+1},  g(x_{\lambda_{t+1}}) - g(x_{\lambda_t})\ra 
        \leq L_g ( \lambda_{t}- \lambda_{t+1})\|x_{\lambda_{t+1}} - x_{\lambda_t}\|. 
    \end{align*}
    Thus, 
    \begin{align*}
        \frac{\mu_f}{2}\|x_{\lambda_{t+1}} - x_{\lambda_t}\| 
        & \leq (\lambda_{t}- \lambda_{t+1}) L_g. 
        \end{align*}
        Combining it with \cref{eq:triangle}
        \begin{align*}
        \|x_{\lambda_{t+1}} - x_{t}\|  \leq \|x_{\lambda_{t+1}} - x_{\lambda_t}\| + \sqrt{\frac{2\eta_t}{\mu_f}} &\leq  (\lambda_{t}- \lambda_{t+1}) \frac{2 L_g}{\mu_f} + \sqrt{\frac{2\eta_t}{\mu_f}} 
    \end{align*}
     This guarantees the safety of the next primal solution $\|x_{t} - x_{\lambda_{t+1}}\|\leq \frac{-\hat g(x_t)}{L_g}$
    if $\lambda_{t}- \lambda_{t+1} \leq  \frac{\mu_f}{4L^2_g} (- \hat g(x_t))$ and 
    $ \eta_t \leq  \mu_f\frac{(-\hat g(x_t))^2}{8L_g^2}.$
That is, solution of the problem $\min_{x\in \mathcal S(x_t)} \mathcal L(x,\lambda_{t+1})$ is equivalent to  $\min_{x\in \R^d} \mathcal L(x,\lambda_{t+1})$.  
Then, by using Algorithm $\mathcal A_{M,\mu}(\mathcal L(\cdot, \lambda_{t+1}), \mathcal S(x_t))$ with projections onto the safety set $ \mathcal S(x_t)$, we ensure that all the inner iterations are safe, including the last one $x_{t+1}$, and we can come arbitrarily close to $x_{\lambda_{t+1}}$ in a safe way. 
\end{proof}

\section{Convergence: Proof of Theorem \ref{theorem:str_cvx_conv}} 
\label{proof:theorem:str_cvx_conv} 

First, we relate the primal optimality to the Lagrangian optimality. 
\begin{lemma}\label{lemma:relation}
    Let  $\mathcal L(x_T,\lambda_T) - \mathcal L(x_{\lambda_T},\lambda_T) \leq \eta_T$, and the approximate complementarity slackness be bounded by $-\lambda_T g(x_T)\leq \e_c$. If  
    $\eta_T\leq \e/2 $ and $ \e_c \leq \e/2$, then $x_T$ satisfies the $\e$-primal optimality 
    $f(x_T) - f(x^*) \leq \e.$
\end{lemma}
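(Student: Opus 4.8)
The plan is to bound $f(x_T) - f(x^*)$ by inserting the Lagrangian at the optimal dual variable $\lambda^*$ and using weak duality together with the two hypotheses. First I would recall that, by weak duality in the convex case, $d(\lambda^*) = \max_{\lambda \geq 0} d(\lambda) \leq f(x^*)$ actually holds with equality under Slater (which follows from Assumption~\ref{assumption:safe_init}), but only $d(\lambda_T) \leq f(x^*)$ is needed. So I would write
\begin{align*}
f(x_T) - f(x^*) &\leq f(x_T) - d(\lambda_T)\\
&= f(x_T) - \mathcal L(x_{\lambda_T},\lambda_T)\\
&= \big(f(x_T) - \mathcal L(x_T,\lambda_T)\big) + \big(\mathcal L(x_T,\lambda_T) - \mathcal L(x_{\lambda_T},\lambda_T)\big).
\end{align*}

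The second bracket is at most $\eta_T \leq \e/2$ directly by the hypothesis that $x_T$ is an $\eta_T$-accurate minimizer of $\mathcal L(\cdot,\lambda_T)$. For the first bracket, I would expand $\mathcal L(x_T,\lambda_T) = f(x_T) + \lambda_T g(x_T)$, so that $f(x_T) - \mathcal L(x_T,\lambda_T) = -\lambda_T g(x_T)$, which is exactly the approximate complementary slackness quantity, bounded by $\e_c \leq \e/2$ by hypothesis. Adding the two bounds gives $f(x_T) - f(x^*) \leq \e_c + \eta_T \leq \e/2 + \e/2 = \e$.

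The only subtlety — and the main thing I would want to state carefully rather than the routine algebra — is the justification that $d(\lambda_T) \leq f(x^*)$, i.e.\ that weak duality applies here. This uses convexity of $f$ and $g$ (Assumption~\ref{assumption:strong-conv}, and $\mu_f$-strong convexity in particular guarantees the inner minimizer $x_{\lambda_T}$ exists and is unique) and the fact that $\lambda_T \geq 0$ throughout the run of Algorithm~\ref{alg:base} (the dual update is a projection onto $\R_+$). Indeed, for any feasible $x$ (in particular $x^*$) and any $\lambda \geq 0$, $d(\lambda) = \min_y \mathcal L(y,\lambda) \leq \mathcal L(x^*,\lambda) = f(x^*) + \lambda g(x^*) \leq f(x^*)$ since $g(x^*) \leq 0$. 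I expect this step to be essentially immediate, so the ``obstacle'' is really just bookkeeping: making sure the statement is invoked with $\lambda = \lambda_T$, not $\lambda^*$, so no optimality of the dual iterate is needed. I would present the three-line display above as the whole proof, with a one-sentence remark on weak duality preceding it.
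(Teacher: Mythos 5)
Your proof is correct and follows essentially the same route as the paper's: both decompose $f(x_T)-f(x^*)$ into the complementary-slackness term $-\lambda_T g(x_T)\le \e_c$ plus the Lagrangian suboptimality $\mathcal L(x_T,\lambda_T)-\mathcal L(x_{\lambda_T},\lambda_T)\le\eta_T$, and compare against the dual value $d(\lambda_T)$. The only (minor) difference is that you invoke weak duality $d(\lambda_T)\le f(x^*)$ directly, whereas the paper passes through $\mathcal L(x^*,\lambda^*)=d(\lambda^*)$ together with $d(\lambda_T)\le d(\lambda^*)$; your version is marginally cleaner since it needs neither strong duality nor exact complementary slackness at the optimum.
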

\begin{proof} 
Note that $\mathcal L(x_T,\lambda_T) - \mathcal L(x^*,\lambda^*)
= \mathcal L(x_T,\lambda_T) - \mathcal L(x_{\lambda_T},\lambda_T) + \underbrace{\mathcal L(x_{\lambda_T},\lambda_T)}_{d(\lambda_T)} - \underbrace{\mathcal L(x^*,\lambda^*)}_{d(\lambda^*)}  \leq \eta_T + d(\lambda_T) - d(\lambda^*)\leq \eta_T,
$
where the last inequality is since $d(\lambda^*) = \max_{\lambda\geq 0}d(\lambda) \geq d(\lambda_T)$. 
     Thus,  
     $f(x_T) - f(x^*)  = \mathcal L(x_T,\lambda_T) - \mathcal L(x^*,\lambda^*) - \lambda_T g(x_T)+0 \leq \eta_T + \e_c \leq \e,$ by condition of the Lemma.
\end{proof}
Below, we provide the full proof of \Cref{theorem:str_cvx_conv}.
\begin{proof}
  Consider the process 
$\lambda_{t+1} \leftarrow \max\{\lambda_t -\frac{(-\hat g(x_t))\mu_f}{4L_g^2},0\}$,
starting from $\lambda_1 = \check\lambda.$ 

By $T_0$ we denote the number of iterations after which $g(x^*_{\lambda_t}) \geq -\frac{\beta}{2}$ or $\lambda_t = 0.$ Such $T_0>0$ exists since  $\lambda_t$ is a decreasing sequence for feasible updates with $\hat g(x_t)<0$, and $\nabla d(\lambda) = g(x^*_{\lambda})$ is monotonously increasing with $\lambda \rightarrow 0$. 


\emph{Firstly,} we can upper bound  $T_0$ as follows.  Note that $ 0\leq \lambda_{T_0} \leq \lambda_0$ since we only decrease $\lambda$. Recall that until reaching $\lambda_{t+1} = 0$, by definition of the algorithm we have 
$\lambda_t - \lambda_{t+1} = \frac{-\hat g(x_t) \mu_f}{4L_g^2}.$
Hence, 
$$\lambda_{0} - \lambda_{T_0} =  \sum_{t=0}^{T_0-1} (\lambda_t - \lambda_{t+1}) = \sum_{t=0}^{T_0-1} \frac{-\hat g(x_t) \mu_f}{4L_g^2} \geq T_0 \frac{\beta \mu_f}{8L_g^2},$$
for all $t\leq T_0$ while $g(x_t) <- \frac{\beta}{2}$. The above directly leads to: $T_0 \leq \frac{8L_g^2(\lambda_0 - \lambda_{T_0})}{\beta\mu_f} \leq \frac{8L_g^2 \lambda_0}{\beta\mu_f} .$

\emph{Secondly,} below we upper bound the number of steps required for convergence after reaching $T_0$.  We have strong convexity of the Lagrangian which gives us $\frac{\mu_f}{2}\|x_t - x_\lambda\|^2 \leq  \mathcal L(x_t) -  \mathcal L(x^*_{\lambda_t}) \leq \eta_t$, and Lipschitz continuity of the constraint: $g(x_t) - g(x_{\lambda_t}) \leq L_g\|x_t - x_{\lambda_t} \|$. Combining these together, we get with probability $\geq 1-\de$, until converging \textcolor{black}{(while $-\hat g(x_{t})\lambda_{t} > -\hat g(x_{t})\lambda_{t+1} > \e_c$)}: 
\begin{align}\label{eq:3.10.2}
\lambda_{t+1} - \lambda^*
& = \lambda_t - \lambda^* - \frac{(-\hat g(x_t))\mu_f}{4L_g^2}\leq \lambda_t - \lambda^* - \frac{(- g(x_t) - \epsilon_t )\mu_f}{4L_g^2}  \nonumber\\
& \leq \lambda_t - \lambda^* - \frac{(- g(x_{\lambda_t}) - \epsilon_t  - L_g\sqrt{2\eta_t/\mu_f})\mu_f}{4L_g^2},
\end{align}
where the second inequality is due to strong convexity $\frac{\mu_f}{2}\|x_{t} - x_{\lambda_t}\|^2\leq \eta_t,$ and $L_g$-Lipschitz continuity of $g$.
\begin{lemma}
        \textcolor{black}{If $\epsilon_t \leq \frac{-\hat g(x_{t-1})}{8}$ and $ \eta_{t} 
        = \frac{\mu_f \hat g(x_{t-1})^2}{128 L_g^2}$, then,  $\frac{- g(x_{\lambda_t})}{2} \leq -\hat g(x_{t}) \leq - 2g(x_{\lambda_t}) .$}
\end{lemma}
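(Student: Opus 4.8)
The plan is to prove the two-sided bound by controlling the difference between $\hat g(x_t)$ and $g(x_{\lambda_t})$ through two sources of error: the estimation error $\epsilon_t$ of the minibatch, and the primal suboptimality $\eta_t$ at $x_t$. First I would record the two deterministic facts that are already established. From the high-probability guarantee on the minibatch (Step 1 of Alg.~\ref{alg:base}), we have $g(x_t) \le \hat g(x_t) \le g(x_t) + 2\epsilon_t$ with probability $1-\delta$ (the upper confidence construction overshoots by at most one confidence width on each side). From $\mu_f$-strong convexity of $\mathcal L(\cdot,\lambda_t)$ together with $\mathcal L(x_t,\lambda_t) - \mathcal L(x_{\lambda_t},\lambda_t) \le \eta_t$, we get $\|x_t - x_{\lambda_t}\| \le \sqrt{2\eta_t/\mu_f}$, and then $L_g$-Lipschitz continuity of $g$ gives $|g(x_t) - g(x_{\lambda_t})| \le L_g\sqrt{2\eta_t/\mu_f}$. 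Chaining these, $\hat g(x_t)$ differs from $g(x_{\lambda_t})$ by at most $\Delta_t := 2\epsilon_t + L_g\sqrt{2\eta_t/\mu_f}$ on each side.

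Next I would plug in the hypothesised choices $\epsilon_t \le -\hat g(x_{t-1})/8$ and $\eta_t = \mu_f \hat g(x_{t-1})^2/(128 L_g^2)$. The second gives $L_g\sqrt{2\eta_t/\mu_f} = L_g \cdot \sqrt{2 \hat g(x_{t-1})^2/(128 L_g^2)} = |\hat g(x_{t-1})|/8 = -\hat g(x_{t-1})/8$, so altogether $\Delta_t \le 2\cdot(-\hat g(x_{t-1})/8) + (-\hat g(x_{t-1})/8) = 3(-\hat g(x_{t-1}))/8 < -\hat g(x_{t-1})/2$. The subtlety is that $\Delta_t$ is expressed in terms of the \emph{previous} iterate's estimate $-\hat g(x_{t-1})$, whereas the claim is a relation between $\hat g(x_t)$ and $g(x_{\lambda_t})$. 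So I need a bridge relating $-\hat g(x_{t-1})$ to $-g(x_{\lambda_t})$ (or to $-\hat g(x_t)$). This is where I would use the safe-transition machinery: by Lemma~\ref{lemma:dual_upd}, $x_{\lambda_t} \in \hat{\mathcal S}(x_{t-1})$, and by Lemma~\ref{lemma:safety_set} every point in $\hat{\mathcal S}(x_{t-1})$ satisfies $g(x) \le \hat g(x_{t-1})/2 < 0$; combined with the fact that the true constraint is below its estimate, this yields a bound like $-g(x_{\lambda_t}) \ge -\hat g(x_{t-1})/2$, i.e. $-\hat g(x_{t-1}) \le 2(-g(x_{\lambda_t}))$. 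Hence $\Delta_t \le -\hat g(x_{t-1})/2 \le -g(x_{\lambda_t})$.

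With $|\hat g(x_t) - g(x_{\lambda_t})| \le \Delta_t \le -g(x_{\lambda_t})$, the lower side gives $-\hat g(x_t) \ge -g(x_{\lambda_t}) - \Delta_t$; to get the clean factor $-\hat g(x_t) \ge -g(x_{\lambda_t})/2$ I would tighten the constant, using that $\Delta_t \le 3(-\hat g(x_{t-1}))/8$ and the bridge bound to get $\Delta_t \le -g(x_{\lambda_t})/2$ (which holds since $\frac{3}{8}\cdot 2 = \frac34 \le 1$ is too weak, so I would instead push the bridge to $-\hat g(x_{t-1}) \le \frac{4}{3}(-g(x_{\lambda_t}))$ or simply absorb the slack — the exact constant juggling is routine once the chain is set up). The upper side gives $-\hat g(x_t) \le -g(x_{\lambda_t}) + \Delta_t \le 2(-g(x_{\lambda_t}))$ immediately. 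I expect the main obstacle to be exactly this constant bookkeeping: making sure the $1/8$ and $1/128$ choices in the hypotheses propagate through the "previous iterate" bridge to land precisely on the factors $\tfrac12$ and $2$ rather than some looser constants, which may require being slightly careful about whether one uses $\hat g(x_{t-1})$, $g(x_{\lambda_{t-1}})$, or $g(x_{\lambda_t})$ at each link and invoking the monotonicity/safety-set containment to relate consecutive iterates. Everything else — strong convexity, Lipschitz continuity, the sub-Gaussian concentration — is standard and already available in the excerpt.
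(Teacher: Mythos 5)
Your proposal follows essentially the same route as the paper: decompose $|\hat g(x_t)-g(x_{\lambda_t})|$ into the measurement error and $L_g\|x_t-x_{\lambda_t}\|\le L_g\sqrt{2\eta_t/\mu_f}$ via strong convexity, then convert the $-\hat g(x_{t-1})$ bounds into $-g(x_{\lambda_t})$ bounds through the containment $x_{\lambda_t}\in\hat{\mathcal S}(x_{t-1})$ from Lemma~\ref{lemma:dual_upd}, which gives exactly $-\hat g(x_{t-1})\le 2\bigl(-g(x_{\lambda_t})\bigr)$. The constant bookkeeping you left open resolves not by tightening that bridge (the factor $2$ is what the safety radius gives) but by noting that the estimation error is the one-sided $\epsilon_t$ rather than $2\epsilon_t$: the upper-confidence estimator has half-width $\epsilon_t/2$, so $g(x_t)\le\hat g(x_t)\le g(x_t)+\epsilon_t$, whence $\Delta_t\le \epsilon_t+L_g\sqrt{2\eta_t/\mu_f}\le \tfrac{1}{4}\bigl(-\hat g(x_{t-1})\bigr)\le \tfrac{1}{2}\bigl(-g(x_{\lambda_t})\bigr)$ and the stated factors $\tfrac12$ and $2$ (indeed $\tfrac32$) follow directly.
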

\begin{proof}
       \textcolor{black}{ Note that $\epsilon_t \leq \frac{-\hat g(x_{t-1})}{8} \leq \frac{-g(x_{\lambda_t})}{4}$(by Lemma \ref{lemma:dual_upd}). Similarly, using 
        $\eta_{t} 
        = \frac{\mu_f \hat g(x_{t-1})^2}{128 L_g^2}$, we get
        $ L_g\sqrt{\frac{2\eta_t}{\mu_f}} \leq  \frac{-g(x_{\lambda_t})}{4}.$ }
\end{proof}

Then, combined with Eq. (\ref{eq:3.10.2}), it implies 

$$\lambda_{t+1} - \lambda^*
 \leq \lambda_t - \lambda^* - \frac{- g(x_{\lambda_t})\mu_f}{8L_g^2}.$$ 
 Note that after $T_0$ we have $\mu_d$-strong concavity of the dual function (or stop because of reaching $0$, in this case $T = T_0$.)
 The strong concavity implies: (For the proof see Appendix \ref{proof:lemma:lambda_str_bound}).
\begin{lemma}\label{lemma:lambda_str_bound}
    For any $\lambda\geq 0$ such that $ g(x_{\lambda})\geq-\beta/2$ we have \begin{align} -g(x_{\lambda}) &\geq \frac{\mu_d}{2} (\lambda - \lambda^*). 
    \label{eq:dual-local-str-cvx} \end{align}
\end{lemma}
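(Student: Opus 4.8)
The plan is to read the claim through the dual function $d$. Recall from the preliminaries that $d$ is concave with $\nabla d(\lambda) = g(x_\lambda)$, and that by Lemma~\ref{lemma:local_dual_strong_concavity} it is $\mu_d$-strongly concave on the set $\Omega_\lambda := \{\lambda \ge 0 : g(x_\lambda) \ge -\beta/2\}$, using the gradient lower bound $\|\nabla g(x_\lambda)\| \ge \beta/(2R)$ valid there. Since $\nabla d(\lambda) = g(x_\lambda)$ is non-increasing in $\lambda$ (concavity of $d$), the super-level set $\Omega_\lambda$ is an interval of the form $[0,\bar\lambda]$, and it contains $\lambda^*$: indeed $\nabla d(\lambda^*) = g(x_{\lambda^*}) = 0$ when $\lambda^* > 0$ (first-order optimality of $\max_{\lambda\ge 0} d$), while if $\lambda^* = 0$ the unconstrained minimizer of $f$ is feasible, so $\nabla d(\lambda^*) = g(x_{\lambda^*}) \le 0$. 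In either case $-\nabla d(\lambda^*) = -g(x_{\lambda^*}) \ge 0$, a fact I will use at the very end.

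First I would reduce to $\lambda \ge \lambda^*$: this is the only regime needed, since the dual iterates of Algorithm~\ref{alg:base} decrease monotonically toward $\lambda^*$ and stay above it (feasibility of $x_{\lambda_{t+1}}$, i.e. $\nabla d(\lambda_{t+1})\le 0$, forces $\lambda_{t+1}\ge\lambda^*$ by monotonicity of $\nabla d$), and if $\lambda < \lambda^*$ the right-hand side of the claimed inequality is non-positive. For $\lambda \in \Omega_\lambda$ with $\lambda \ge \lambda^*$, monotonicity of $\nabla d$ guarantees that the whole segment $[\lambda^*,\lambda]$ lies in $\Omega_\lambda$ — each intermediate $\mu$ satisfies $g(x_\mu) \ge g(x_\lambda) \ge -\beta/2$ — so strong concavity of the scalar function $d$ is available along this segment. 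Hence, either by integrating $\nabla^2 d(\mu) \le -\mu_d$ over $[\lambda^*,\lambda]$, or directly from the monotonicity form $\big(\nabla d(\lambda) - \nabla d(\lambda^*)\big)(\lambda - \lambda^*) \le -\mu_d(\lambda-\lambda^*)^2$ divided by $\lambda - \lambda^* > 0$, we obtain
\[
-\nabla d(\lambda) \;\ge\; -\nabla d(\lambda^*) + \mu_d(\lambda-\lambda^*).
\]

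Finally, using $-\nabla d(\lambda^*) = -g(x_{\lambda^*}) \ge 0$ from the first paragraph,
\[
-g(x_\lambda) \;=\; -\nabla d(\lambda) \;\ge\; \mu_d(\lambda-\lambda^*) \;\ge\; \frac{\mu_d}{2}(\lambda-\lambda^*),
\]
which is the assertion (in fact with a factor of two to spare; keeping $\mu_d/2$ is convenient because elsewhere one substitutes the uniform lower bound $\mu_d \ge \beta^2/(4R^2 M_L)$ from Lemma~\ref{lemma:local_dual_strong_concavity}, and this slack absorbs the pointwise-versus-uniform gap). I expect the only real friction to be the bookkeeping around $\Omega_\lambda$: checking that $\lambda^*$ lies in the region where strong concavity has been established, that the segment $[\lambda^*,\lambda]$ never leaves it, and handling the degenerate case $\lambda^* = 0$, where stationarity is replaced by primal feasibility of the unconstrained minimizer.
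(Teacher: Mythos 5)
Your proof is correct and follows essentially the same route as the paper: both arguments are one-step consequences of the local strong concavity of $d$ from Lemma~\ref{lemma:local_dual_strong_concavity} applied on the segment between $\lambda^*$ and $\lambda$ (the paper combines the quadratic-growth inequality at the unconstrained maximizer $\hat\lambda^*$ with plain concavity and then uses $\lambda^*=\max\{\hat\lambda^*,0\}$, whereas you use gradient monotonicity together with $-\nabla d(\lambda^*)\geq 0$, which in fact yields the slightly stronger constant $\mu_d$ in place of $\mu_d/2$). One small caveat: your remark that the case $\lambda<\lambda^*$ is trivial ``because the right-hand side is non-positive'' is not quite right, since there $x_\lambda$ is typically infeasible and $-g(x_\lambda)$ can also be negative; but this regime is likewise excluded in the paper's own proof (which assumes $\lambda>\lambda^*$ and $g(x_\lambda)\leq 0$) and never occurs for the algorithm's iterates.
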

 By using Lemma \ref{lemma:lambda_str_bound} (\ref{eq:dual-local-str-cvx}), we obtain:
\begin{align*}
\lambda_{t+1} - \lambda^* 
& \leq \lambda_t - \lambda^* - \frac{ (\lambda_t - \lambda^*) \mu_f\mu_d}{16L_g^2} 
= (\lambda_t - \lambda^*)\left(1 
- \frac{\mu_f \mu_d}{16L_g^2}\right).
\end{align*}
By continuing inductively until $\lambda_{T_0}$, for all $t \geq T_0$ we get:
\begin{align*}
\lambda_{t+1} - \lambda^* &\leq \left(1 - \frac{\mu_f\mu_d}{16L_g^2}\right)^{t-T_0}(\lambda_{T_0} - \lambda^*) \leq \left(1 - \frac{\mu_f\mu_d}{16L_g^2}\right)^{t-T_0}\check\lambda .
\end{align*} 

\emph{Thirdly,} 
Note that the algorithm stops as soon as $-\hat g(x_{t})\lambda_{t+1} \leq \e_c,$ in this case it founds $x_{t+1}$ with accuracy $\eta_T = O(\e_p^2)$, which leads to $(x_{t+1}, \lambda_{t+1})$ being an $(\e_c, \e_p)$-approximate KKT point. 

Until then, $\e_c \leq -\hat g(x_t)\lambda_{t+1} = -\hat g(x_t)(\lambda_{t+1} -  \lambda^*) + (- \hat g(x_t))\lambda^*.$  From here, we consider 2 cases. 

\textbf{Case 1.} If $\lambda^*>0$, then $\nabla d(\lambda^*) = g(x^*) = 0$. Using that:
$$ 
- \hat g(x_{t}) \leq -g(x_{\lambda_{t}}) + \e_t + L_g\sqrt{\frac{\eta_{t}}{\mu_f}} \leq - 2 g(x_{\lambda_{t+1}}) - \frac{-\hat g(x_{t})}{2},$$
we get:
$$- \hat g(x_{t}) =  4(- g(x_{\lambda_{t+1}}))= 4\|\nabla d(\lambda_{t+1})\| \leq \frac{4L_g^2}{\mu_f}(\lambda_{t+1} - \lambda_*).$$
Then $$ -\hat g(x_t)\lambda_{t+1} \leq \frac{4L_g^2}{\mu_f}\lambda_{t+1} (\lambda_{t+1} - \lambda_*)$$

\textbf{Case 2.}
If $\lambda^* = 0,$ then 
$$  -\hat g(x_t)\lambda_{t+1} = -\hat g(x_t)(\lambda_{t+1}-\lambda^*) \leq G (\lambda_{t+1}-\lambda^*)$$

That is, until stopping:
$$ \e_c \leq -\hat g(x_t)\lambda_{t+1} \leq \max\left\{G, \frac{4L_g^2\check\lambda}{\mu_f}\right\}  (\lambda_{t+1} - \lambda_*)\leq  \max\left\{G, \frac{4L_g^2\check\lambda}{\mu_f}\right\} \left(1 - \frac{\mu_f\mu_d}{16L_g^2}\right)^{t-T_0} \check\lambda.$$
 What implies that until stopping we have at most 

$$t-T_0\leq\frac{\ln\left(\max\left\{\frac{4L_g^2\check\lambda^2}{\mu_f}, \bar G\check\lambda\right\}\frac{1}{\e_c}\right) }{ -\ln (1 - \frac{\mu_d\mu_f}{16L_g^2})}.$$
Note that $\frac{z}{1+z} \leq \ln(1+z) \forall z >-1$, that is $\ln(1 - \frac{\mu_d\mu_f}{16L_g^2}) \geq \frac{- \mu_g \mu_f}{16L_g^2}.$ Thus, we require
\begin{align}
    t\leq T_0 + \frac{16L_g^2}{\mu_d\mu_f} \ln\left(\max\left\{\frac{4L_g^2\check\lambda^2}{\mu_f}, \bar G\check\lambda\right\}\frac{1}{\e_c}\right) .
\end{align}

\emph{Then,} in total we require $T>0$ steps with 
\begin{align}
T \leq \frac{8L_g^2\check\lambda}{\beta\mu_f} +  \frac{16L_g^2}{\mu_d\mu_f}O\left(\ln\frac{\check\lambda^2}{\mu_f\e_c}\right) .
\end{align} Recall that $\mu_d \geq \frac{\beta^2}{4R^2 M_L},$ then we get the bound from the theorem.

\emph{Finally,} recall that for the last step when $ -\hat g(x_{T-1})\lambda_{T} \leq \e_c$ we have $\eta_{T} \leq \frac{\mu_f\e_p^2}{(M_f + \lambda_T M_g)^2}$. 
\textcolor{black}{From smoothness of Lagrangian we have $\|\nabla \mathcal L(x_T)\| \leq (M_f + \lambda_T M_g)\|x_T - x_{\lambda_T}\| \leq (M_f + \lambda_T M_g)\sqrt{\frac{\mathcal L(x_T) - \mathcal L(x_{\lambda_T})}{\mu_f} } \leq (M_f + \lambda_T M_g)\sqrt{\frac{\eta_T}{\mu_f}} \leq \e_p.$} 
 What directly implies $\|\nabla \mathcal L(x_T)\| \leq \e_p $, i.e., $(x_{t+1},\lambda_{t+1}) $ is a $(\e_p, \e_c)$-KKT point.

Then, by using Lemma \ref{lemma:relation}, if $\eta_t \leq \e/2$ and $\e_c \leq \e/2$, we get an $\e$-accurate solution  $|f(x_T) - f(x^*)| \leq \e$. The expression for iteration complexity in Eq. (\ref{eq:T_str_convex}) gives us the \textit{linear convergence rate} for the dual iterations. 
\end{proof}

\subsection{Proof of Lemma \ref{lemma:lambda_str_bound}}\label{proof:lemma:lambda_str_bound}
\begin{lemma}
    For any $\lambda\geq 0$ such that $ g(x_{\lambda})\geq-\beta/2$ we have \begin{align*} -g(x_{\lambda}) &\geq \frac{\mu_d}{2} (\lambda - \lambda^*).  \end{align*}
   \textcolor{black}{ where $\lambda^* = \arg\min_{\lambda \geq 0} d(\lambda)$}
\end{lemma}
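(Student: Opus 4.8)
The plan is to read the inequality off the gradient-monotonicity of the locally strongly concave dual function $d$, comparing $\lambda$ with its maximizer $\lambda^*$. One remark first: the bound is really a statement about $\lambda\ge\lambda^*$ — equivalently, the regime in which $x_\lambda$ is feasible, which is where all dual iterates of Algorithm~\ref{alg:base} live; in the regime $\lambda<\lambda^*$ the inequality can actually fail, so I would prove it for $\lambda\ge\lambda^*$. From the proof of Lemma~\ref{lemma:dual-smoothness}, $d$ is differentiable with $\nabla d(\mu)=g(x_\mu)$, and since $\mathcal L(\cdot,\mu)$ is affine in $\mu$, $d$ is concave, so $\mu\mapsto g(x_\mu)$ is non-increasing; hence $\Omega:=\{\mu\ge 0:\ g(x_\mu)\ge -\beta/2\}$ is an initial segment of $[0,\infty)$, and it contains the given $\lambda$ by hypothesis.

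Next I would check that $\lambda^*\in\Omega$ and that the whole segment $[\lambda^*,\lambda]$ lies in $\Omega$. By optimality of $\lambda^*$ over $[0,\infty)$ we have $g(x_{\lambda^*})=\nabla d(\lambda^*)\le 0$, with $g(x_{\lambda^*})=0$ when $\lambda^*>0$ by complementary slackness; and if $\lambda^*=0$, monotonicity gives $g(x_0)\ge g(x_\lambda)\ge-\beta/2$. Either way $g(x_{\lambda^*})\ge-\beta/2$, so $\lambda^*\in\Omega$, and since $\Omega$ is an interval the segment $[\lambda^*,\lambda]$ is contained in it. On that segment $-d$ is $\mu_d$-strongly convex by Lemma~\ref{lemma:local_dual_strong_concavity}, with $\mu_d=\frac{\beta^2}{4R^2(M_f+\lambda M_g)}$ (taking the local modulus at the larger endpoint $\lambda$ as a uniform lower bound along the segment). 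Gradient-monotonicity of the strongly convex function $-d$ then gives $\bigl(g(x_{\lambda^*})-g(x_\lambda)\bigr)(\lambda-\lambda^*)\ge\mu_d(\lambda-\lambda^*)^2$; for $\lambda=\lambda^*$ the claim is trivial since $-g(x_\lambda)=-g(x^*)\ge0$, and for $\lambda>\lambda^*$ dividing by $\lambda-\lambda^*>0$ and using $g(x_{\lambda^*})=g(x^*)\le0$ yields $-g(x_\lambda)\ge\mu_d(\lambda-\lambda^*)-g(x^*)\ge\mu_d(\lambda-\lambda^*)\ge\frac{\mu_d}{2}(\lambda-\lambda^*)$, which is the claim — in fact with the sharper constant $\mu_d$, the stated $\mu_d/2$ leaving slack, e.g.\ for bounding the varying local modulus uniformly along the segment.

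The main obstacle is the bookkeeping around $\lambda^*$: placing $\lambda^*$ inside the local strong-concavity region $\Omega$ — this is where the complementary-slackness dichotomy ($\lambda^*>0\Rightarrow g(x^*)=0$, otherwise $\lambda^*=0\le\lambda$) together with monotonicity of $\mu\mapsto g(x_\mu)$ is used — and ensuring the segment $[\lambda^*,\lambda]$ is governed by a single modulus $\mu_d$ so that the strong-convexity gradient inequality applies verbatim. Everything after that is the one-line gradient-monotonicity estimate for a strongly convex function.
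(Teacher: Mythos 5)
Your proof is correct and rests on the same ingredient as the paper's — the local strong concavity of $d$ from Lemma~\ref{lemma:local_dual_strong_concavity} — but you invoke it through gradient strong monotonicity of $-d$ between $\lambda^*$ and $\lambda$, whereas the paper combines the function-value form $d(\hat\lambda^*)-d(\lambda)\ge\frac{\mu_d}{2}|\lambda-\hat\lambda^*|^2$ with the first-order concavity bound $\langle\nabla d(\lambda),\hat\lambda^*-\lambda\rangle\ge d(\hat\lambda^*)-d(\lambda)$, working with the unconstrained maximizer $\hat\lambda^*$ and projecting to $\lambda^*=\max\{\hat\lambda^*,0\}$ only at the end. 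Your route buys three things the paper glosses over: (i) you make explicit that the claim only holds for $\lambda\ge\lambda^*$ (the paper's proof silently adds the hypothesis $g(x_\lambda)\le 0$, which is not in the lemma statement, and indeed the inequality reverses for $\lambda<\lambda^*$); (ii) you verify that the entire segment $[\lambda^*,\lambda]$ lies in the region where the strong-concavity modulus is valid, using monotonicity of $\mu\mapsto g(x_\mu)$ and the complementary-slackness dichotomy at $\lambda^*$, which the paper takes for granted; and (iii) you track the $\lambda$-dependence of $\mu_d$ and obtain the sharper constant $\mu_d$ rather than $\mu_d/2$. The paper's version is shorter but correspondingly looser on these points; both are valid in the regime where the lemma is actually applied, since all dual iterates of Algorithm~\ref{alg:base} satisfy $\lambda_t\ge\lambda^*$.
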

\begin{proof}
Recall that from \Cref{lemma:local_dual_strong_concavity}, we have for all $\lambda\geq 0$ such that $ g(x_{\lambda})\geq-\beta/2$ the local strong concavity:
$d(\hat \lambda^*) \geq d(\lambda) + \frac{\mu_d}{2}\|\lambda - \hat \lambda^*\|^2,$ \textcolor{black}{where $\hat\lambda^* = \arg\min_{\lambda\in\R} d(\lambda).$}
By using concavity, for all $\lambda>\lambda^*$ and with $g(x_{\lambda})\leq 0$ we get:
\begin{align*}
     \la \nabla_{\lambda} d(\lambda), \hat \lambda^* - \lambda \ra &\geq 
    d(\hat\lambda^*) - d(\lambda) \geq  \frac{\mu_d}{2}\|\lambda - \hat\lambda^*\|^2.\nonumber \\
    -g(x_{\lambda}) | \lambda -\hat\lambda^*|  &=  -g(x_{\lambda}) ( \lambda -\hat\lambda^*)  \geq  \frac{\mu_d}{2}|\lambda - \hat\lambda^*|^2\nonumber \\
    -g(x_{\lambda}) &\geq \frac{\mu_d}{2} |\lambda - \hat \lambda^*| = \frac{\mu_d}{2} (\lambda - \hat \lambda^*) \geq \frac{\mu_d}{2} (\lambda - \lambda^*), 
\end{align*}
since $\lambda^* = \max\{\hat \lambda^*, 0\}$
\end{proof}

\subsection{Proof of Theorem \ref{thm:cvx}}\label{appendix:proof:thm:cvx}
\begin{proof}
   Note that the new objective of problem $\hat{\mathcal P}$ is $\e$-strongly-convex, which allows to apply the guarantees of the previous section, implying that 
    to reach the accuracy $\e/2$ of the regularized problem the algorithm requires   
    $ \mathcal N = O(\frac{\Lambda}{\beta} + \frac{R^2}{\mu_f\beta^2}\log\frac{1}{\e})O\left(\frac{\sigma^2}{\e^2}+\frac{(L + \sigma)^2}{\mu_f\e_{p}^2}\right) = \tilde O(\frac{1}{\e\beta}\left(\frac{1}{\e^2} + \frac{1}{\e^3}\right)) = O(\frac{1}{\e^4}).$ 
    Then, the algorithm outputs $x_T$ satisfying: 
  $ f(x_T) + \frac{\mu_f}{2}\|x_0 - x_T\|^2  - f(x^*) - \frac{\mu_f}{2}\|x_0 -  x^*\|^2  \leq \frac{\e}{2},$
    which is due to $x^*$ is the minimizer, and by the property of the algorithm reaching accuracy $\frac{\e}{2}.$ 
    The above implies the target accuracy $f( x_T)   -  f(x^*) \leq $
        $\frac{\e}{2} + \frac{\mu_f}{2}\|x_0 - x^*\|^2 -  \frac{\mu_f}{2}\|x_0 - x_T\|^2\leq \frac{\e}{2} + \frac{\mu_f}{2}R^2 \leq \e.$
    Safety of this method follows directly from the safety of the SCSA (Alg. \ref{alg:base}) algorithm.
\end{proof}

\subsection{Proof of Corollary \ref{corr:sample_complexity}}\label{appendix:proof:corr:sample_complexity}
\begin{proof}
For the internal primal iterations we can use any algorithm for strongly-convex smooth stochastic problems (e.g. projected SGD), and obtain the complexity $O(\frac{1}{\eta_t}) = O(\frac{1}{\e^2})$. 
    Recall that $\epsilon_t = \frac{- \hat g(x_t)}{8},$ that implies, before stopping $\e_t\geq \frac{\e}{8 \lambda_t}$ (if $\lambda_t >0$), or, if $\lambda_t = 0$, measuring the value of $g(x_t)$ is not needed any more, since the solution is strictly in the interior. 
    In order to have $\epsilon_t$-accurate measurements of $g(x_t)$ we need $n_t = O(\frac{\sigma^2}{\epsilon_t^2}) = O(\frac{\sigma^2}{\e^2})$ extra measurements at each of outer iterations. That is, in total we need 
     $\mathcal N(\e) = \mathcal N_0(\check\eta) + \sum_{t=1}^T (\mathcal N_{\mathcal A}(\eta_t) + n_t)$ 
    first- and zeroth-order measurements in total. If we use descent-SGD with minibatches for the initialization algorithm $\check{\mathcal A}$, 
    from Lemma \ref{proof:lemma:init_sgd}) we get its sample complexity bound  
$\mathcal N_0(\check \eta) = \tilde O(\frac{1}{\check\eta^2}\log \frac{1}{\check\eta}) = \tilde O(\frac{1}{\alpha^2}\log \frac{1}{\alpha}).$
\end{proof}

\section{Non-convex Case: Proof of Theorem  \ref{theorem:noncvx_conv}}\label{proof:theorem:noncvx_conv}
 \paragraph{Notations} For the analysis, we construct the following potential function 
    $\phi_k := \mathcal K_{x_{k-1}}(x_k, \lambda_k) $ for $k\geq 1$.  
    At every iteration, we solve regularized subproblem \ref{sub-problem:nonconvex} up to accuracy $(\e_{p}, \e_{c})$, that is:
      \begin{align*}
      \mathcal K_{x_{k-1}}(x_k,\lambda_k) - \mathcal K_{x_{k-1}}( x_k^*,\lambda_k)& \leq \eta_k =\frac{\mu_f}{2}\e_{p}^2 \\ 
       \|\nabla_x \mathcal K_{x_{k-1}}(x_k,\lambda_k)\| & \leq \e_{p}\\
          - \lambda_k\left( g(x_k) + \frac{\rho_g}{2}\|x_k - x_{k-1}\|^2\right) & \leq  \e_{c}, 
      \end{align*}
       Indeed, $\|\nabla_x \mathcal K_{x_{k-1}}(x_k,\lambda_k)\|^2 \leq  M_K( \mathcal K_{x_{k-1}}(x_k,\lambda_k) - \mathcal K_{x_{k-1}}(\hat x_k,\lambda_k)) \leq \e_{p}^2$. In the above, $\hat x_k = \arg\min_x \mathcal K_{x_{k-1}}(\cdot,\lambda_k).$ 
       
\textcolor{black}{\paragraph{Stopping criterion} 
Below, see the key lemma for the convergence of our approach, that also provides us with the stopping criterion.
\begin{lemma}\label{lemma:KKT_con_cvx}
As soon as $\|x_k - x_{k-1}\|\leq \min\left\{ \frac{\e_{p}}{\rho_f + \check\lambda_{k}\rho_g},\sqrt{\frac{2\e_{c}}{\check\lambda_{k}\rho_g}}\right\}$, the output 
every outer iteration $(x_k,\lambda_k)$ satisfies $(\e_{p} + \e_p ,\e_{c}+\e_c)$-KKT condition for the original problem. 
\end{lemma}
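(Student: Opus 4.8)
The plan is to compare the gradient and complementary-slackness quantities of the \emph{original} Lagrangian $\mathcal L(x,\lambda)=f(x)+\lambda g(x)$ with those of the \emph{regularized} Lagrangian $\mathcal K_{x_{k-1}}(x,\lambda)$ that the inner SCSA call actually drives to accuracy $(\e_p,\e_c)$, and to absorb the mismatch using the stopping bound on $\|x_k-x_{k-1}\|$. The key algebraic identity is
\begin{align*}
\nabla_x \mathcal K_{x_{k-1}}(x_k,\lambda_k)=\nabla_x \mathcal L(x_k,\lambda_k)+(\rho_f+\lambda_k\rho_g)(x_k-x_{k-1}),
\end{align*}
so that, by the triangle inequality and the guarantee $\|\nabla_x\mathcal K_{x_{k-1}}(x_k,\lambda_k)\|\le\e_p$ from Step~2,
\begin{align*}
\|\nabla_x \mathcal L(x_k,\lambda_k)\|\le \e_p+(\rho_f+\lambda_k\rho_g)\,\|x_k-x_{k-1}\|.
\end{align*}
To control the second term I would invoke the monotone decrease of the dual iterate inside SCSA (the same property used in the safety analysis and in \Cref{lemma:dual_upd}, since the dual step is $\lambda_{t+1}=\max\{\lambda_t+\gamma\hat g(x_t),0\}$ with $\hat g(x_t)\le0$), which gives $\lambda_k\le\check\lambda_k$. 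Combined with the stopping condition $\|x_k-x_{k-1}\|\le\e_p/(\rho_f+\check\lambda_k\rho_g)$, the second term is at most $\e_p$, hence $\|\nabla_x\mathcal L(x_k,\lambda_k)\|\le\e_p+\e_p$.

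For the complementary-slackness condition I would split
\begin{align*}
-\lambda_k g(x_k)=-\lambda_k\Big(g(x_k)+\tfrac{\rho_g}{2}\|x_k-x_{k-1}\|^2\Big)+\lambda_k\tfrac{\rho_g}{2}\|x_k-x_{k-1}\|^2 ,
\end{align*}
where the first bracketed term is at most $\e_c$ by the $(\e_p,\e_c)$-accuracy of Step~2, and the second, using again $\lambda_k\le\check\lambda_k$ together with the stopping bound $\|x_k-x_{k-1}\|^2\le 2\e_c/(\check\lambda_k\rho_g)$, is at most $\e_c$; thus $-\lambda_k g(x_k)\le\e_c+\e_c$. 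Feasibility is immediate: $\lambda_k\ge0$ by construction of SCSA, and since SCSA applied to $\mathcal P_{x_{k-1}}$ keeps $x_k$ feasible for the regularized constraint, $g(x_k)\le g(x_k)+\tfrac{\rho_g}{2}\|x_k-x_{k-1}\|^2\le0$. Collecting the three facts yields the claimed $(\e_p+\e_p,\e_c+\e_c)$-KKT point for \eqref{problem}.

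The only step that is not a one-line rearrangement is justifying $\lambda_k\le\check\lambda_k$, i.e.\ that the inner SCSA run never pushes the dual variable above its warm start $\check\lambda_k$; this is exactly the monotone-decrease property of the dual gradient-ascent step already established within the strongly-convex safety analysis, so I would simply cite it rather than reprove it. Everything else follows from the identity above, the accuracy specification of Step~2, and the two stopping inequalities, so I do not expect any real obstacle beyond bookkeeping of the constants $\rho_f,\rho_g,\check\lambda_k$.
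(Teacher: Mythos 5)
Your proof is correct and follows essentially the same route as the paper's: the triangle inequality applied to the identity $\nabla_x\mathcal K_{x_{k-1}}=\nabla_x\mathcal L+(\rho_f+\lambda_k\rho_g)(x_k-x_{k-1})$ for the gradient bound, and the additive split of $-\lambda_k g(x_k)$ into the regularized slackness term plus $\lambda_k\tfrac{\rho_g}{2}\|x_k-x_{k-1}\|^2$ for complementarity. You are somewhat more careful than the paper in explicitly invoking $\lambda_k\le\check\lambda_k$ (which the paper uses implicitly) and in verifying the sign conditions, but the argument is the same.
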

\begin{proof}
Indeed, $\|\nabla \mathcal L(x_k,\lambda_k)\| \leq$  $\|\nabla \mathcal K_{x_{k-1}}(x_k,\lambda_k)\|  +  (\rho_f + \check\lambda_{k}\rho_g)\|x_k - x_{k-1}\| \leq \e_{p} + \e_{p}.$
    Also, we get: 
    $ - \lambda_k\left( g(x_k) + \frac{\rho_g}{2}\|x_k - x_{k-1}\|^2\right) \leq \e_{c} $ which implies $-\lambda_k g(x_k) \leq \e_{c} + \lambda_k\frac{\rho_g}{2}\|x_k - x_{k-1}\|^2 \leq \e_{c} + \e_c.$ 
\end{proof}}

\begin{theorem}
    Consider problem (\ref{problem}) with non-convex objective and constraints, and let Assumption \ref{assumption:mfcq} hold for some constant $\theta>0$. Then, \Cref{alg:non-convex} stops after at most $K = O\left(\frac{1}{\e_{p}^2}\right)$ outer iterations. In total, it requires $\mathcal N = O\left(\frac{1}{\e_{p}^6\theta}\right)$ measurements.
    \end{theorem}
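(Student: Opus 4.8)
The plan is to track progress via the potential function $\phi_k := \mathcal K_{x_{k-1}}(x_k,\lambda_k)$ and show it decreases by a fixed amount proportional to $\|x_k - x_{k-1}\|^2$ at each outer step, so that the telescoping sum forces $\|x_k - x_{k-1}\| \to 0$; once $\|x_k - x_{k-1}\|$ is below the threshold in Step 3, \Cref{lemma:KKT_con_cvx} certifies an $(\e_p,\e_c)$-approximate KKT point for the original problem. The argument has three ingredients: (i) a one-step potential decrease lemma, (ii) a bound showing $\phi_k$ is bounded below, and (iii) converting the resulting $O(1/\e_p^2)$ outer-iteration count together with the per-subproblem cost of SCSA into the total sample complexity.

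\textbf{Key steps.} First I would establish the potential decrease. By optimality (up to accuracy) of $(\check x_k, \check\lambda_k)$ and then of $(x_k,\lambda_k)$ for the regularized subproblem $\mathcal P_{x_{k-1}}$, and using that $\mathcal K_{x_{k-1}}(x_{k-1},\lambda) = f(x_{k-1}) + \lambda g(x_{k-1})$ contains no regularization term at $x = x_{k-1}$, I would compare $\phi_{k+1} = \mathcal K_{x_k}(x_{k+1},\lambda_{k+1})$ to $\phi_k = \mathcal K_{x_{k-1}}(x_k,\lambda_k)$. The crucial move is that $\mathcal K_{x_{k-1}}(x_k, \lambda_k) \geq f(x_k) + \lambda_k g(x_k) + \frac{\rho_f}{2}\|x_k - x_{k-1}\|^2$ (dropping the nonnegative $\lambda_k \frac{\rho_g}{2}\|x_k-x_{k-1}\|^2$ term only where helpful), while $\mathcal K_{x_k}(x_{k+1},\lambda_{k+1}) \leq \mathcal K_{x_k}(x_k, \lambda_{k+1}) + \eta_{k+1} = f(x_k) + \lambda_{k+1} g(x_k) + \eta_{k+1}$, and then controlling the dual discrepancy $(\lambda_{k+1} - \lambda_k) g(x_k)$ using the warm-start formula for $\hat\lambda_{k+1}$ in Step 4 together with the approximate complementarity bound $-\lambda_k(g(x_k) + \frac{\rho_g}{2}\|x_k - x_{k-1}\|^2) \leq \e_c$. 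This should yield something like $\phi_{k+1} \leq \phi_k - c\,\rho_f\|x_k - x_{k-1}\|^2 + (\text{error terms in } \eta_k, \check\eta_k, \e_c)$. Second, I would lower-bound $\phi_k$: since $\mathcal K_{x_{k-1}}(x_k,\lambda_k) \geq \min_x \mathcal K_{x_{k-1}}(x,\lambda_k) \geq \min_{x: g(x) + \frac{\rho_g}{2}\|x-x_{k-1}\|^2 \leq 0} f(x) \geq f^* $ (using feasibility of the regularized problem and $\lambda_k \geq 0$), hence $\phi_k \geq f(x^*) - $ small, so $\phi_1 - \phi_K$ is bounded by a constant. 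Telescoping then gives $\sum_{k=1}^K \|x_k - x_{k-1}\|^2 = O(1)$, so the minimum squared step is $O(1/K)$, meaning within $K = O(1/\e_p^2) = O(1/\e^2)$ iterations some step (in fact, using the monotone-ish structure, the $K$-th step) satisfies the Step 3 threshold $\|x_k - x_{k-1}\| \leq \min\{\e_p/(\rho_f + \check\lambda_k\rho_g), \sqrt{2\e_c/(\check\lambda_k\rho_g)}\}$; here I must also check that $\check\lambda_k$ stays bounded (uniformly in $k$), which follows from the recursion in Step 4 together with $\lambda_k \leq \Lambda_k$ bounds from \Cref{lemma:upper_lambda} applied to each subproblem, so that $\e_p^2/(\rho_f + \check\lambda_k \rho_g)^2$ is bounded below by a constant times $\e_p^2$.

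\textbf{Sample complexity and safety.} For the total count, each outer iteration runs SCSA on the strongly-convex subproblem $\mathcal P_{x_{k-1}}$ to accuracy $(\e_p,\e_c)$; by \Cref{corr:sample_complexity} and \Cref{theorem:str_cvx_conv} that costs $\tilde O(1/\e_p^2) \cdot$ (logarithmic factor), but the MFCQ-type \Cref{assumption:mfcq} enters through the local strong concavity constant $\mu_d = \Omega(\theta^2)$ in the subproblem (since $\|\nabla g\| \geq l$ only near the boundary, and the relevant $\beta_k = -\hat g(x_{k-1})$ can be as small as $\Theta(\theta)$ once iterates approach the boundary), giving a per-subproblem cost of $\tilde O(1/(\theta \e^2))$ or so; multiplying by $K = O(1/\e^2)$ and substituting $\e_p = \e_c = \e$ yields $\mathcal N = O(1/(\e^6\theta))$. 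Safety of all iterates follows immediately: each inner call is SCSA initialized at the safe pair $(\check x_k, \check\lambda_k)$ (obtained safely by the descent-method initialization, \Cref{lemma:safe_init} applied to $\mathcal K_{x_{k-1}}$), and SCSA is safe with probability $1-\de$ per call by the safety theorem, so a union bound over the $K$ calls gives feasibility with probability $1 - \de K$. The main obstacle I anticipate is the one-step potential decrease lemma: verifying that the warm-start choice of $\hat\lambda_{k+1}$ in Step 4 is simultaneously (a) large enough to keep the next subproblem's Lagrangian minimizer feasible for the regularized constraint (so SCSA can be initialized), and (b) controlled enough that the dual cross-term $(\lambda_{k+1}-\lambda_k)g(x_k)$ does not destroy the $-\Omega(\rho_f)\|x_k-x_{k-1}\|^2$ decrease — this requires carefully combining the smoothness/weak-convexity constants $\rho_f, \rho_g, M_f, M_g$ with the subproblem accuracies $\eta_k, \check\eta_k, \e_c$, and is where the bulk of the technical bookkeeping lives.
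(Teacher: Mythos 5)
Your proposal follows essentially the same route as the paper: the identical potential $\phi_k = \mathcal K_{x_{k-1}}(x_k,\lambda_k)$, a one-step decrease of order $\rho_f\|x_k-x_{k-1}\|^2$ obtained by splitting the change of $\phi$ into a primal-optimality term, a regularizer-recentering term, and a dual term controlled by approximate complementary slackness, then telescoping against the Step-3 stopping threshold to get $K=O(\max\{1/\e_p^2,1/\e_c\})$ and multiplying by the SCSA per-subproblem cost.

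One concrete issue in your accounting of the $\theta$-dependence: you assert $\mu_d = \Omega(\theta^2)$ for the subproblems but then claim a per-subproblem cost of $\tilde O(1/(\theta\e^2))$ and a final bound of $O(1/(\e^6\theta))$; these are inconsistent, since the dual iteration count scales like $1/\mu_d$, and $\mu_d=\Omega(\theta^2)$ would give $O(1/(\e^6\theta^2))$. The paper closes this gap with two dedicated lemmas: it first shows the feasible set of the regularized subproblem $\mathcal P_{x_{k-1}}$ has squared diameter $R_k^2 \leq 8\beta_k/\mu_g$, so that $\mu_d \geq \beta_k^2/(4R_k^2 M_L) = \Omega(\beta_k)$ (linear, not quadratic, in $\beta_k$), and then uses the extended MFCQ assumption to show $\beta_k \geq \min\{\theta,\mu_g l^2/(2\rho_g^2)\}$, which together yield the stated $1/\theta$ factor. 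Separately, the warm-start issue you flag as the main obstacle --- choosing $\check\lambda_{k}$ large enough that descent on $\mathcal K_{x_{k-1}}(\cdot,\check\lambda_k)$ stays feasible and $\check x_k$ satisfies the regularized constraint, yet controlled enough not to destroy the potential decrease --- is indeed where the paper spends most of its technical effort, resolving it with the explicit recursion $\check\lambda_k \geq 2\lambda_{k-1} + \rho_f/\rho_g + (\eta_{k-1}+\check\eta_k)/(-g(x_{k-1}))$ and the bound $-g(x_{k-1}) \geq \tfrac{\rho_g}{2}\|x_{k-1}-x_{k-2}\|^2$; your plan correctly identifies the difficulty but does not supply this argument.
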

  \begin{proof}
       First, note that $\phi_k$ is lower bounded. Second, let us bound the improvement per iteration. \\
        \textit{Bounding an improvement per iteration.}
 From the definition: 
    \begin{align}
         \phi_{k+1} - \phi_k &= \mathcal K_{x_k}(x_{k+1}, \lambda_{k+1}) - \mathcal K_{x_{k-1}}(x_{k}, \lambda_{k}) \nonumber \\
         & =  \mathcal K_{x_k}(x_{k+1}, \lambda_{k+1}) - \mathcal K_{x_k}(x_{k}, \lambda_{k+1}) \label{eq:pot_1} \\
        & + \mathcal K_{x_k}(x_{k}, \lambda_{k+1}) - \mathcal K_{x_{k-1}}(x_{k}, \lambda_{k+1}) \label{eq:pot_2}\\
        & +  \mathcal K_{x_{k-1}}(x_{k}, \lambda_{k+1}) - \mathcal K_{x_{k-1}}(x_{k}, \lambda_{k}).\label{eq:pot_3}
    \end{align}
     Then, since $x_{k+1}$ is an 
     $\e_{k+1}$-approximate minimizer of $\mathcal K_{x_k}(\cdot,\lambda_{k+1})$, and using strong-convexity, \Cref{eq:pot_1} we bound by 
     \begin{align*}
        \mathcal K_{x_k}(x_{k+1}, \lambda_{k+1}) - \mathcal K_{x_k}(x_{k}, \lambda_{k+1}) 
         & \leq -\la\nabla\mathcal K_{x_k}(x_{k+1}, \lambda_{k+1}), x_{k+1} -x_k\ra - \frac{\mu_K}{2}\|x_k - x_{k+1}\|^2\\
         & \leq \|\nabla\mathcal K_{x_k}(x_{k+1}, \lambda_{k+1})\|\|x_k - x_{k+1}\| - \frac{\mu_K}{2}\|x_k - x_{k+1}\|^2\\
         & \leq \e_{p,k+1}\|x_k - x_{k+1}\| - \frac{\mu_K}{2}\|x_k - x_{k+1}\|^2
         . 
     \end{align*} 
     
     For \Cref{eq:pot_3}, 
     from linearity of $\mathcal K_{x_{k-1}}(x_k, \cdot)$ on $\lambda$, $(\e_{p},\e_{c})$-KKT condition on $\mathcal K_{x_{k-1}}$, $\lambda_{k+1}\geq 0$ and $ g(x_{k})\leq 0$ we have:
     \begin{align*}
          \mathcal K_{x_{k-1}}(x_{k}, \lambda_{k+1}) - \mathcal K_{x_{k-1}}(x_{k}, \lambda_{k}) 
         &= ( \lambda_{k+1} - \lambda_{k})  (g(x_k)+ \frac{\rho_g}{2}\|x_k - x_{k-1}\|^2)\\ 
         & = \lambda_{k+1}  g(x_k) + \lambda_{k+1}\frac{\rho_g}{2}\|x_{k} - x_{k-1}\|^2 +  \underbrace{\lambda_k(-g(x_{k})) - \lambda_k\frac{\rho_g}{2}\|x_{k} - x_{k-1}\|^2}_{\leq  \e_{c}} ) \\
         & \leq - \lambda_{k+1}( -g(x_k)  -\frac{\rho_g}{2}\|x_{k} - x_{k-1}\|^2) +  \e_{c}. 
     \end{align*} 
     And finally, for \Cref{eq:pot_2} we have
    \begin{align*}
         \mathcal K_{x_k}(x_{k}, \lambda_{k+1}) - \mathcal K_{x_{k-1}}(x_{k}, \lambda_{k+1}) 
         &= \left(\frac{\rho_f}{2}+\lambda_{k+1}\frac{\rho_g}{2}\right)\|x_k - x_k\|^2 - \left(\frac{\rho_f}{2}+\lambda_{k+1}\frac{\rho_g}{2}\right)\|x_k - x_{k-1}\|^2\\ 
         & = -\frac{\rho_f + \lambda_{k+1} \rho_g}{2}  \|x_k - x_{k-1}\|^2.
    \end{align*}
    Therefore, we have 
    \begin{align*}
     \phi_{k+1} - \phi_k  &\leq \e_{p,k+1}\|x_k - x_{k+1}\| - \frac{\mu_K}{2}\|x_k - x_{k+1}\|^2 - \lambda_{k+1}( -g(x_k)  -\frac{\rho_g}{2}\|x_{k} - x_{k-1}\|^2)+ \e_{c} -\frac{\rho_f + \lambda_{k+1} \rho_g}{2}  \|x_k - x_{k-1}\|^2
   \\
    & = (\e_{c} + \e_{p,k+1}\|x_{k+1} - x_{k}\|) - \frac{\rho_f}{2}  \|x_k - x_{k-1}\|^2 \\
    & - \frac{\mu_K}{2}\|x_k - x_{k+1}\|^2 - \lambda_{k+1}(-g(x_k))
    .
    \end{align*}
     \textit{Bounding number of outer steps.} Summing up over $k$, we get
    \begin{align*}
    & \phi_0 - \phi^*  \geq \phi_0 - \phi_K = \sum_{k=1}^K \phi_{k} - \phi_{k+1} \\
    & \geq \sum_{k=1}^K-(\e_{c} + \e_{p,k+1}\|x_{k+1} - x_{k}\|) + \frac{\rho_f}{2}  \|x_k - x_{k-1}\|^2 + \frac{\mu_K}{2}\|x_k - x_{k+1}\|^2 + \lambda_{k+1}(-g(x_k))\\
    & \geq -\sum_{k=1}^K(\e_{c} + \e_{p}\|x_{k} - x_{k-1}\|) + \sum_{k=1}^K\frac{\rho_f+\mu_K}{2}  \|x_k - x_{k-1}\|^2 + \lambda_{k+1}(-g(x_k))\\
    & \geq -\sum_{k=1}^K(\e_{c} + \e_{p}\|x_{k} - x_{k-1}\|) + \sum_{k=1}^K\frac{\rho_f+\mu_K +\lambda_{k+1}\rho_g}{2}  \|x_k - x_{k-1}\|^2\\
    & =  \sum_{k=1}^K\left(\underbrace{\left(\underbrace{\frac{\rho_f + \mu_K +\lambda_{k+1}\rho_g}{2}  \|x_k - x_{k-1}\| - \e_{p}}_{(1)}\right)\|x_k - x_{k-1}\| -  \e_{c}}_{(2)}\right).
    \end{align*}
    The last inequality is by safety of the updates $g(x_{k})+\frac{\rho_g}{2}\|x_{k} - x_{k-1}\|^2\leq 0$, i.e., 
        $-g(x_{k}) \geq \frac{\rho_g}{2}\|x_{k} - x_{k-1}\|^2.$
    Then, $(1) \geq \frac{\rho_f+\mu_K +\lambda_{k+1}\rho_g}{4} \|x_k - x_{k-1}\| $ if 
    \begin{align}\label{eq:e_pk}
    \e_{p} \leq \frac{\rho_f+\mu_K +\lambda_{k+1}\rho_g}{4} \|x_k - x_{k-1}\|.
    \end{align}
    
    Also, $(2) \geq \frac{\rho_f+\mu_K +\lambda_{k+1}\rho_g}{8} \|x_k - x_{k-1}\|^2 $ if 
    \begin{align}\label{eq:e_k}
    \e_{c} \leq \frac{\rho_f+\mu_K +\lambda_{k+1}\rho_g}{4} \|x_k - x_{k-1}\|^2.
    \end{align}
    
    In this case, we have:
    \begin{align*}
      \phi_0 - \phi^* & \geq  \sum_{k=1}^K\frac{\rho_f+\mu_K +\lambda_{k+1}\rho_g}{2}  \|x_k - x_{k-1}\|^2.
    \end{align*}
    Recall that before stopping,
    $\|x_k - x_{k-1}\| \geq \min\left\{ \frac{\e_{p}}{\rho_f + \Lambda\rho_g},\sqrt{\frac{2\e_{c}}{\Lambda\rho_g}}\right\}.$
    The above two inequalities imply the following bound on the number of steps:
    \begin{align*}
      K & \leq  \frac{2(\phi_0 - \phi^*)}{\rho_f+\mu_K +\max_k\lambda_{k+1}\rho_g}  \frac{1}{\min_k\|x_k - x_{k-1}\|^2}\\ 
      &\leq  
      \frac{2(\phi_0 - \phi^*)}{\rho_f+\mu_K +\Lambda\rho_g}
      \frac{1}{ \min\left\{ \frac{\e_{p}^2}{(\rho_f + \Lambda\rho_g)^2},
      \frac{2\e_{c}}{\Lambda\rho_g}\right\}} \\
      & = O\left(\max\left\{\frac{1}{\e_{p}^2}, \frac{1}{\e_{c}}\right\}\right) .
    \end{align*}

   
\textit{Bounding sample complexity} Recall that, the sample complexity of Algorithm \ref{alg:base} to solve the problem up to accuracy $(\e_{p},\e_{c})$ according to Theorem \ref{theorem:str_cvx_conv} is $ \mathcal N_k = O(\frac{\Lambda_k}{\beta_k} + \frac{R_k^2}{\beta_k^2}\log\frac{1}{\e_{c}})O\left(\frac{\sigma^2}{\e_{c}^2}+\frac{(L + \sigma)^2}{\mu\e_{p}^2}\right) = \tilde O(\frac{1}{\beta_k}\left(\frac{1}{\e_{c}^2} + \frac{1}{\e_{p}^2}\right)).$ 
Then, the total number of samples would be \begin{align*}
& \mathcal N = K\mathcal N_k  = O\left( \max\left\{\frac{1}{\e_{p}^2}, \frac{1}{\e_{c}}\right\} \right)
\tilde O\left(\frac{\check\lambda_{k}}{\beta_k}+ \frac{1}{\mu_d}\right)O\left(\frac{1}{\e_{c}^2} + \frac{1}{\mu\e_{p}^2}\right),
\end{align*}
 where 
 $$\beta_k := -\min_{x\in\R^d}\{g(x) + \frac{\rho_g}{2}\|x - x_{k-1}\|^2\}.$$
In the next who lemmas, we lower bound $\mu_d$ and $ \beta_k$.
\begin{lemma}\label{lemma:mud_lb}
We can lower bound $\mu_d$ of a subproblem $\mathcal P_{x_{k-1}}$ as follows:
    $\mu_d \geq  \frac{\beta_k \mu_g}{32 (M_f + \lambda M_g)}$.
\end{lemma} 
\begin{proof}
    First, let us upper bound the diameter of the feasibility set for (\ref{sub-problem:nonconvex}). Let us define $\check x_{k} := \arg\min_{x\in\R^d} g(x) + \frac{\rho_g}{2}\|x - x_{k-1}\|^2$. For all feasible points $y$ at the boundaries of (\ref{sub-problem:nonconvex}) we have $\frac{\mu_f}{2}\|y - \check x_{k}\|^2 \leq g(y) + \frac{\rho_g}{2}\|y - x_{k-1}\|^2 - \min_x (g(x) + \frac{\rho_g}{2}\|x - x_{k-1}\|^2) \leq g(y) + \frac{\rho_g}{2}\|y - x_{k-1}\|^2 + \beta_k \leq \beta_{k} $,
    that implies $\forall y\in\mathcal X_{k}$ we have $\|y - \check x_{k}\|^2 \leq  \frac{2\beta_k}{\mu_g},$ that is $\forall y,z\in\mathcal X_{k}$ we have $ \|y - z\|^2 \leq 2\|y - \check x_{k}\|^2 +2\|y - \check x_{k}\|^2  \leq \frac{8\beta_k}{\mu_g},$ that is:
    $R_k^2\leq \frac{8\beta_k}{\mu_g}.$ Then, by using Lemma (\ref{lemma:local_dual_strong_concavity}:
    $ \mu_d\geq \frac{\beta_k^2}{4R_k^2(M_f + \lambda M_g)} \geq \frac{\beta_k \mu_g}{32 (M_f + \lambda M_g)}.$
\end{proof}

The lemma below allows to lower bound $\beta_k,$ 
under the regularity assumption similar to LB-SGD, extended MFCQ that we made above.

\begin{lemma}\label{lemma:betak_lb}
   Let Assumption \ref{assumption:mfcq} hold with constant $\theta>0$, then at step $k$ we can lower bound $\beta_k$ 
   as follows $\beta_k \geq \min\{\theta, \frac{\mu_g l^2}{2 \rho_g^2}\}.$ 
\end{lemma}
\begin{proof}
 Let us define $\check x_{k} := \arg\min_{x\in\R^d} g(x) + \frac{\rho_g}{2}\|x - x_{k-1}\|^2$. If $\hat x_k$ is at most $\theta$
 -close to the boundary $-g(\hat x_k) \leq \theta$, then it satisfies $\nabla g(\check x_{k}) + \rho_g (\check x_{k} - x_{k-1})  = 0$, what implies $\|\check x_{k} - x_{k-1}\| = \frac{\|\nabla g(\check x_{k})\|}{\rho_g} \geq \frac{l}{\rho_g}$ (by Assumption \ref{assumption:mfcq}).
 Then we have that the maximum value of the constraint at problem (\ref{sub-problem:nonconvex}) $\beta_k := \max_{x\in\R^d}\{-g(x) - \frac{\rho_g}{2}\|x - x_{k-1}\|^2\}$ is lower bounded by
 \begin{align*}
 \beta_k  &= -g(\check x_{k}) - \frac{\rho_g}{2}\|\check x_{k} - x_{k-1}\|^2 \\
 & \geq -g(x_{k-1}) + \frac{\mu_g}{2}\|\hat x_k - x_{k-1}\|^2
 \geq \frac{\mu_g l^2}{2 \rho_g^2}.
\end{align*}
In the alternative case, $\beta_k \geq \theta.$ In any case, we have 
\begin{align}
    \beta_k \geq \min\left\{\theta, \frac{\mu_g l^2}{2 \rho_g^2}\right\}.
\end{align} 
 \end{proof}

Combining two lemmas above, and putting their results into the bound of SCSA (Alg. \ref{alg:base}), we get: 
\begin{align*}
 \mathcal N & = K\mathcal N_k = O\left( \max\left\{\frac{1}{\e_{p}^2}, \frac{1}{\e_{c}}\right\} \right)
\tilde O\left(\frac{\check\lambda_{k}}{\beta_k}+ \frac{1}{\mu_d}\right)O\left(\frac{1}{\e_{c}^2} + \frac{1}{\mu\e_{p}^2}\right)\\
& =O\left( \max\left\{\frac{1}{\e_{p}^2}, \frac{1}{\e_{c}}\right\} \right)
\tilde O\left(\frac{\check\lambda_{k}}{\beta_k}+ \frac{32 (M_f + \check \lambda_k M_g)}{\beta_k \mu_g}\right)O\left(\frac{1}{\e_{c}^2} + \frac{1}{\mu\e_{p}^2}\right)\\
&= O\left( \max\left\{\frac{1}{\e_{p}^2}, \frac{1}{\e_{c}}\right\} \max \left\{\frac{1}{\theta}, \frac{2\rho_g^2}{\mu_g l^2}\right\}\right)
\tilde O\left(\check\lambda_{k}+ \frac{32 (M_f + \check \lambda_k M_g)}{ \mu_g}\right)O\left(\frac{1}{\e_{c}^2} + \frac{1}{\mu\e_{p}^2}\right).
\end{align*}
If $\theta$ is constant, or if we can make the bound it dependent on $\theta$, and given $\e_{c} = O(\e_{p}^2)$ (from Eq. (\ref{eq:e_k})), then the bound is $\tilde O(\frac{1}{\theta\e_{p}^6})$.
\end{proof}

\subsection{Proof of Lemma \ref{lemma:init_sgd}}\label{proof:lemma:init_sgd}
that at every approximate gradient step $\tau>0$ uses a mini-batch of $n_0(\tau) = \frac{4\sigma^2}{\|\nabla \tilde{\mathcal  L}(x_{\tau-1}, \check\lambda)\|^2}$ samples
      and the step-size $\gamma_{\tau}\leq \frac{1}{M_f + \check\lambda M_g}.$
 First, let us prove the descent.
 \begin{lemma}\label{lemma:init_disturb}
        Let the Lagrangian gradient estimator $\tilde \nabla \mathcal L(x,\lambda) = \nabla L(x,\lambda) + \xi$ (with $\xi = (1+\lambda)\zeta$ )
        be such that $\|\xi\|\leq \frac{\|\tilde \nabla \mathcal L(x,\lambda)\|}{2}$, then the stochastic gradient step with step size $\gamma \leq \frac{1}{M_f + \lambda M_g}$ implies descent.
        \end{lemma}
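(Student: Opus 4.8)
The plan is to derive the descent from the $(M_f+\lambda M_g)$-smoothness of $\mathcal L(\cdot,\lambda)$ together with the assumed bound $\|\xi\|\le\tfrac12\|\tilde\nabla\mathcal L(x,\lambda)\|$ on the realized noise. First I would record that, by Assumption~\ref{assumption:smoothness}, $f$ is $M_f$-smooth and $g$ is $M_g$-smooth, so for every $\lambda\ge 0$ the map $x\mapsto\mathcal L(x,\lambda)=f(x)+\lambda g(x)$ is $M_L$-smooth with $M_L:=M_f+\lambda M_g$. Writing the step as $x^+=x-\gamma\tilde\nabla\mathcal L(x,\lambda)$ and applying the quadratic upper bound from smoothness gives
\[
\mathcal L(x^+,\lambda)\le \mathcal L(x,\lambda)-\gamma\la\nabla\mathcal L(x,\lambda),\tilde\nabla\mathcal L(x,\lambda)\ra+\tfrac{M_L\gamma^2}{2}\|\tilde\nabla\mathcal L(x,\lambda)\|^2 .
\]

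Next I would substitute $\nabla\mathcal L(x,\lambda)=\tilde\nabla\mathcal L(x,\lambda)-\xi$ and use Cauchy--Schwarz with $\|\xi\|\le\tfrac12\|\tilde\nabla\mathcal L(x,\lambda)\|$ to bound $\la\xi,\tilde\nabla\mathcal L(x,\lambda)\ra\le\tfrac12\|\tilde\nabla\mathcal L(x,\lambda)\|^2$, which turns the inequality into
\[
\mathcal L(x^+,\lambda)\le \mathcal L(x,\lambda)-\tfrac{\gamma}{2}\bigl(1-M_L\gamma\bigr)\|\tilde\nabla\mathcal L(x,\lambda)\|^2 .
\]
Then, since $\gamma\le 1/M_L=1/(M_f+\lambda M_g)$ makes the factor $1-M_L\gamma$ nonnegative, the right-hand side is at most $\mathcal L(x,\lambda)$, which is exactly the claimed descent (and strict descent whenever $\tilde\nabla\mathcal L(x,\lambda)\neq 0$ and $\gamma<1/M_L$). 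As a by-product I would also note that $\|\xi\|\le\tfrac12\|\tilde\nabla\mathcal L(x,\lambda)\|$ forces $\|\tilde\nabla\mathcal L(x,\lambda)\|\ge\tfrac23\|\nabla\mathcal L(x,\lambda)\|$, so the per-step decrease is in fact of order $\gamma\|\nabla\mathcal L(x,\lambda)\|^2$, which is what the convergence count $\mathcal T=O(\log\frac1{\check\eta})$ in Lemma~\ref{lemma:init_sgd} uses.

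There is no genuine obstacle in this lemma; it is the deterministic core of the initialization analysis. The only point requiring care is that the hypothesis $\|\xi\|\le\tfrac12\|\tilde\nabla\mathcal L(x,\lambda)\|$ is imposed on the \emph{realized} estimator — this is precisely what the mini-batch size $n_0(\tau)=\tfrac{4\sigma^2}{\|\tilde\nabla\mathcal L(x_{\tau-1},\check\lambda)\|^2}$ in Lemma~\ref{lemma:init_sgd} is chosen to enforce (via sub-Gaussian concentration, up to the confidence level), so the present lemma should be stated as a conditional, deterministic implication and then combined with that concentration bound in the proof of Lemma~\ref{lemma:init_sgd}.
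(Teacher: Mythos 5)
Your proof is correct and follows essentially the same route as the paper's: apply the $(M_f+\lambda M_g)$-smoothness descent inequality to the step $x^+=x-\gamma\tilde\nabla\mathcal L(x,\lambda)$, substitute $\nabla\mathcal L=\tilde\nabla\mathcal L-\xi$, and absorb the cross term via Cauchy--Schwarz and the hypothesis $\|\xi\|\le\tfrac12\|\tilde\nabla\mathcal L\|$. Your extra remarks (the $\tfrac23\|\nabla\mathcal L\|$ lower bound and the role of the mini-batch size in enforcing the noise condition) are accurate and consistent with how the paper uses this lemma inside Lemma~\ref{lemma:init_sgd}.
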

        
        \begin{proof}
        We consider steps defined by $x_{t+1} = x_t - \gamma_t \tilde \nabla L(x,\lambda).$ From smoothness:
            \begin{align*}
               L(x_{t+1},\lambda)& \leq L(x_{t},\lambda) + \la -\gamma_t \tilde \nabla L(x,\lambda), \nabla L(x,\lambda)\ra + \frac{M}{2}\gamma_t^2\|\tilde \nabla L(x,\lambda)\|^2\\
               L(x_{t+1},\lambda) - L(x_{t},\lambda) & \leq -\gamma_t  \left( \la \tilde \nabla L(x,\lambda), \tilde\nabla L(x,\lambda) - \xi\ra - \frac{1}{2}\|\tilde \nabla L(x,\lambda)\|^2\right)\\
               & \leq -\gamma_t  \left( \frac{1}{2}\|\tilde \nabla L(x,\lambda)\|^2 - \|\tilde \nabla L(x,\lambda)\| \| \xi\|\right).
            \end{align*}
            Using $\|\xi\|\leq \frac{\|\tilde \nabla L(x,\lambda)\|}{2}$ we get
            $$L(x_{t+1},\lambda) - L(x_{t},\lambda)  \leq -\gamma_t  \left( \frac{1}{2}\|\tilde \nabla L(x,\lambda)\|^2 - \|\tilde \nabla L(x,\lambda)\| \| \xi\|\right) \leq 0,$$ what concludes the proof.
        \end{proof} 
         To get such an estimation accuracy $\|\xi\|\geq \|\tilde \nabla L(x,\lambda)\|/2 $ until we reach $\|\tilde \nabla L(x,\lambda)\|\leq \e_p^0$, one requires $n_0 = \frac{\hat\sigma^2}{(\e_p^0)^2}$. Then, for convergence of such a descent method to accuracy $ \e_p^0$  for strongly-convex problems $T = \tilde O(\log \frac{1}{\e_p^0})$ steps are required, 
    that implies
    $\mathcal N_0 = O(\frac{1}{(\e_p^0)^2}\log \frac{1}{\e_p^0}).$
\subsection{Safety}
Note that for safety, $\check \lambda_{k}$ must be chosen so that $-g(\check x_{k}) 
- \frac{\rho_g}{2}\|\check x_{k} - x_{k-1}\|^2 > 0,$ and that descent on $\mathcal K_{x_{k-1}}(\cdot,\check \lambda_{k})$ implies feasibility subject to $g(x).$
\begin{lemma}
    If $\check \lambda_{k}$ is chosen such that $\check \lambda_{k}\geq \frac{2\eta_{k-1}}{\rho_g \|x_{k-1} - x_{k-2}\|^2} + \frac{\rho_f}{\rho_g} + (2-\frac{\mu_g}{\rho_g})\lambda_{k-1} $, then any descent method on $\mathcal K_{x_{k-1}}(\cdot, \check\lambda_{k})$ starting from $x_{k-1}$ guarantees feasibility of all its iterates $\{ x_{t}\}$ subject to $g(x).$ 
\end{lemma}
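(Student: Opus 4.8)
The plan is to follow the template of Lemma~\ref{lemma:safe_init}, with $f$ replaced by a Moreau-type regularization of $f$ recentred at $x_{k-1}$. A descent method applied to $\mathcal K_{x_{k-1}}(\cdot,\check\lambda_k)$ started at $x_{k-1}$ produces iterates with $\mathcal K_{x_{k-1}}(x_t,\check\lambda_k)\le\mathcal K_{x_{k-1}}(x_{k-1},\check\lambda_k)$ for every $t$; this is the only property of the inner solver that will be used. Writing $F_{k-1}(z):=f(z)+\tfrac{\rho_f}{2}\|z-x_{k-1}\|^2$, expanding this inequality (the quadratic penalties vanish at the centre $x_{k-1}$), and rearranging exactly as in the proof of Lemma~\ref{lemma:safe_init}, one gets for every iterate
\[
g(x_t)+\tfrac{\rho_g}{2}\|x_t-x_{k-1}\|^2\;\le\;\frac{F_{k-1}(x_{k-1})-F_{k-1}(x_t)}{\check\lambda_k}+g(x_{k-1})\;\le\;\frac{F_{k-1}(x_{k-1})-\min_z F_{k-1}(z)}{\check\lambda_k}+g(x_{k-1}).
\]
Since $\tfrac{\rho_g}{2}\|x_t-x_{k-1}\|^2\ge0$, it suffices to make the right side nonpositive, i.e.\ to ensure
\[
\check\lambda_k\,\bigl(-g(x_{k-1})\bigr)\;\ge\;F_{k-1}(x_{k-1})-\min_z F_{k-1}(z),
\]
which is precisely the condition that the sublevel set $\{\mathcal K_{x_{k-1}}(\cdot,\check\lambda_k)\le\mathcal K_{x_{k-1}}(x_{k-1},\check\lambda_k)\}$ lie inside $\{g\le0\}$ (indeed inside the regularized feasible set of $\mathcal P_{x_{k-1}}$).

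To verify this from the prescribed value of $\check\lambda_k$ I would use two facts about $x_{k-1}$ inherited from the previous outer iteration. First, $x_{k-1}$ is the output of $\text{SCSA}(\mathcal P_{x_{k-2}})$ and hence feasible for $\mathcal P_{x_{k-2}}$, so $g(x_{k-1})+\tfrac{\rho_g}{2}\|x_{k-1}-x_{k-2}\|^2\le0$, giving the key bound $-g(x_{k-1})\ge\tfrac{\rho_g}{2}\|x_{k-1}-x_{k-2}\|^2$. Multiplying the stated lower bound on $\check\lambda_k$ by $-g(x_{k-1})$ and using this, together with the identity $2-\tfrac{\mu_g}{\rho_g}=\tfrac{\rho_g+M_g}{\rho_g}$ (recall $\mu_g=\rho_g-M_g$), the required inequality reduces to
\[
F_{k-1}(x_{k-1})-\min_z F_{k-1}(z)\;\le\;\eta_{k-1}+\tfrac{\rho_f}{2}\|x_{k-1}-x_{k-2}\|^2+\tfrac{\rho_g+M_g}{2}\,\lambda_{k-1}\,\|x_{k-1}-x_{k-2}\|^2.
\]
The second fact is that $x_{k-1}$ solves $\min_x\mathcal K_{x_{k-2}}(x,\lambda_{k-1})$ to accuracy $\eta_{k-1}$. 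Comparing this subproblem at $z=x^\star:=\arg\min_z F_{k-1}(z)$ gives $\mathcal K_{x_{k-2}}(x_{k-1},\lambda_{k-1})\le\mathcal K_{x_{k-2}}(x^\star,\lambda_{k-1})+\eta_{k-1}$; expanding both sides, subtracting $\tfrac{\rho_f}{2}\|x^\star-x_{k-1}\|^2$ to recover $F_{k-1}(x_{k-1})=f(x_{k-1})$ and $F_{k-1}(x^\star)$, rewriting the centre shift through $\|x^\star-x_{k-2}\|^2=\|x^\star-x_{k-1}\|^2+\|x_{k-1}-x_{k-2}\|^2+2\langle x^\star-x_{k-1},x_{k-1}-x_{k-2}\rangle$, and bounding $\lambda_{k-1}\bigl(g(x^\star)-g(x_{k-1})\bigr)$ via the $(\rho_g+M_g)$-smoothness of the regularized constraint $g(\cdot)+\tfrac{\rho_g}{2}\|\cdot-x_{k-2}\|^2$, the cross terms $\langle x^\star-x_{k-1},\cdot\rangle$ collapse by combining the first-order optimality of $x^\star$ for $F_{k-1}$ (namely $\nabla f(x^\star)=\rho_f(x_{k-1}-x^\star)$) with the near-stationarity of $x_{k-1}$ for $\mathcal K_{x_{k-2}}(\cdot,\lambda_{k-1})$; the surviving terms are exactly the right-hand side above, the coefficient $\tfrac{\rho_g+M_g}{2}$ being the smoothness constant of the regularized constraint, which is why the statement carries the factor $2-\tfrac{\mu_g}{\rho_g}$.

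I expect this last step — absorbing the inner-product cross terms with sharp constants — to be the main obstacle, because the strong-convexity parameter $\mu_f=\rho_f-M_f$ supplied by the regularization is modest, so any slack in estimating $\langle x^\star-x_{k-1},\nabla f(x_{k-1})\rangle$ or the term involving $\nabla g(x_{k-1})$ would destroy the bound; this is precisely where the requirements $\rho_f>M_f$, $\rho_g>M_g$ and the exact combination appearing in $\check\lambda_k$ (and in the warm start $\hat\lambda_{k+1}$ of Step~4) are used. Two minor points remain: if $x_{k-1}=x_{k-2}$ the claim is vacuous since Step~3 would already have triggered the stopping criterion, so one may take $\|x_{k-1}-x_{k-2}\|>0$; and "descent method" is invoked only through the single function-value inequality above, so nothing about the inner solver beyond monotone decrease is assumed.
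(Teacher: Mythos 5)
Your overall frame is the right one: descent gives the sublevel-set inequality, $-g(x_{k-1})\geq \tfrac{\rho_g}{2}\|x_{k-1}-x_{k-2}\|^2$ converts the prescribed $\check\lambda_k$ into an absolute bound, and the $\eta_{k-1}$-optimality of $x_{k-1}$ for $\mathcal K_{x_{k-2}}(\cdot,\lambda_{k-1})$ is indeed the source of the remaining estimates. However, the reduction in your first display has a genuine gap: replacing $F_{k-1}(x_t)$ by $\min_z F_{k-1}(z)$ forces you to prove
\[
F_{k-1}(x_{k-1})-\min_z F_{k-1}(z)\;\le\;\eta_{k-1}+\tfrac{\rho_f}{2}\|x_{k-1}-x_{k-2}\|^2+\tfrac{\rho_g+M_g}{2}\,\lambda_{k-1}\|x_{k-1}-x_{k-2}\|^2,
\]
and this is false in general. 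The left side is the Moreau-type gap of $f$ at $x_{k-1}$, which by smoothness/strong convexity of $F_{k-1}$ is of order $\|\nabla f(x_{k-1})\|^2/\rho_f$; near a boundary KKT point the near-stationarity of $x_{k-1}$ gives $\nabla f(x_{k-1})\approx-\lambda_{k-1}\nabla g(x_{k-1})+O(\|x_{k-1}-x_{k-2}\|)$, so the gap stays bounded away from zero (roughly $\lambda_{k-1}^2\|\nabla g(x_{k-1})\|^2/\rho_f$), while your right side tends to the small quantity $\eta_{k-1}$ as $\|x_{k-1}-x_{k-2}\|\to0$. Concretely, with $f(x)=-cx$, $g(x)=x$ and $x_{k-1}=-\delta$ close to the optimum ($\lambda_{k-1}\approx c$), the gap is $c^2/(2\rho_f)$ but $\check\lambda_k\,(-g(x_{k-1}))=\check\lambda_k\delta\to0$, so no $\check\lambda_k$ of the prescribed size can satisfy your sufficient condition --- even though the lemma itself holds there. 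The cross term $\langle x_{k-1}-x^\star,\nabla f(x_{k-1})\rangle$ you flag as "the main obstacle" is not an obstacle to be absorbed; it is exactly the non-vanishing part of the gap, so no sharpening of constants can rescue this route.

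The paper avoids this by never passing to $\min_z F_{k-1}(z)$: it compares $\mathcal K_{x_{k-2}}(x_{k-1},\lambda_{k-1})$ against $\mathcal K_{x_{k-2}}(x_t,\lambda_{k-1})$, i.e., against the \emph{current} iterate of the new descent sequence at the \emph{old} multiplier. This keeps a term $+\lambda_{k-1}g(x_t)$ on the right-hand side of the bound for $f(x_{k-1})-f(x_t)-\tfrac{\rho_f}{2}\|x_t-x_{k-1}\|^2$; moving it to the left turns the descent inequality into a bound on $(\check\lambda_k-\lambda_{k-1})\,g(x_t)$, so only the \emph{difference} $\check\lambda_k-\lambda_{k-1}$ must dominate the residual terms $\eta_{k-1}$, the parallelogram remainder $(\tfrac{\rho_f}{2}+\lambda_{k-1}\tfrac{\rho_g}{2})(\|x_{k-1}-x_{k-2}\|^2+\|x_t-x_{k-1}\|^2)$, and the negative strong-convexity term. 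The dangerous $-\lambda_{k-1}\nabla g$ component of $\nabla f(x_{k-1})$ is thereby cancelled exactly rather than estimated, which is why the prescribed $\check\lambda_k$ only needs to grow like $2\lambda_{k-1}$ plus lower-order terms (the factor $2-\mu_g/\rho_g$ equals $1+M_g/\rho_g$ numerically, but it arises as $1$ for absorbing $\lambda_{k-1}g(x_t)$ plus $1-\mu_g/\rho_g$ from the quadratic remainders, not as the smoothness constant of the regularized constraint). To repair your proof, keep $x_t$ in the comparison throughout and only at the end impose non-positivity of the right-hand side.
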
 
\begin{proof}
 Let $\{x_t\}$ be a descent sequence for minimizing $\mathcal \mathcal K_{x_{k-1}}(x,\check \lambda_k)$ such that $\mathcal K_{x_{k-1}}(x_t,\check \lambda_k) \leq\mathcal K_{x_{k-1}}( x_{k-1},\check \lambda_k).$ That means:
    \begin{align}\label{eq:K_descent}
       f(x_t) + \frac{\rho_f}{2}\|x_t - x_{k-1}\|^2 &+ \check \lambda_k (g(x_t) + \frac{\rho_g}{2}\|x_t - x_{k-1}\|^2 )\leq f(x_{k-1}) + \check \lambda_k g( x_{k-1}).
    \end{align}
%
    
    About $x_{k-1}$ we know:
    \begin{align}
    & \lambda_{k-1}( g(x_{k-1}) + \frac{\rho_g}{2}\|x_{k-1} - x_{k-2}\|^2) \leq \e_{c,k-1},\\
    &\|\nabla f(x_{k-1}) + \rho_f(x_{k-1} - x_{k-2}) + \lambda_{k-1}(\nabla g(x_{k-1}) + \rho_g(x_{k-1} - x_{k-2}))\|\leq\e_{p,k-1}
    \end{align}
    and by strong-convexity and  $\eta_k$-optimality we have
    \begin{align}
    &f(x_{k-1}) + \frac{\rho_f}{2}\|x_{k-1} - x_{k-2}\|^2 + \lambda_{k-1} ( g(x_{k-1}) + \frac{\rho_g}{2}\|x_{k-1} - x_{k-2}\|^2) \\
    & \leq \min_{x}(f(x) + \frac{\rho_f}{2}\|x - x_{k-2}\|^2 + \lambda_{k-1} ( g(x) + \frac{\rho_g}{2}\|x - x_{k-2}\|^2)) +\eta_{k-1}\\
    &
    \leq  f(x_t) + \frac{\rho_f}{2}\|x_t - x_{k-2}\|^2 + \lambda_{k-1} ( g(x_t) + \frac{\rho_g}{2}\|x_t - x_{k-2}\|^2) +\eta_{k-1} - \frac{\mu_{k-1}}{2}\|x_t - x_{k-1}\|^2,
    \end{align}
    where $ \mu_{k-1}:= \mu_f + \lambda_{k-1}\mu_g$. 
    Hence, 
       \begin{align*}
        & f(x_{k-1}) - f(x_t)- \frac{\rho_f}{2}\|x_t - x_{k-1}\|^2  \leq  \underbrace{\frac{\rho_f}{2}\|x_t - x_{k-2}\|^2- \frac{\rho_f}{2}\|x_t- x_{k-1}\|^2  - \frac{\rho_f}{2}\|x_{k-1} - x_{k-2}\|^2}_{\frac{\rho_f}{2}(1)} \\
        & + \lambda_{k-1} ( g(x_t) + \frac{\rho_g}{2}\|x_t - x_{k-2}\|^2 -  g(x_{k-1}) - \frac{\rho_g}{2}\|x_{k-1} - x_{k-2}\|^2) + \eta_{k-1} - \frac{\mu_{k-1}}{2}\|x_t - x_{k-1}\|^2\\
        & \leq \frac{\rho_f}{2}(1) + \lambda_{k-1} (-  g(x_{k-1}) + \frac{\rho_g}{2}\|x_t - x_{k-1}\|^2) - \frac{\mu_{k-1}}{2}\|x_t - x_{k-1}\|^2\\
        & + \lambda_{k-1}( g(x_t) + \underbrace{\frac{\rho_g}{2}\|x_t - x_{k-2}\|^2  -\frac{\rho_g}{2}\|x_t - x_{k-2}\|^2  - \frac{\rho_g}{2}\|x_{k-1} - x_{k-2}\|^2}_{\frac{\rho_g}{2}(1)})  + \eta_{k-1}
        \end{align*}
        \begin{align}\label{eq:fs}
        & f(x_{k-1}) - f(x_t)- \frac{\rho_f}{2}\|x_t - x_{k-1}\|^2 \\
        & \leq \lambda_{k-1} (-  g(x_{k-1}) + \frac{\rho_g}{2}\|x_t - x_{k-1}\|^2) + \left(\frac{\rho_f}{2} + \lambda_{k-1}\frac{\rho_g}{2}\right)(1) +  \lambda_{k-1} g(x_{t})+\eta_{k-1} - \frac{\mu_{k-1}}{2}\|x_t - x_{k-1}\|^2.\nonumber
        \end{align}
        Recall from \cref{eq:K_descent} that $x_t$ is a descent sequence, so:
    \begin{align*}
       f(x_t) &+ \frac{\rho_f}{2}\|x_t - x_{k-1}\|^2 
       + \check \lambda_k (g(x_t) + \frac{\rho_g}{2}\|x_t - x_{k-1}\|^2 )\leq f(x_{k-1}) + \check \lambda_k g( x_{k-1}).\\
         \check \lambda_k g(x_t)   & \leq f(x_{k-1})  - f(x_t) - \frac{\rho_f}{2}\|x_t - x_{k-1}\|^2 + \check \lambda_k(g(x_{k-1}) - \frac{\rho_g}{2}\|x_t - x_{k-1}\|^2)\\
         & \leq \lambda_{k-1} (-  g(x_{k-1}) + \frac{\rho_g}{2}\|x_t - x_{k-1}\|^2) + \left(\frac{\rho_f}{2} + \lambda_{k-1}\frac{\rho_g}{2}\right)(1) \\
         &+  \lambda_{k-1} g(x_{t})+\eta_{k-1} -  \check \lambda_k(-g(x_{k-1}) +\frac{\rho_g}{2}\|x_t - x_{k-1}\|^2) - \frac{\mu_{k-1}}{2}\|x_t - x_{k-1}\|^2.
         \end{align*}
         where the second inequality follows from Eq. (\ref{eq:fs}). 
         That implies 
         \begin{align*}
          (\check \lambda_k -  \lambda_{k-1})g(x_{t})& \leq \eta_{k-1} +\left(\frac{\rho_f}{2} + \lambda_{k-1}\frac{\rho_g}{2}\right)(1) + (\check \lambda_k -  \lambda_{k-1})(g(x_{k-1}) - \frac{\rho_g}{2}\|x_t - x_{k-1}\|^2) - \frac{\mu_{k-1}}{2}\|x_t - x_{k-1}\|^2.
    \end{align*}
    \begin{align}\label{eq:g_bound}
         g(x_{t}) & \leq \frac{\eta_{k-1} +\left(\frac{\rho_f}{2} + \lambda_{k-1}\frac{\rho_g}{2}\right)(1)}{(\check \lambda_k -  \lambda_{k-1})} + g(x_{k-1}) - \frac{\rho_g}{2}\|x_t - x_{k-1}\|^2 - \frac{\mu_{k-1}}{2}\|x_t - x_{k-1}\|^2.
    \end{align}
    We want to guarantee feasibility of all the iterates, that is, $g(x_t) \leq 0$, i.e.,:
    $$\frac{\eta_{k-1} +\left(\frac{\rho_f}{2} + \lambda_{k-1}\frac{\rho_g}{2}\right)(1) - \frac{\mu_{k-1}}{2}\|x_t - x_{k-1}\|^2}{(\check \lambda_k -  \lambda_{k-1})} + g(x_{k-1}) - \frac{\rho_g}{2}\|x_t - x_{k-1}\|^2 \leq 0$$
    which holds when
    $$\check \lambda_k  -  \lambda_{k-1}
    \geq 
    \frac{\eta_{k-1} +\left(\frac{\rho_f}{2} + \lambda_{k-1}\frac{\rho_g}{2}\right)(1)- \frac{\mu_{k-1}}{2}\|x_t - x_{k-1}\|^2
    }{
    - g(x_{k-1}) + \frac{\rho_g}{2}\|x_t - x_{k-1}\|^2
    }.$$
    Therefore, it is enough to set
    $$\check \lambda_k  \geq \lambda_{k-1} +  
    \frac{
    \eta_{k-1} +\left(\frac{\rho_f}{2} + \lambda_{k-1}\frac{\rho_g}{2}\right)(\|x_{k-1} - x_{k-2}\|^2+\|x_t - x_{k-1}\|^2) - \frac{\mu_{k-1}}{2}\|x_t - x_{k-1}\|^2
    }{
    - g(x_{k-1}) + \frac{\rho_g}{2}\|x_t - x_{k-1}\|^2 
    },$$
    since $(1)=\|x_t - x_{k-2}\|^2 - \|x_t - x_{k-1}\|^2  - \|x_{k-1} - x_{k-2}\|^2  \leq \|x_{k-1} - x_{k-2}\|^2 + \|x_{t} - x_{k-1}\|^2.$ 
    Moreover, we know that $$- g(x_{k-1}) \geq  \frac{\rho_g}{2}\|x_{k-1} - x_{k-2}\|^2.$$
    Therefore, we can upper bound the required increase by: 
    \begin{align}
    \check \lambda_k - \lambda_{k-1}&  \geq
    \frac{
    \eta_{k-1} +\left(\frac{\rho_f}{2} + \lambda_{k-1}\frac{\rho_g}{2}\right)\|x_{k-1} - x_{k-2}\|^2+\left(\frac{\rho_f-\mu_f}{2} + \lambda_{k-1}\frac{\rho_g-\mu_g}{2} \right) \|x_t - x_{k-1}\|^2
    }{
    \frac{\rho_g}{2}\|x_{k-1} - x_{k-2}\|^2 + \frac{\rho_g}{2}\|x_t - x_{k-1}\|^2 
    }.
    \end{align}
     The above holds when 
    \begin{align}
   \check \lambda_k - \lambda_{k-1} & \geq \frac{2\eta_{k-1}}{\rho_g \|x_{k-1} - x_{k-2}\|^2} + \frac{\rho_f}{\rho_g} + \left(1 - \frac{\mu_g}{\rho_g}\right)\lambda_{k-1}.
    \end{align}
    Then, choosing $\check \lambda_{k} = (2-\frac{\mu_g}{\rho_g})\lambda_{k-1} + \frac{\rho_f}{\rho_g} + \frac{\eta_{k-1}}{-g(x_{k-1})}$, 
    we imply $g(x_t)\leq 0$ for all $\{x_t\}$ such that $\mathcal K_{x_{k-1}}(x_t, \check\lambda_{k}) \leq \mathcal K_{x_{k-1}}( x_{k-1},\check\lambda_{k}).$ 
    \end{proof}
    
\begin{lemma}
    If $\check \lambda_{k}$ is chosen such that $\check \lambda_{k}\geq \frac{\eta_{k-1}}{-g(x_{k-1})} + \frac{\check \eta_{k}}{-g(x_{k-1})}+\frac{\rho_f}{\rho_g} + 2\lambda_{k-1} $, and $\mu_f\geq \rho_f/2$, and $\mu_g\geq \rho_g/2,$ then for the $k$-th iterate, 
    any $\eta_k$-approximate minimizer
    $\check x_{k} \approx \arg\min_{x\in\R^d} \mathcal K_{x_{k-1}}(x, \check\lambda_{k})$ is a strictly feasible point subject to $g(x) + \frac{\rho_g}{2}\|x - x_{k-1}\|^2.$ 
\end{lemma}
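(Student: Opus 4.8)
The plan is to re-run the argument of the preceding lemma (the one bounding $\check\lambda_k$ so that a \emph{descent} method on $\mathcal K_{x_{k-1}}(\cdot,\check\lambda_k)$ keeps all iterates feasible) with the test point taken to be $\check x_k$, making two changes: replace the exact descent inequality $\mathcal K_{x_{k-1}}(\check x_k,\check\lambda_k)\le\mathcal K_{x_{k-1}}(x_{k-1},\check\lambda_k)$ by the $\check\eta_k$-approximate optimality of $\check x_k$, and keep the regularizer $\tfrac{\rho_g}{2}\|\check x_k-x_{k-1}\|^2$ attached to the constraint throughout, so as to reach feasibility for $g(\cdot)+\tfrac{\rho_g}{2}\|\cdot-x_{k-1}\|^2$ rather than only for $g(\cdot)$.

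First I would note that, writing $\hat x_k:=\arg\min_x\mathcal K_{x_{k-1}}(x,\check\lambda_k)$, optimality of $\hat x_k$ together with $\check\eta_k$-approximate optimality of $\check x_k$ gives $\mathcal K_{x_{k-1}}(\check x_k,\check\lambda_k)\le\mathcal K_{x_{k-1}}(\hat x_k,\check\lambda_k)+\check\eta_k\le\mathcal K_{x_{k-1}}(x_{k-1},\check\lambda_k)+\check\eta_k$. Expanding the Lagrangian and isolating the regularized-constraint term yields
\begin{align*}
\check\lambda_k\Big(g(\check x_k)+\tfrac{\rho_g}{2}\|\check x_k-x_{k-1}\|^2\Big)\le f(x_{k-1})-f(\check x_k)-\tfrac{\rho_f}{2}\|\check x_k-x_{k-1}\|^2+\check\lambda_k\,g(x_{k-1})+\check\eta_k.
\end{align*}
Then I would bound the first three terms on the right by the inequality \eqref{eq:fs} established in the proof of the previous lemma — it holds for an arbitrary point and uses only the $\eta_{k-1}$-approximate KKT conditions of $x_{k-1}$ for $\mathcal P_{x_{k-2}}$ and the $\mu_{k-1}:=\mu_f+\lambda_{k-1}\mu_g$-strong convexity of $\mathcal K_{x_{k-2}}(\cdot,\lambda_{k-1})$ — instantiated at $x_t=\check x_k$, and cancel the common term $\lambda_{k-1}\big(g(\check x_k)+\tfrac{\rho_g}{2}\|\check x_k-x_{k-1}\|^2\big)$ from both sides. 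Since the stated hypothesis forces $\check\lambda_k>\lambda_{k-1}$, dividing by $\check\lambda_k-\lambda_{k-1}$ leaves
\begin{align*}
g(\check x_k)+\tfrac{\rho_g}{2}\|\check x_k-x_{k-1}\|^2\le g(x_{k-1})+\frac{\eta_{k-1}+\check\eta_k+\big(\tfrac{\rho_f}{2}+\lambda_{k-1}\tfrac{\rho_g}{2}\big)\,(1)-\tfrac{\mu_{k-1}}{2}\|\check x_k-x_{k-1}\|^2}{\check\lambda_k-\lambda_{k-1}},
\end{align*}
where $(1)=\|\check x_k-x_{k-2}\|^2-\|\check x_k-x_{k-1}\|^2-\|x_{k-1}-x_{k-2}\|^2=2\langle\check x_k-x_{k-1},\,x_{k-1}-x_{k-2}\rangle$, exactly as in that proof.

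The closing step is a matter of constants. I would bound $(1)$ by Young's inequality with a weight chosen so that the resulting $\|\check x_k-x_{k-1}\|^2$ term is dominated by $-\tfrac{\mu_{k-1}}{2}\|\check x_k-x_{k-1}\|^2$ — this is precisely where the regularizer conditions $\mu_f\ge\rho_f/2$ and $\mu_g\ge\rho_g/2$ (equivalently $\mu_{k-1}\ge\tfrac12(\rho_f+\lambda_{k-1}\rho_g)$) enter — so that the numerator collapses to something of the form $\eta_{k-1}+\check\eta_k+c\,(\rho_f+\lambda_{k-1}\rho_g)\,\|x_{k-1}-x_{k-2}\|^2$ for an absolute constant $c$. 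Combining this with $g(x_{k-1})<0$ and the inductively guaranteed feasibility $-g(x_{k-1})\ge\tfrac{\rho_g}{2}\|x_{k-1}-x_{k-2}\|^2$ (strict, since $x_{k-1}\neq x_{k-2}$ — otherwise the algorithm would already have stopped at iteration $k-1$), the target strict inequality $g(\check x_k)+\tfrac{\rho_g}{2}\|\check x_k-x_{k-1}\|^2<0$ reduces to a lower bound on $\check\lambda_k-\lambda_{k-1}$, and using $-g(x_{k-1})\ge\tfrac{\rho_g}{2}\|x_{k-1}-x_{k-2}\|^2$ a second time to turn the $\|x_{k-1}-x_{k-2}\|^2$ term into $\tfrac{\rho_f}{\rho_g}+\lambda_{k-1}$ yields exactly the hypothesis $\check\lambda_k\ge\tfrac{\eta_{k-1}}{-g(x_{k-1})}+\tfrac{\check\eta_k}{-g(x_{k-1})}+\tfrac{\rho_f}{\rho_g}+2\lambda_{k-1}$; strictness of the conclusion is inherited from $-g(x_{k-1})>0$. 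The main obstacle I anticipate is exactly this last round of constant-chasing: getting the Young weight and the two applications of $-g(x_{k-1})\ge\tfrac{\rho_g}{2}\|x_{k-1}-x_{k-2}\|^2$ to line up so the coefficients collapse to $\tfrac{\rho_f}{\rho_g}$ and $2\lambda_{k-1}$ rather than something larger; conceptually nothing new beyond the previous lemma is needed.
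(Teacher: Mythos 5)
Your overall route is the same as the paper's: bound $\mathcal K_{x_{k-1}}(\check x_k,\check\lambda_k)$ by $\mathcal K_{x_{k-1}}(x_{k-1},\check\lambda_k)$ plus an error, isolate the regularized constraint value $g(\check x_k)+\tfrac{\rho_g}{2}\|\check x_k-x_{k-1}\|^2$, control the remaining terms via inequality (\ref{eq:fs}) instantiated at $\check x_k$, absorb the cross term $2\langle \check x_k-x_{k-1},\,x_{k-1}-x_{k-2}\rangle$ by Young's inequality, and finish with $-g(x_{k-1})\ge\tfrac{\rho_g}{2}\|x_{k-1}-x_{k-2}\|^2$. Structurally this matches the paper's proof step for step.

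There is, however, one concrete gap, and it sits exactly where you flag your anticipated obstacle. By replacing the strong-convexity/descent inequality at step $k$ with bare $\check\eta_k$-approximate optimality, the only negative quadratic left to absorb the Young remainder is the $-\tfrac{\mu_{k-1}}{2}\|\check x_k-x_{k-1}\|^2$ inherited from (\ref{eq:fs}). Writing $c:=\tfrac{\rho_f}{2}+\lambda_{k-1}\tfrac{\rho_g}{2}$, $a:=\check x_k-x_{k-1}$, $b:=x_{k-1}-x_{k-2}$, the cross term is $2c\langle a,b\rangle\le c\tau\|a\|^2+\tfrac{c}{\tau}\|b\|^2$; absorbing $c\tau\|a\|^2$ into $-\tfrac{\mu_{k-1}}{2}\|a\|^2$ under the stated hypothesis (which only gives $\mu_{k-1}\ge c$) forces $\tau\le 1/2$, hence a $\|b\|^2$ coefficient of at least $2c=\rho_f+\lambda_{k-1}\rho_g$. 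After dividing by $-g(x_{k-1})\ge\tfrac{\rho_g}{2}\|b\|^2$ this contributes $\tfrac{2\rho_f}{\rho_g}+2\lambda_{k-1}$, and together with the cancelled $\lambda_{k-1}$ the requirement becomes $\check\lambda_k\ge 3\lambda_{k-1}+\tfrac{2\rho_f}{\rho_g}+\cdots$, strictly stronger than the stated $2\lambda_{k-1}+\tfrac{\rho_f}{\rho_g}+\cdots$. The paper closes the constants by \emph{additionally} invoking strong convexity of $\mathcal K_{x_{k-1}}(\cdot,\check\lambda_k)$ around its minimizer, which supplies a second negative quadratic $-\tfrac{\hat\mu_k}{2}\|a\|^2$ with $\hat\mu_k=\mu_f+\check\lambda_k\mu_g\ge\mu_{k-1}$; the combined $-\mu_{k-1}\|a\|^2$ then permits $\tau=1$, the $\|b\|^2$ coefficient stays at $c$, and the bound collapses to exactly $\tfrac{\rho_f}{\rho_g}+2\lambda_{k-1}$. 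The fix is therefore to retain the quadratic term you discarded in your first step, rather than to hunt for a cleverer Young weight.
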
  
\begin{proof}
Recall first that from strong concavity of $\mathcal K_{k-1}(x,\check \lambda_{k})$ we get 
$$\frac{\hat \mu_{k}}{2}\|\check x_{k} - x_{k-1}\|^2 \leq \mathcal K_{k-1}(x_{k-1},\check \lambda_{k}) - \mathcal K_{k-1}(\check x_{k},\check \lambda_{k}) + \eta_k $$
    with $\hat \mu_k  := \mu_f + \check \lambda_k \mu_g $
 i.e., similarly to \cref{eq:K_descent} we have:
    \begin{align*}
       f(\check x_{k}) &+ \frac{\rho_f}{2}\|\check x_{k} - x_{k-1}\|^2 
       + \check \lambda_k (g(\check x_{k}) + \frac{\rho_g}{2}\|\check x_{k} - x_{k-1}\|^2 ) + \frac{\hat \mu_{k}}{2}\|\check x_{k} - x_{k-1}\|^2 \\
       &\leq f(x_{k-1}) + \check \lambda_k g( x_{k-1}) + \eta_k .\end{align*}
       Then, 
       \begin{align}\label{eq:middle_g}
         & \check \lambda_k (g(\check x_{k}) + \frac{\rho_g}{2}\|\check x_{k} - x_{k-1}\|^2 )\nonumber\\
         & \leq f(x_{k-1})  - f(\check x_{k}) - \frac{\rho_f}{2}\|\check x_{k} - x_{k-1}\|^2 + \check \lambda_k(g(x_{k-1})) - \frac{\hat \mu_k}{2}\|\check x_{k} - x_{k-1}\|^2 + \eta_k\nonumber\\
         & \leq \lambda_{k-1} (-  g(x_{k-1}) + \frac{\rho_g}{2}\|\check x_{k} - x_{k-1}\|^2) + \left(\frac{\rho_f}{2} + \lambda_{k-1}\frac{\rho_g}{2}\right)(1) - \frac{ \mu_{k-1}}{2}\|\check x_{k} - x_{k-1}\|^2\nonumber\\
         &+  \lambda_{k-1} g(\check x_{k})+\eta_{k-1}  -  \check \lambda_k(-g(x_{k-1}) ) + \eta_k - \frac{\hat \mu_k}{2}\|\check x_{k} - x_{k-1}\|^2.
         \end{align}
         where in the second inequality we used Eq. (\ref{eq:fs}),  and $(1) := \|x_t - x_{k-2}\|^2  -\|x_t - x_{k-2}\|^2  - \|x_{k-1} - x_{k-2}\|^2$.
Note that if $\mu_f\geq \rho_f/2$ and $\mu_g\geq \rho_g/2,$ then 
\begin{align}\label{eq:norms}
  & \left(\frac{\rho_f}{2} + \lambda_{k-1}\frac{\rho_g}{2}\right) ( \|x_t - x_{k-2}\|^2  -\|x_t - x_{k-2}\|^2  - \|x_{k-1} - x_{k-2}\|^2) - \frac{\mu_{k-1}}{2}\|x_t - x_{k-1}\|^2  - \frac{\hat \mu_{k}}{2}\|x_t - x_{k-1}\|^2 \nonumber \\
  & \leq   \left(\frac{\rho_f}{2} + \lambda_{k-1}\frac{\rho_g}{2}\right) ( \|x_t - x_{k-2}\|^2  -\|x_t - x_{k-2}\|^2  - \|x_{k-1} - x_{k-1}\|^2) - \mu_{k-1}\|x_t - x_{k-1}\|^2 \nonumber \\
  & \leq   \left(\frac{\rho_f}{2} + \lambda_{k-1}\frac{\rho_g}{2}\right) ( \|x_t - x_{k-2}\|^2  -2\|x_t - x_{k-1}\|^2  - \|x_{k-1} - x_{k-2}\|^2) \nonumber \\
  & \leq \left(\frac{\rho_f}{2} + \lambda_{k-1}\frac{\rho_g}{2}\right)\|x_{k-1} - x_{k-2}\|^2
\end{align}
         
Then combining \Cref{eq:norms} with \Cref{eq:middle_g} we have:
\begin{align}
    g(\check x_{k})+ \frac{\rho_g}{2}\|\hat x_k - x_{k-1}\|^2 & \leq \frac{\eta_{k-1} + \eta_k+\left(\frac{\rho_f}{2} + \lambda_{k-1}\frac{\rho_g}{2}\right)\|x_{k-1} - x_{k-2}\|^2}{(\check \lambda_k -  \lambda_{k-1})} + g(x_{k-1}) .
\end{align}
We require it to be strictly negative, that is:
\begin{align*}
    & \frac{\eta_{k-1}+ \eta_k +\left(\frac{\rho_f}{2} + \lambda_{k-1}\frac{\rho_g}{2}\right)\|x_{k-1} - x_{k-2}\|^2}{(\check \lambda_k -  \lambda_{k-1})} + g(x_{k-1}) \leq 0.\\
    & \check \lambda_k -  \lambda_{k-1} \geq \frac{\eta_{k-1} +\left(\frac{\rho_f}{2} + \lambda_{k-1}\frac{\rho_g}{2}\right)\|x_{k-1} - x_{k-2}\|^2}{-g(x_{k-1})}. 
\end{align*}
Similarly as before, note that $-g(x_{k-1})\geq \frac{\rho_g}{2}\|x_{k-1} - x_{k-2}\|^2$. 
Therefore, we require 
$$\check \lambda_{k} \geq 2\lambda_{k-1} + \frac{\eta_{k-1}+ \eta_k}{-g(x_{k-1})} + \frac{\rho_f}{\rho_g}.$$
\end{proof}


Then, we can conclude that all the iterates of Algorithm \ref{alg:non-convex} are feasible with high probability.
   Indeed,  safety of the internal updates follows from the safety of SCSA (Alg. \ref{alg:base}). The safety of the initialization runs follows from the above lemmas.
\end{document}